\newcommand{\R}{{\mathbb{R}}}
\newcommand{\Z}{{\mathbb{Z}}}
\newcommand{\PP}{\mathbb{P}}
\newcommand{\TT}{\hat{\tau}}
\newcommand{\E}{\mathbb{E}}
\renewcommand{\P}{\mathbb{P}}
\newtheorem{theorem}{Theorem}
\newtheorem{corollary}{Corollary}
\newtheorem{lemma}{Lemma}
\begin{document}
\begin{frontmatter}

\title{Limit theorems for 2D invasion percolation}
\runtitle{2D invasion percolation}

\begin{aug}
\author[A]{\fnms{Michael} \snm{Damron}\corref{}\thanksref{t1}\ead[label=e1]{mdamron@princeton.edu}} and
\author[B]{\fnms{Art\"{e}m} \snm{Sapozhnikov}\thanksref{t2}\ead[label=e2]{artem.sapozhnikov@math.ethz.ch}}
\runauthor{M. Damron and A. Sapozhnikov}
\affiliation{Princeton University and ETH Z\"{u}rich}
\address[A]{Princeton University\\
Fine Hall, Washington Rd.\\
Princeton, New Jersey 08544\\
USA\\
\printead{e1}}
\address[B]{ETH Z\"{u}rich\\
R\"{a}mistrasse 101\\
8092 Z\"{u}rich\\
Switzerland\\
\printead{e2}}
\end{aug}

\thankstext{t1}{Supported by an NSF postdoctoral fellowship.}
\thankstext{t2}{Supported in part by the Excellence Fund Grant of TU/e
of Remco van der Hofstad.}

\received{\smonth{9} \syear{2010}}

%
\begin{abstract}
We prove limit theorems and variance estimates for quantities related
to ponds and outlets for 2D invasion percolation. We first exhibit
several properties of a sequence $({\mathbf O}(n))$ of outlet
variables, the $n$th of which gives the number of outlets in the box
centered at the origin of side length $2^n$. The most important of
these properties describes the sequence's renewal structure and
exponentially fast mixing behavior. We use these to prove a central
limit theorem and strong law of large numbers for $({\mathbf O}(n))$.
We then show consequences of these limit theorems for the pond radii
and outlet weights.
\end{abstract}

%
\begin{keyword}[class=AMS]
\kwd{60K35}
\kwd{82B43}.
\end{keyword}
\begin{keyword}
\kwd{Invasion percolation}
\kwd{invasion ponds}
\kwd{critical percolation}
\kwd{near critical percolation}
\kwd{correlation length}
\kwd{scaling relations}
\kwd{central limit theorem}.
\end{keyword}

\end{frontmatter}

\section{Introduction}\label{sec:introduction}

\subsection{The model}\label{subsec:model}

Invasion percolation is a stochastic growth model both introduced and
numerically studied independently by \cite{Chandler} and \cite{Lenormand}.
Let $G = (V,E)$ be an infinite connected graph in which a distinguished
vertex, the origin, is chosen.
Let $(\tau_e)_{e\in E}$ be independent random variables, uniformly
distributed on $[0,1]$. The
\textit{invasion percolation cluster} (IPC) of the origin on $G$ is
defined as
the limit of an increasing sequence $(G_n)$ of connected subgraphs of
$G$ as follows.
For an arbitrary subgraph $G' = (V',E')$ of $G$, we define the outer
edge boundary of $G'$ as
\[
\Delta G'=\{e=\langle x,y\rangle\in E\dvtx e\notin
E'\mbox{, but }x\in V'\mbox{ or }y\in V'\}.
\]
We define $G_0$ to be the origin.
Once the graph $G_i=(V_i,E_i)$ is defined, we select the edge $e_{i+1}$
that minimizes $\tau$ on $\Delta G_i$.
We take $E_{i+1}=E_i\cup\{e_{i+1}\}$ and let $G_{i+1}$ be the graph
induced by the edge set $E_{i+1}$.
The graph $G_i$ is called the \textit{invaded region} at time
$i$.\vadjust{\goodbreak}
Let $E_\infty= \bigcup_{i=0}^\infty E_i$ and $V_\infty= \bigcup_{i=0}^\infty V_i$. Finally, define the IPC
\[
\mathcal{S} = (V_\infty, E_\infty).
\]

We study invasion percolation on two-dimensional lattices; however, for
simplicity \textit{we restrict ourselves hereafter to the square lattice}
${\Z}^2$ and denote by ${\E}^2$ the set of nearest-neighbour edges.
The results of this paper still hold for lattices which are invariant
under reflection in one of the coordinate axes and under rotation
around the origin by some angle. In particular, this includes the
triangular and honeycomb lattices.

We define Bernoulli percolation using the random variables $\tau_e$ to
make a coupling with the invasion immediate.
For any $p\in[0,1]$ we say that an edge $e\in{\mathbb E}^2$ is
$p$-\textit{open} if $\tau_e< p$ and $p$-\textit{closed} otherwise.
It is obvious that the resulting random graph of $p$-open edges has the
same distribution as
the one obtained by declaring each edge of ${\mathbb E}^2$ open with
probability $p$ and closed with probability $1-p$,
independently of the state of all other edges.
The percolation probability $\theta(p)$ is the probability that the
origin is in the infinite cluster of $p$-open edges.
There is a critical probability $p_c = \inf\{p\dvtx \theta(p)>0\}\in(0,1)$.
For general background on Bernoulli percolation we refer the reader to
\cite{Grimmett}.

In \cite{Newman}, it is shown that, for any $p>p_c$, the invasion on
$(\Z^d,\E^d)$ intersects the infinite $p$-open cluster with
probability one.
In the case $d=2$ this immediately follows from the
Russo--Seymour--Welsh theorem (see Section 11.7 in \cite{Grimmett}).
This result has been extended to much more general graphs in \cite{HPS}.
Furthermore, the definition of the invasion mechanism implies that if
the invasion reaches the $p$-open infinite cluster for some $p$,
it will never leave this cluster.
Combining these facts yields that if $e_i$ is the edge added at step~%
$i$, then $\limsup_{i\to\infty}\tau_{e_i}=p_c$.
It is well known that for Bernoulli percolation on $(\Z^2,\E^2)$, the
percolation probability at $p_c$ is $0$.
This implies that, for infinitely many values of $i$, the weight $\tau
_{e_i}$ satisfies $\tau_{e_i}>p_c$.
The last two results give that $\TT_1=\max\{\tau_e\dvtx e\in E_\infty\}
$ exists and is greater than~$p_c$.
The above maximum is attained at an edge which we shall call $\hat e_1$.
Suppose that~$\hat{e}_1$ is invaded at step $i_1$, that is, $\hat e_1
= e_{i_1}$.
Following the terminology of~\cite{Newman-Stein}, we call the graph
$G_{i_1-1}$ the \textit{first pond} of the invasion, denoting it by the
symbol~$\hat V_1$, and we call the edge $\hat{e}_1$ the \textit{first outlet}.
The second pond of the invasion is defined similarly.
Note that a simple extension of the above argument implies that $\TT
_2=\max\{\tau_{e_i}\dvtx e_i\in E_\infty,i>i_1\}$ exists and is greater
than~$p_c$.
If we assume that $\TT_2$ is taken on the edge $\hat{e}_2$ at step $i_2$,
we call the graph $G_{i_2-1}\setminus G_{i_1-1}$ the \textit{second pond}
of the invasion, and we denote it $\hat V_2$.
The edge $\hat e_2$ is called the \textit{second outlet}.
The further ponds $\hat V_k$ and outlets $\hat e_k$ are defined analogously.
For a hydrological interpretation of the ponds we refer the reader to
\cite{BJV}.

In this paper, we consider a sequence of outlet variables introduced in~\cite{DS}.
We continue the analysis from that paper, in which almost
sure bounds were shown for the sequence's growth rate. Here, we prove
limit theorems for the sequence and, as a consequence, we obtain
variance estimates for the sequence $(\hat\tau_k)$ of outlet weights
and for the sequence of pond radii. The current results were inspired
by limit theorems for critical percolation obtained by Kesten and Zhang
in \cite{kesten-zhang} and later by Zhang in \cite{Zhang}. In those
papers, the authors prove central limit theorems for (a) the maximal
number of disjoint open circuits around the origin in the box of size
$n$ centered at the origin in critical percolation in two dimensions
and (b) the number of open clusters in the same box in any dimension in
percolation with parameter $p \in[0,1]$. The martingale methods they
use apply to some degree for our questions of invasion percolation, but
our techniques, based on mixing properties and moment bounds from \cite
{DS}, seem to reveal more of the underlying structure of the process.

The mixing properties mentioned above are consequences of a more
general renewal mechanism that lies inside the invasion process on $\Z
^2$. In Section~\ref{sec:outlets}, we show that for any $m,k \geq1$,
the invaded regions at distances $2^m$ and~$2^{m+k}$ from the origin
are equal to two statistically independent sets except on an event
whose probability decays exponentially in $k$. Roughly speaking, this
means that the invasion has a very weak dependence structure when
viewed on exponential length scales.

Last we would like to mention that limit theorems similar to ones we
establish in this paper were shown by Goodman \cite{Goodman} for
invasion percolation on the regular tree. Those results were also
inspiration for the current work. Goodman showed, for example, that the
sizes of the ponds grow exponentially, with laws of large numbers,
central limit theorems and large deviation results. His analysis is
based on representing $\mathcal{S}$ in terms of the outlets weights
$\hat\tau_n$, as in \cite{AGdHS}.

\subsection{Notation}\label{subsec:Notation}

In this section we collect most of the notation and the definitions
used in the paper.

For $a\in\R$, we write $|a|$ for the absolute value of $a$, and,
for a site $x = (x_1$, $x_2)\in\Z^2$, we write $|x|$ for $\max(|x_1|,|x_2|)$.
For $n>0$ and $x\in\Z^2$,
let $B(x,n) = \{y\in\Z^2\dvtx |y-x|\leq n\}$ and $\partial B(x,n) = \{
y\in\Z^2\dvtx |y-x|=n\}$.
We write $B(n)$ for $B(0,n)$ and $\partial B(n)$ for $\partial B(0,n)$.
For $m<n$ and $x\in\Z^2$, we define the annulus $\operatorname{Ann}(x;m,n) =
B(x,n)\setminus B(x,m)$.
We write $\operatorname{Ann}(m,n)$ for $\operatorname{Ann}(0;m,n)$.

We consider the square lattice $(\Z^2,{\mathbb E}^2)$, where ${\mathbb
E}^2 = \{\langle x,y \rangle\in\Z^2\times\Z^2\dvtx |x-y|=1\}$.
Let $(\Z^2)^* = (1/2,1/2) + \Z^2$ and $({\mathbb E}^2)^* = (1/2,1/2)
+ {\mathbb E}^2$ be the vertices and the edges of the dual lattice.
For $x\in\Z^2$, we write $x^*$ for $x + (1/2,1/2)$.
For an edge $e\in{\mathbb E}^2$ we denote its endpoints (left,
resp., right or bottom, resp., top) by $e_x, e_y\in\Z^2$.
The edge $e^* = \langle e_x + (1/2,1/2),e_y-(1/2,1/2) \rangle$ is
called the \textit{dual edge} to $e$.
Its endpoints (bottom, resp., top or left, resp., right) are
denoted by $e_x^*$ and $e_y^*$.
Note that $e_x^*$ and $e_y^*$ are not the same as $(e_x)^*$ and $(e_y)^*$.
For a subset ${\mathcal K}\subset\Z^2$, let ${\mathcal K}^* =
(1/2,1/2) + {\mathcal K}$.
We say that an edge $e\in{\mathbb E}^2$ is in ${\mathcal K}\subset\Z
^2$ if both its endpoints are in ${\mathcal K}$. For any graph
$\mathcal{G}$ we write $|\mathcal{G}|$ for the number of vertices
in~$\mathcal{G}$.

Let $(\tau_e)_{e\in{\mathbb E}^2}$ be independent random variables,
uniformly distributed on $[0,1]$, indexed by edges.
We call $\tau_e$ the \textit{weight} of an edge $e$.
We define the weight of an edge $e^*$ as $\tau_{e^*} = \tau_e$.
We denote the underlying probability measure by ${\mathbb P}$ and the
space of configurations by $([0,1]^{{\mathbb E}^2},{\mathcal F})$,
where ${\mathcal F}$ is the natural $\sigma$-field on $[0,1]^{{\mathbb E}^2}$.
We say that an edge $e$ is $p$-\textit{open} if $\tau_e <p$ and $p$-\textit{closed} if $\tau_e > p$.
An edge $e^*$ is $p$-open if~$e$ is $p$-open, and it is $p$-closed if~$e$ is $p$-closed.
The event that two sets of sites ${\mathcal K}_1,{\mathcal K}_2\subset
\Z^2$ are connected by a~$p$-open path is denoted by
${\mathcal K}_1 \stackrel{p}\longleftrightarrow{\mathcal K}_2$.

For any $k \geq1$, let $\hat R_k$ be the radius of the union of the
first $k$ ponds. In other words,
\[
\hat R_k = \max\Biggl\{|x|\dvtx  x \in\bigcup_{j=1}^k \hat V_k\Biggr\}.
\]
For two functions $g$ and $h$ from a set ${\mathcal X}$ to ${\mathbb R}$,
we write $g(z) \asymp h(z)$ to indicate that $g(z)/h(z)$ is bounded
away from $0$ and $\infty$, uniformly in $z\in{\mathcal X}$. We will
also use the standard notation $g(z) = O(h(z))$ if $g(z)/h(z)$ is
bounded away from $\infty$ uniformly in $z\in{\mathcal X}$, and
$g(z)=o(h(z))$ if for each $\varepsilon>0$, $|g(z)/h(z)|>\varepsilon$
for only a finite number of values of $z\in{\mathcal X}$. For any
event $A$, we write $I(A)$ for the indicator function of $A$. For any
sequence of random variables $(X_i)$ and any $k \geq0$, we say that
the sequence is $k$-\textit{dependent} if for every $m \geq1$, the set of
variables $\{X_1, \ldots, X_m\}$ is independent of the set of
variables $\{X_{m+k+1}, \ldots\}$. Similarly we say that the sequence
of events $(A_i)$ is $m$-dependent if the sequence of variables
$(I[A_i])$ is. Throughout this paper we write $\log$ for $\log_2$.
All the constants $(C_i)$ in the proofs are strictly positive and finite.
Their exact values may be different from proof to proof.\vspace*{-2pt}

\subsection{Main results}\label{subsec:mainresults}\vspace*{-2pt}

\subsubsection{The CLT for outlets}

Let $O_k$ be the number of outlets in the annulus $\operatorname{Ann}(2^{k-1},2^{k})$
and $a_k = \E O_k$. Let ${\mathbf O}(n) = \sum_{k=1}^n O_k, a(n) =
\E{\mathbf O}(n)$ and $b(n)^2 = \operatorname{Var} {\mathbf O}(n)$.
\begin{theorem}\label{thm:VarOn}
There exist positive and finite constants $c_1$ and $c_2$ such that for
all $i$, $a_i\in[c_1,c_2]$, and the variance of ${\mathbf O}(n)$ satisfies
\[
b(n)^2 \asymp n.
\]
\end{theorem}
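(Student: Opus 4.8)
The plan is to obtain both the upper and lower bounds on $b(n)^2$ by exploiting the renewal/mixing structure of the sequence $(O_k)$ that is advertised in the introduction. First I would establish the one-sided moment statements on the $a_k$. The upper bound $a_k \le c_2$ should follow from the moment bounds of \cite{DS}: the expected number of outlets in a dyadic annulus is controlled by summing, over scales inside the annulus, the probability that a given scale carries an outlet, and each such probability is bounded by a four-arm-type event whose probability is summable to an $O(1)$ total because of the (near-)critical arm exponents. The lower bound $a_k \ge c_1$ is a soft consequence of the fact that the invasion must create \emph{at least one} outlet at a scale comparable to $2^k$ with probability bounded below uniformly in $k$ — this is a Russo–Seymour–Welsh / box-crossing argument applied to the pond structure, and it transfers to $\E O_k$ since $O_k \ge I(\text{there is an outlet in }Ann(2^{k-1},2^k))$. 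Together these give $a_i \in [c_1,c_2]$, hence $a(n) = \sum_{k\le n} a_k \asymp n$.

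Next, for the variance, write $b(n)^2 = \sum_{j,k \le n} \mathrm{Cov}(O_j,O_k)$. The key input is the exponential mixing statement from Section~\ref{sec:outlets}: the invaded region seen at scale $2^{j}$ and at scale $2^{j+k}$ agree with independent copies off an event of probability $\le C e^{-c k}$ (more precisely, there is an approximately $m$-dependent structure after coarse-graining). Combined with the uniform moment bounds on $O_k$ (I would need a uniform bound on, say, $\E O_k^{2+\delta}$, again from \cite{DS}-type estimates, to upgrade an ``agree off a small event'' statement into a covariance bound), this yields $|\mathrm{Cov}(O_j,O_k)| \le C e^{-c|j-k|}$. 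Summing the geometric series gives the upper bound $b(n)^2 \le Cn$.

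For the matching lower bound $b(n)^2 \ge cn$ I would use the renewal structure directly rather than the covariance sum. The idea is that the sequence $(O_k)$ contains, along a positive-density subsequence of scales determined by the renewal times of the invasion, blocks that are \emph{genuinely independent} of one another, and each such block has variance bounded below by a constant (the block variable is non-degenerate: conditioned on the environment outside the block, there is positive probability of $0$ outlets and positive probability of $\ge 1$ outlet in the block). Decomposing ${\mathbf O}(n)$ along these independent blocks and using $\mathrm{Var}\big(\sum X_i\big) = \sum \mathrm{Var}(X_i)$ for independent $X_i$ — after conditioning out the renewal locations and using that there are $\asymp n$ of them with high probability — produces $b(n)^2 \ge cn$. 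A clean way to package this is: condition on the renewal structure, apply the conditional variance formula $\mathrm{Var}({\mathbf O}(n)) \ge \E[\mathrm{Var}({\mathbf O}(n)\mid \text{renewals})]$, and bound the inner conditional variance from below by a sum over the (order $n$) independent inter-renewal contributions.

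The main obstacle I expect is the lower bound, specifically making the ``independent non-degenerate blocks'' argument rigorous: one must verify both that the renewal times occur with positive density on the exponential scale (which should be in \cite{DS} or follow from the renewal construction in Section~\ref{sec:outlets}) and that the per-block outlet count is uniformly non-degenerate, which requires an RSW-type construction showing that, inside a renewal block and conditionally on its boundary data, one can force an extra outlet or suppress one with probability bounded away from $0$. The upper bound is comparatively routine once the exponential mixing and uniform higher-moment bounds are in hand.
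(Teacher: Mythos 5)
Your treatment of $a_k$ (upper bound from the moment bound of \cite{DS}, lower bound from an RSW construction forcing at least one outlet in a dyadic annulus) and your upper bound $b(n)^2 \le Cn$ (exponential covariance decay from the mixing estimates, summed over $j,k$) both match the paper's proof in substance, even though the paper packages the covariance summation through a general moment inequality (Proposition~2.2 of Withers). That part of the proposal is fine.

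The gap is in the variance lower bound, and you identify the right place to worry but underestimate how much work is hidden in ``condition on the renewal structure and bound the inner conditional variance.'' There is a real obstruction to the conditional variance formula as you propose to use it: the outlet count in a block is \emph{not} a function of the block's own configuration, because whether an edge is an outlet depends on the global invasion, which threads through the block. So ``conditioned on the environment outside the block, the block contributes variance $\ge c$'' is not a statement you can extract softly from renewal; conditioning on the outside environment pins down far too much of what happens inside, and conditioning on the renewal times alone does not make the blocks exactly independent (Theorem~\ref{thm:renewal} is a coupling statement with exponentially small error, not an exact factorization, and it is not at all automatic that the small error events don't wash out a constant-per-block variance contribution). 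The paper circumvents this with a concrete construction: it builds events $A_{n,k}$ that force a three-edge structure $e_1,e_2,e_3$ at scale $2^k$, arranged so that (i) $e_1,e_3$ are outlets no matter what, and (ii) $e_2$ is an outlet if and only if its weight falls in the right half $[q_k(3),q_k(4)]$ of a symmetric two-interval set on which $\tau_{e_2}$ is conditionally uniform. Conditioning on \emph{everything except the weights of the designated middle edges} (plus the event that this structure occurs at $\asymp n$ scales, and the high-probability ``connected to infinity'' refinement $\tilde A_{n,k}$), the count of outlets among those middle edges is \emph{exactly} $\mathrm{Binomial}(\lfloor C_3 n\rfloor,1/2)$, independent of the number of other outlets. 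Anti-concentration of the Binomial then gives $\P(|{\mathbf O}(n)-a(n)| \ge \sqrt n) \ge c$ uniformly in $n$. The key idea you are missing is this explicit ``spare randomness'' device — a family of edges whose weights are conditionally free over symmetric intervals so that outlet-or-not becomes a fair coin for each — rather than a soft appeal to renewals. Without it, the conditional-variance plan does not close.
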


Write $N(0,1)$ for the distribution of a standard normal random
variable, and let $\Rightarrow$ denote convergence in distribution.
\begin{theorem}\label{thm:cltokthm}
The sequence $({\mathbf O}(n))$ satisfies a CLT, that is,
%
%
\begin{equation}\label{eq:CLT}
\frac{{\mathbf O}(n)-a(n)}{b(n)} \Rightarrow N(0,1).\vadjust{\goodbreak}
\end{equation}
Furthermore, if $r > 1/2$, then the following convergence is almost sure:
%
%
\begin{equation}\label{eq:SLLN}
\frac{{\mathbf O}(n) - a(n)}{n^r} \to0.
\end{equation}
\end{theorem}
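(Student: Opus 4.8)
\textbf{Proof plan for Theorem~\ref{thm:cltokthm}.}
The plan is to deduce both statements from the renewal/mixing structure of the outlet sequence, which allows us to approximate $({\mathbf O}(n))$ by a sum of independent (or $k$-dependent) blocks. First I would use the renewal mechanism described in Section~\ref{sec:outlets}: on all but an event of probability decaying exponentially in $k$, the invaded region out to distance $2^{m+k}$ splits into an inner part (depending only on scales $\le 2^m$) and an outer part that is independent of the inner one. This should let me write $O_1, O_2, \ldots$ as a sequence that, after discarding rare ``bad'' scales, behaves like a stationary sequence with exponentially decaying dependence. Concretely, I would fix a slowly growing gap length $\ell = \ell(n)$ (for instance $\ell \asymp \log n$), partition $\{1,\dots,n\}$ into consecutive blocks of length roughly $2\ell$, and within each block separate a ``core'' of length $\ell$ from a ``buffer'' of length $\ell$; the cores become essentially independent up to an error that is summably small by the exponential mixing bound.

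Second, for the CLT~\eqref{eq:CLT} I would invoke a standard CLT for weakly dependent triangular arrays — either a blocking argument à la Bernstein combined with Lindeberg (the $a_i$ are bounded by Theorem~\ref{thm:VarOn}, so one needs only second moments plus the variance lower bound $b(n)^2 \asymp n$), or a $k$-dependent CLT applied after showing that the contribution of the buffer blocks is $o(b(n)) = o(\sqrt n)$ in $L^2$. The boundedness of $a_i = \E O_k \in [c_1,c_2]$ is not literally a uniform bound on $O_k$ itself, so I would first need a moment bound on $O_k$ — ideally an exponential moment or at least a uniform bound on $\E O_k^2$ (or $\E O_k^{2+\delta}$) — which I expect to follow from the same estimates in \cite{DS} that give Theorem~\ref{thm:VarOn}; with that in hand, the Lindeberg condition for the rescaled block sums is routine since each normalized summand is $O(\ell/\sqrt n) = O(\log n/\sqrt n) \to 0$.

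Third, for the strong law~\eqref{eq:SLLN} with $r > 1/2$, I would proceed along a subsequence and interpolate. Along $n_j = j^2$ (or any sequence with $\sum_j n_j^{-(2r-1)} < \infty$ after subtracting... more simply, using $n_j$ growing geometrically or polynomially), Chebyshev gives $\P(|{\mathbf O}(n_j)-a(n_j)| > \varepsilon n_j^r) \le b(n_j)^2/(\varepsilon^2 n_j^{2r}) \asymp n_j^{1-2r}$, and choosing $n_j$ so that $\sum_j n_j^{1-2r} < \infty$ makes Borel--Cantelli applicable, giving a.s.\ convergence along $(n_j)$. To fill the gaps I would control the maximal fluctuation of ${\mathbf O}$ between consecutive $n_j$: since ${\mathbf O}$ is monotone in $n$, it suffices to bound $a(n_{j+1})-a(n_j) = O(n_{j+1}-n_j)$ (linear growth of the mean, again from $a_i \in [c_1,c_2]$) together with a Kolmogorov-type maximal inequality for the centered partial sums over the block $(n_j, n_{j+1}]$ using the $k$-dependence; choosing the $n_j$ spaced so that $n_{j+1}-n_j = o(n_j^r)$ handles the deterministic drift, and a maximal inequality for a sum of $O(n_{j+1}-n_j)$ weakly dependent bounded-variance terms handles the fluctuation part.

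The main obstacle I anticipate is making the blocking argument rigorous in the presence of the ``bad'' events from the renewal construction: the decomposition into independent inner/outer regions holds only off an exponentially small event, so the core blocks are not exactly independent and one must carefully track, on a suitable good event of probability $1 - O(n \cdot 2^{-c\ell})$, that $({\mathbf O}(n))$ agrees with a genuinely $k$-dependent (or independent-block) surrogate whose CLT and SLLN are standard. Choosing $\ell$ large enough that $n\,2^{-c\ell} \to 0$ (and summably so, for the SLLN) while still $\ell = o(\sqrt n)$ is the delicate balance; everything else — Lindeberg verification, Chebyshev/Borel--Cantelli, maximal inequalities — is routine once the moment bounds on $O_k$ from \cite{DS} and Theorem~\ref{thm:VarOn} are invoked.
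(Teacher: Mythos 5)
Your plan follows a route the paper explicitly acknowledges as viable but does not itself take. After Corollary~\ref{cor:mndependent} the authors remark that, using the $m_n$-dependent surrogates $\tilde O_k$ (with $m_n = c\log n$), one could replace the proofs of the CLT and SLLN by arguments that reduce to statements about independent random variables; that is essentially your blocking strategy. What the paper actually does is more streamlined: for the CLT it verifies the hypotheses of Withers's Theorem~2.1 for $l$-mixing sequences (fourth-moment control of block sums from Corollary~\ref{cor:momentmixing}, $l$-mixing via the domination $l(k,u)\le 16\alpha(k)$ together with the exponential decay in Corollary~\ref{cor:mixing}, and summability of covariances via Davydov's inequality in Corollary~\ref{cor:cov}), so no core/buffer bookkeeping is needed; for the SLLN it applies the maximal inequality of Corollary~\ref{cor:momentmixing}(2) along the dyadic subsequence $n_i=2^i$ and concludes by Borel--Cantelli. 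Your worry about uniform moments is already resolved by Theorem~\ref{outletmomentthm}, which gives $\E O_k^t\le (c_1 t)^{3t}$ uniformly in $k$.

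Two concrete issues in your plan. First, for the Bernstein blocking you take cores and buffers of the \emph{same} length $\ell\asymp\log n$. Then there are $\asymp n/(2\ell)$ buffers, each of length $\ell$ contributing variance $\Theta(\ell)$, so the buffer sum has total variance $\Theta(n)$ — not $o(n)$ — and is \emph{not} negligible relative to $b(n)^2\asymp n$. The standard fix is to make buffers much shorter than cores (e.g.\ buffers of length $(\log n)^2$ and cores of length $(\log n)^4$), or to skip the core/buffer split entirely and apply a CLT for triangular arrays of $m_n$-dependent variables directly to the surrogate sequence $\tilde O_k$. Second, for the SLLN you simultaneously want $\sum_j n_j^{1-2r}<\infty$ and $n_{j+1}-n_j = o(n_j^r)$ (to handle the drift via monotonicity); one checks that a polynomial choice $n_j=j^\kappa$ satisfies both only when $\kappa\in(1/(2r-1),\,1/(1-r))$, a nonempty interval only for $r>2/3$, and geometric spacing makes $n_{j+1}-n_j\asymp n_j$, so the monotonicity/drift argument does not reach $r\in(1/2,2/3]$. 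But the maximal inequality route that you also mention handles all $r>1/2$ by itself: with $n_i=2^i$, Corollary~\ref{cor:momentmixing}(2) gives $\P\bigl(\max_{j\le n_{i+1}}|{\mathbf O}(j)-a(j)|\ge\varepsilon n_i^r\bigr)\le C\bigl(n_{i+1}/n_i^{2r}+\sqrt{n_{i+1}}/n_i^{r}\bigr)$, summable over $i$. So drop the monotonicity/drift step; it is both unnecessary and insufficient.
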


\subsubsection{Consequences of the CLT for outlets}
As discussed in Section \ref{subsec:model}, a~main intention of this
paper is to study the asymptotic behavior of the sequences $(\hat R_n)$
(toward infinity) and $(\hat\tau_n)$ (toward $p_c$). In \cite{DS} it
was proved that these sequences obey the following almost sure bounds.
There exist constants $C_1>0$ and $C_2<\infty$ such that with
probability one
\[
C_1n \leq\log\hat R_n \leq C_2n \quad\mbox{and}\quad C_1n \leq-\log
(\hat\tau_n - p_c) \leq C_2n
\]
for all large $n$. Motivated by these results, we want study whether or
not these sequences converge, after properly shifting and normalizing.
Further, we would like know information about rates of convergence. It
turns out that from the point of view of these questions, the sequences
are closely related to certain sequences $(Q_n)$ and $(T_n)$, which we
now define.

Let
\[
Q_n = \min\{k\dvtx {\mathbf O}(k)\geq n\} \quad\mbox{and}\quad T_n = \min\{
k\dvtx a(k)\geq n\}.
\]
Note that ${\mathbf O}(Q_n-1)<n\leq{\mathbf O}(Q_n)$ and
$a(T_n-1)<n\leq a(T_n)$. We define a~sequence of random variables
$(a(Q_n))$, where $a(Q_n)$ equals $a(k)$ if and only if $Q_n=k$. By
this definition, $a(Q_n)$ takes values in the set $\{a(k)\dvtx k \geq1\}$ with
\[
\P\bigl(a(Q_n) = a(k)\bigr) = \P(Q_n = k).
\]
The CLT for outlets allows us to study the sequence $(a(Q_n))$. Let
$\sigma_n^2 = \operatorname{Var}{\mathbf O}(T_n)$.
\begin{theorem}\label{thm:inverseCLT}
\[
\frac{a(Q_n) -n}{\sigma_n} \Rightarrow N(0,1).
\]
[Or, equivalently, $(a(Q_n)-a(T_n))/\sigma_n \Rightarrow N(0,1)$.]
Moreover,
\[
\E\biggl( \frac{a(Q_n)-n}{\sigma_n} \biggr) ^2 \to1\qquad \mbox{as
} n \to\infty.
\]
\end{theorem}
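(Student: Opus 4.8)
The plan is to deduce the limit law by inverting the CLT of Theorem~\ref{thm:cltokthm} along a \emph{deterministic} subsequence, and then to strengthen the convergence to $L^2$ via a tail estimate and uniform integrability. First some bookkeeping. Since $a(k)=\sum_{i=1}^k a_i$ with $a_i\in[c_1,c_2]$ by Theorem~\ref{thm:VarOn}, we have $a(k)\asymp k$, hence $T_n\asymp n$ and, again by Theorem~\ref{thm:VarOn}, $\sigma_n^2=b(T_n)^2\asymp T_n\asymp n$; in particular $\sigma_n\to\infty$ and $\sigma_n\asymp\sqrt n$. Also $a(T_n)\ge n>a(T_n-1)\ge a(T_n)-c_2$, so $a(T_n)-n\in[0,c_2)$ and $(n-a(T_n))/\sigma_n\to0$; by Slutsky the two forms of the statement are equivalent, and it suffices to treat $(a(Q_n)-n)/\sigma_n$. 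Because $a$ is strictly increasing and $\mathbf O(\cdot)$ is nondecreasing, for every integer $m\ge1$ we have the exact identity
\[
\{a(Q_n)\le a(m)\}=\{Q_n\le m\}=\{\mathbf O(m)\ge n\}.
\]

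Now fix $x\in\R$ and let $m_n=m_n(x)$ be the largest integer with $a(m_n)\le n+x\sigma_n$; this is a deterministic sequence, well defined and tending to $\infty$ for large $n$. Since the increments of $a$ lie in $[c_1,c_2]$ we get $a(m_n)=n+x\sigma_n+O(1)$, and comparing with $a(T_n)=n+O(1)$ we also get $|m_n-T_n|=O(\sigma_n)=O(\sqrt n)$. By the identity above, $\P\big(a(Q_n)\le n+x\sigma_n\big)=\P(\mathbf O(m_n)\ge n)$, and writing this out,
\[
\P(\mathbf O(m_n)\ge n)=\P\!\left(\frac{\mathbf O(m_n)-a(m_n)}{b(m_n)}\ \ge\ \frac{n-a(m_n)}{b(m_n)}\right),\qquad \frac{n-a(m_n)}{b(m_n)}=\frac{-x\sigma_n+O(1)}{b(m_n)}.
\]
By Theorem~\ref{thm:cltokthm} applied along the deterministic sequence $m_n\to\infty$, $\big(\mathbf O(m_n)-a(m_n)\big)/b(m_n)\Rightarrow N(0,1)$; hence, provided $b(m_n)/\sigma_n\to1$, the right-hand threshold converges to $-x$ and $\P(a(Q_n)\le n+x\sigma_n)\to\P(N(0,1)\le x)$. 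As $x$ is arbitrary, this gives $(a(Q_n)-n)/\sigma_n\Rightarrow N(0,1)$.

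I expect the ratio limit $b(m_n)/\sigma_n\to1$ to be the main obstacle, since the bare bound $b(k)^2\asymp k$ of Theorem~\ref{thm:VarOn} does not control $b(m_n)^2/b(T_n)^2$ when $m_n$ and $T_n$ differ by $O(\sqrt n)$. To close it I would use that the variance is regular at unit scale, namely $b(k+1)^2-b(k)^2=\mathrm{Var}(O_{k+1})+2\,\mathrm{Cov}\big(O_{k+1},\mathbf O(k)\big)=O(1)$ uniformly in $k$, which follows from the exponential mixing and moment control from \cite{DS} that underlie Theorem~\ref{thm:VarOn}: the summands $O_k$ have uniformly bounded moments and $\mathrm{Cov}(O_{k+1},O_j)$ decays geometrically in $k+1-j$, so $\sum_{j\le k}|\mathrm{Cov}(O_{k+1},O_j)|=O(1)$. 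Then $|b(m_n)^2-\sigma_n^2|\le C|m_n-T_n|=O(\sqrt n)=o(\sigma_n^2)$, which gives $b(m_n)/\sigma_n\to1$. (If the sharper asymptotic $b(k)^2\sim\sigma^2k$ is available en route to Theorem~\ref{thm:VarOn}, it may be invoked here instead.)

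Finally, for the $L^2$ statement it remains to show that the family $\big((a(Q_n)-n)/\sigma_n\big)^2$ is uniformly integrable, for which it suffices to bound $\sup_n\E\big|(a(Q_n)-n)/\sigma_n\big|^{2+\delta}$ for some $\delta>0$. On $\{a(Q_n)-n>s\}$ the identity above forces $\mathbf O(m)<n$ for the deterministic index $m=\max\{j:a(j)\le n+s\}$, which satisfies $a(m)=n+s+O(1)$ and $b(m)^2\asymp n+s$; similarly $\{a(Q_n)-n<-s\}$ forces $\mathbf O(m')\ge n$ for a deterministic $m'$ with $a(m')=n-s+O(1)$ and $b(m')^2\asymp n-s$. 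Using the moment bounds of \cite{DS}, $\E|\mathbf O(m)-a(m)|^p\le C_p\,b(m)^p$, Markov's inequality gives $\P(|a(Q_n)-n|>s)\le C_p\,(n+s)^{p/2}/s^p$ for $s\ge 2c_2$ (and $\le1$ always). Integrating $\min\big(1,\,C_p(n+s)^{p/2}/s^p\big)$ against $s^{1+\delta}\,ds$, with the crossover at $s\asymp\sqrt n$ and $p$ chosen large enough in terms of $\delta$, yields $\E|a(Q_n)-n|^{2+\delta}\le C\,n^{1+\delta/2}\asymp\sigma_n^{2+\delta}$, hence the desired uniform integrability. Combined with the CLT just established, $\E\big((a(Q_n)-n)/\sigma_n\big)^2$ converges to the second moment of the standard normal, which is $1$.
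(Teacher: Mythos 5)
Your argument for the convergence in distribution is essentially the same inversion of the CLT for $\mathbf O(n)$ that the paper uses, rewritten slightly: you pass through $\P(a(Q_n)\le n+x\sigma_n)=\P(\mathbf O(m_n)\ge n)$ with a deterministic $m_n$, while the paper passes through $\P(Q_n<T_{n+x\sigma_n})$. One thing you do that the paper glosses over is to justify why the normalization is stable under shifts of order $\sqrt n$: the paper invokes "the fact that $\sigma_{n+x\sqrt n}/\sigma_n\to1$" without proof, and the crude bound $b(k)^2\asymp k$ from Theorem~\ref{thm:VarOn} is not by itself enough to give that ratio limit. Your observation that $b(k+1)^2-b(k)^2=\mathrm{Var}(O_{k+1})+2\,\mathrm{Cov}(O_{k+1},\mathbf O(k))=O(1)$ uniformly in $k$ (from Theorem~\ref{outletmomentthm} and Corollaries~\ref{cor:mixing}--\ref{cor:cov}) is the clean way to close this and gives $|b(m_n)^2-\sigma_n^2|=O(|m_n-T_n|)=O(\sqrt n)=o(\sigma_n^2)$. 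So the distributional part of your proof is correct and in fact a bit more careful than the paper's.

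The $L^2$ part has a genuine gap in the far tail. Your plan is to bound $\sup_n\E|(a(Q_n)-n)/\sigma_n|^{2+\delta}$ by Markov applied to $\E|\mathbf O(m)-a(m)|^p$ with "$p$ chosen large enough in terms of $\delta$." But the only moment bound of this kind available in the paper is Corollary~\ref{cor:momentmixing}, part~1, which holds only for $0\le t\le4$. With $p=4$ the tail bound you derive is $\P(|a(Q_n)-n|>s)\lesssim(n+s)^2/s^4$, which for $s\gg n$ decays only like $s^{-2}$; integrating against $s^{1+\delta}\,ds$ over $s>n$ then gives $\int_n^\infty s^{\delta-1}\,ds=\infty$ for any $\delta>0$, so your $L^{2+\delta}$ estimate does not close. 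Higher moments of $\mathbf O(m)-a(m)$ are not established in the paper (Theorem~\ref{outletmomentthm} controls moments of a single $O_k$, not of the centered partial sum), and they cannot be produced by the fourth-moment machinery of Lemma~\ref{lem: withers22} alone. The paper instead uses the large-deviation estimate Lemma~\ref{lem:LD}, which gives stretched-exponential decay $\P(\mathbf O(n,n+m)\le\alpha m)\le C\exp(-m^\alpha)$; this is what truncates the event $\{Q_n>C_4n\}$ (equivalently $s\gg n$) and makes the fourth moment estimate $\E(Q_n-T_n)^4=O(n^2)$ possible. To repair your proof you would need to bring in Lemma~\ref{lem:LD} (or otherwise establish super-polynomial tail decay for $Q_n$) to handle $s>Cn$; the Markov-on-moments argument alone cannot reach there.
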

\begin{remark}
We would like to use Theorem \ref{thm:inverseCLT} to deduce CLTs for
the sequences $(\log\hat R_n)$ and $(\log(\hat\tau_n-p_c))$, both
of which are proved in the case of the regular tree in \cite{Goodman}.
It is not difficult to prove these results if one knows that
$(Q_n-T_n)/ \delta_n$ converges in distribution to a variable with the
standard normal distribution for some sequence $\delta_n$.
Unfortunately, Theorem \ref{thm:inverseCLT} does not appear to be
strong enough to show this. One possible approach to deduce a CLT
for
$(Q_n)$ from Theorem \ref{thm:inverseCLT} is to demonstrate that the
sequence~$(a_n)$ does not fluctuate too quickly as $n \to\infty$. For
instance one could try to\vadjust{\goodbreak} prove that there exists $a\in\R$ such that
for every sequence $(k_n)$ of natural numbers,
\[
(1/n)\sum_{k_n+1}^{k_n+n} a_i \to a \qquad\mbox{as } n \to\infty.
\]
\end{remark}

Although we are not able to prove CLTs for $(\log\hat R_n)$ and $(\log
(\hat\tau_n-p_c))$, we show in the next corollaries that the
fluctuations are of the correct order of magnitude.
\begin{corollary}\label{cor:inversevar}
\[
{\mathbb E}(Q_n-T_n)^2\asymp n,\qquad{\mathbb E}(Q_n-{\mathbb
E}Q_n)^2\asymp n.
\]
\end{corollary}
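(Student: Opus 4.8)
The plan is to transfer the variance estimates for $({\mathbf O}(n))$ (Theorem~\ref{thm:VarOn}) and the $L^2$-type convergence for $(a(Q_n))$ (Theorem~\ref{thm:inverseCLT}) over to $(Q_n-T_n)$ and $(Q_n - \E Q_n)$ via the relation $a(Q_n) - a(T_n) = \sum_{k=T_n+1}^{Q_n} a_k$ (with the obvious sign convention when $Q_n < T_n$), using that each $a_k$ lies in the fixed interval $[c_1,c_2]$. Indeed, from $c_1|Q_n-T_n| \le |a(Q_n)-a(T_n)| \le c_2|Q_n-T_n|$ one gets $\E(Q_n-T_n)^2 \asymp \E(a(Q_n)-a(T_n))^2$. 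Since $a(T_n-1) < n \le a(T_n)$ and $a_{T_n} \le c_2$, we have $|a(T_n)-n| \le c_2$, so $\E(a(Q_n)-a(T_n))^2 = \E(a(Q_n)-n)^2 + O(1) = \sigma_n^2(1+o(1)) + O(1)$ by Theorem~\ref{thm:inverseCLT}. Finally $\sigma_n^2 = Var~{\mathbf O}(T_n) \asymp T_n$ by Theorem~\ref{thm:VarOn}, and because $a(T_n) \asymp T_n$ (again using $a_k \in [c_1,c_2]$, so $c_1 T_n \le a(T_n) \le c_2 T_n + c_2$) while $a(T_n) \asymp n$ by definition of $T_n$, we conclude $\sigma_n^2 \asymp n$, whence $\E(Q_n-T_n)^2 \asymp n$.

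For the second estimate I would write $Q_n - \E Q_n = (Q_n - T_n) - (\E Q_n - T_n)$ and argue that the deterministic shift $\E Q_n - T_n$ is negligible compared to $\sqrt n$. From the first part, $\E|Q_n - T_n| \le (\E(Q_n-T_n)^2)^{1/2} = O(\sqrt n)$, so $|\E Q_n - T_n| = |\E(Q_n - T_n)| = O(\sqrt n)$; hence $\E(Q_n - \E Q_n)^2 \le 2\,\E(Q_n-T_n)^2 + 2(\E Q_n - T_n)^2 = O(n)$, giving the upper bound. For the matching lower bound, $Var~Q_n = Var(Q_n - T_n)$ (subtracting a constant), and $Var(Q_n-T_n) = \E(Q_n-T_n)^2 - (\E(Q_n-T_n))^2$; I need that the square of the mean does not swallow the second moment. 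This is where Theorem~\ref{thm:inverseCLT} does the real work: since $(a(Q_n)-n)/\sigma_n \Rightarrow N(0,1)$ with convergence of second moments, and $a(Q_n)-a(T_n)$ differs from $a(Q_n)-n$ by a bounded amount, one checks that $(a(Q_n)-a(T_n))/\sigma_n \Rightarrow N(0,1)$ as well; dividing by $a_{Q_n}\in[c_1,c_2]$ (or rather sandwiching) shows $(Q_n-T_n)/\sigma_n$ is asymptotically nondegenerate, so $\E(Q_n-T_n)^2 - (\E(Q_n - T_n))^2 \asymp \sigma_n^2 \asymp n$.

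The main obstacle is the lower bound on $Var~Q_n$: one must rule out the possibility that $Q_n - T_n$ concentrates around a large deterministic value, i.e. that almost all of $\E(Q_n-T_n)^2$ comes from $(\E(Q_n-T_n))^2$. The resolution is to exploit the distributional (not just $L^2$) statement in Theorem~\ref{thm:inverseCLT}: a sequence converging to a nondegenerate Gaussian after centering by $n$ (equivalently by $a(T_n)$) cannot have its fluctuations be $o(\sigma_n)$, so the centered variable $Q_n - \E Q_n$ has variance of exact order $\sigma_n^2 \asymp n$. The one technical point to be careful about is that dividing $a(Q_n)-a(T_n)$ by the \emph{random} quantity $a_{Q_n}$ (to recover $Q_n - T_n$) is not exact — the sum $\sum_{k=T_n+1}^{Q_n} a_k$ is not a constant multiple of the number of terms — but the two-sided bound $a_k\in[c_1,c_2]$ makes all the $\asymp$ comparisons go through without needing a law of large numbers for $(a_k)$ (which, as the Remark notes, is not available).
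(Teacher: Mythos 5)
Your proposal follows essentially the same route as the paper: sandwich $a(Q_n)-a(T_n)$ between $c_1(Q_n-T_n)$ and $c_2(Q_n-T_n)$, use $a(T_n)=n+O(1)$ and $\sigma_n^2\asymp T_n\asymp n$, and read off $\E(Q_n-T_n)^2\asymp n$ from the second-moment part of Theorem~\ref{thm:inverseCLT}; the upper bound for $\E(Q_n-\E Q_n)^2$ then follows since the variance is minimized by centering at the mean.

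The one place you stop short of a proof is the lower bound on $\E(Q_n-\E Q_n)^2$. The step ``$(Q_n-T_n)/\sigma_n$ is asymptotically nondegenerate, so $\E(Q_n-T_n)^2-(\E(Q_n-T_n))^2\asymp\sigma_n^2$'' is not a valid implication on its own: convergence in distribution gives no control of $\E Q_n$, and it does not by itself rule out that nearly all of $\E(Q_n-T_n)^2$ sits in $(\E(Q_n-T_n))^2$. What actually closes this gap --- and is what your phrase ``cannot have fluctuations $o(\sigma_n)$'' is gesturing at --- is that the distributional CLT yields \emph{two-sided} tail bounds: there exists $C>0$ with $\P(Q_n\ge T_n+\sqrt n)>C$ and $\P(Q_n\le T_n-\sqrt n)>C$ for all $n$. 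One then argues by cases on the sign of $\E Q_n-T_n$: if $\E Q_n\ge T_n$, the event $\{Q_n\le T_n-\sqrt n\}$ forces $|Q_n-\E Q_n|\ge\sqrt n$, whence $\E(Q_n-\E Q_n)^2\ge n\,\P(Q_n\le T_n-\sqrt n)>Cn$; if $\E Q_n\le T_n$, use the other tail. This two-line case analysis is precisely the content of the paper's proof of the lower bound and should be made explicit rather than left at the heuristic level.
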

\begin{corollary}\label{cor:radii}
\[
{\mathbb E}(\log\hat R_n - T_n)^2 \asymp n,\qquad\E(\log\hat R_n - \E
\log\hat R_n)^2 \asymp n.
\]
\end{corollary}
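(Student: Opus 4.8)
\emph{Strategy.} The plan is to deduce Corollary~\ref{cor:radii} from Corollary~\ref{cor:inversevar} by showing that $\log\hat R_n$ and $Q_n$ differ by $O(1)$ in $L^2$, uniformly in $n$. Write $\|X\|_2=(\E X^2)^{1/2}$, so that Corollary~\ref{cor:inversevar} reads $\|Q_n-T_n\|_2\asymp\sqrt n$ and $\|Q_n-\E Q_n\|_2\asymp\sqrt n$. It then suffices to prove
\[
\sup_n\,\E\bigl(\log\hat R_n-Q_n\bigr)^2<\infty
\]
(in fact $\|\log\hat R_n-Q_n\|_2=o(\sqrt n)$ is enough): such a bound gives $|\E\log\hat R_n-\E Q_n|\le\|\log\hat R_n-Q_n\|_2$, and, writing $\log\hat R_n-T_n=(Q_n-T_n)+(\log\hat R_n-Q_n)$ as well as $\log\hat R_n-\E\log\hat R_n=(Q_n-\E Q_n)+\bigl[(\log\hat R_n-Q_n)-\E(\log\hat R_n-Q_n)\bigr]$, the triangle inequality in $L^2$ applied in both directions yields $\E(\log\hat R_n-T_n)^2\asymp n$ and $\E(\log\hat R_n-\E\log\hat R_n)^2\asymp n$, which is the assertion of the corollary.

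\emph{Comparing $\log\hat R_n$ with $Q_n$.} Two inequalities carry the main content, each valid up to a correction that will be bounded in $L^2$ uniformly in $n$. For the first, recall that $\cup_{j=1}^{n}\hat V_j=G_{i_n-1}$ has radius $\hat R_n$, and each outlet $\hat e_j$ with $j\le n$ lies on $\Delta G_{i_j-1}$ with $G_{i_j-1}\subseteq G_{i_n-1}$; hence $\hat e_j$ has an endpoint in $B(\hat R_n)$ and so both endpoints in $B(\hat R_n+1)$, so that $B(\hat R_n+1)$ contains the $n$ distinct outlets $\hat e_1,\dots,\hat e_n$. This forces $Q_n\le\log\hat R_n+O(1)$. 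For the second, $\mathbf{O}(Q_n-1)<n$ means that $B(2^{\,Q_n-1})$ contains fewer than $n$ outlets (up to the counting correction), so some $\hat e_j$ with small index has an endpoint outside $B(2^{\,Q_n-1})$; since $\hat e_j$ also has an endpoint in $G_{i_j-1}\subseteq\cup_{i\le n}\hat V_i$, that endpoint lies at distance $\ge 2^{\,Q_n-1}$ from the origin, whence $\hat R_n\ge 2^{\,Q_n-1}$ and $\log\hat R_n\ge Q_n-O(1)$.

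\emph{The corrections.} Making both inequalities rigorous requires absorbing the discrepancy between $\mathbf{O}(k)$ and the true number of outlets in $B(2^k)$: the boundedly many outlets with both endpoints in $B(1)$, which $\mathbf{O}$ omits by definition; outlets whose two endpoints straddle a dyadic sphere $\partial B(2^j)$ and are therefore counted by no $O_j$; and the few extra dyadic scales, or the few later ponds, one may have to include to account for a handful of additional outlets. Each of these is controlled in $L^2$ uniformly in $n$ by combining the renewal structure and exponential mixing of Section~\ref{sec:outlets} with the moment estimates of \cite{DS}. The key point is that, by the almost sure bounds $C_1 m\le\log\hat R_m\le C_2 m$ of \cite{DS} and the fact that $a_m\asymp 1$, the outlets relevant to $B(2^{Q_n})$ are essentially the first $\approx n$ of them and lie spread over the dyadic scales $1,\dots,O(n)$ at density $O(1)$ per scale, so the expected number of them falling exactly on a dyadic sphere is at most $\sum_j 2^{-j}\,\E\bigl[\#\{\text{outlets near scale }j\}\bigr]=O(1)$; and that a run of consecutive outlet-free annuli, as well as a large ratio $\hat R_{m+1}/\hat R_m$, has exponentially small probability, which turns a bounded count-error into a bounded scale-error.

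\emph{Main obstacle.} The substantive difficulty is exactly this last step: upgrading the heuristic ``$\log\hat R_n=Q_n+O(1)$'' to a bound on $\E(\log\hat R_n-Q_n)^2$ uniform in $n$, while respecting the boundary and dyadic conventions built into the definition of $\mathbf{O}$. All the tail estimates that are needed are of a kind already provided by \cite{DS} and by Section~\ref{sec:outlets}; the work lies in arranging the dyadic-scale bookkeeping so that each error term is a geometrically convergent series rather than one of size $O(n)$.
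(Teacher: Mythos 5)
The high-level idea of comparing $\log\hat R_n$ with $Q_n$ and then invoking Corollary~\ref{cor:inversevar} is also what the paper does, so the strategy is in the right family. However, the proposal has a genuine gap precisely at the point you yourself flag as ``the main obstacle'': the estimate $\sup_n\E(\log\hat R_n-Q_n)^2<\infty$ (or even the weaker $\|\log\hat R_n-Q_n\|_2=o(\sqrt n)$) is asserted with a heuristic discussion of dyadic bookkeeping and densities of outlets per scale, but no argument is actually given that would control it. The heuristic ``each annulus has $O(1)$ outlets and runs of outlet-free annuli are exponentially rare'' does not by itself produce a uniform $L^2$ bound on $\log\hat R_n-Q_n$: the natural way to bound $\log\hat R_n-Q_n\leq j$ is via a $p_c$-open circuit in an annulus of width $j$ at scale $\approx Q_n$, but $\P(\text{no such circuit})$ is bounded away from $0$ for fixed width $j$, and $Q_n$ is random. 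To get an estimate uniform over the random scale one is forced to union-bound over the $O(n)$ scales in play, which requires annulus widths growing with $n$. The paper does exactly this with width $\sqrt k$ (via the event $\widetilde A_n$), yielding only $\log\hat R_n-Q_n=O(\sqrt n)$ on the good event; your $O(1)$ claim is much stronger and would need a genuinely new argument.

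There is a second, structural issue. The symmetric triangle-inequality plan requires $\|\log\hat R_n-Q_n\|_2=o(\sqrt n)$ for the \emph{lower} bound (to beat the reverse triangle inequality against $\|Q_n-T_n\|_2\asymp\sqrt n$), while $O(\sqrt n)$ suffices for the upper bound. The paper avoids this asymmetry entirely: for the lower bound it uses the one-sided deterministic inequality $\hat R_n\geq 2^{Q_n-1}$ together with $\P(Q_n>T_n+\sqrt n)>C_1$ from the CLT, giving $\E(\log\hat R_n-T_n)^2\geq C_1(\sqrt n-1)^2$ without needing any control on $\log\hat R_n-Q_n$; the harder two-sided comparison is then used only for the upper bound, where $O(\sqrt n)$ is enough. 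Your proposal also does not address the tail truncation required in the upper bound, i.e.\ the contributions from $\{\hat R_n<2^{\sqrt n}\}$ and $\{\hat R_n>2^{C_2 n}\}$, which the paper handles with Theorem~\ref{outletmomentthm}, Lemma~\ref{lem:LD} and RSW; without it, the second moment you want to compare is not even clearly finite on the bad events. In short: the plan is reasonable, but the core quantitative comparison $\log\hat R_n\approx Q_n$ is left unproved, the strength claimed for it ($O(1)$ in $L^2$) is likely too optimistic, and a symmetric $L^2$ comparison is harder than it needs to be for the lower bound.
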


In the statements of the next two corollaries, we use the sequence
$(p_n)$, defined in Section \ref{sec:corlength}.
\begin{corollary}\label{cor:weights}
\[
{\mathbb E}\biggl(\log\frac{\hat\tau_n-p_c}{p_{2^{T_n}}-p_c}
\biggr)^2 \asymp n,\qquad\E\bigl(\log(\hat\tau_n - p_c) - \E\log(\hat\tau_n
- p_c)\bigr)^2 \asymp n.
\]
\end{corollary}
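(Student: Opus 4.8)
The plan is to reduce both estimates to the bounds for $\log\hat R_n$ given by Corollary~\ref{cor:radii}, the only new ingredients being two comparisons linking the $n$-th outlet weight to the correlation length evaluated at the $n$-th pond radius. Write $\hat S_n=\lceil\log\hat R_n\rceil$ and, for integers $k\geq1$, $g(k)=\log(p_{2^k}-p_c)$. The first ingredient, which we take from \cite{DS} together with the near-critical percolation input assembled in Section~\ref{sec:corlength}, is that the $n$-th pond looks, on its own scale, like a near-critical cluster at parameter $\hat\tau_n$; quantitatively, there is $C<\infty$ with $\E(\log(\hat\tau_n-p_c)-g(\hat S_n))^2\leq C$ for every $n$. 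The second is a property of the sequence $(p_n)$ recorded in Section~\ref{sec:corlength}: there are $0<c_1\leq c_2<\infty$ and $C<\infty$ with
\begin{equation*}
c_1|j-k|-C\ \leq\ |g(j)-g(k)|\ \leq\ c_2|j-k|+C\qquad\text{for all }j,k\geq1 .
\end{equation*}
Together with $\hat S_n=\log\hat R_n+O(1)$ this gives, via Corollary~\ref{cor:radii}, that $\E(\hat S_n-T_n)^2\asymp n$ and $\E(\hat S_n-\E\hat S_n)^2\asymp n$; the almost sure bounds of \cite{DS} ensure that the second moments appearing below are finite.

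It is convenient to prove the second estimate first. Write $\log(\hat\tau_n-p_c)=A_n+g(\hat S_n)$ with $\E A_n^2=O(1)$. From the identity $Var\,g(\hat S_n)=\tfrac12\E\big(g(\hat S_n)-g(\hat S_n')\big)^2$, where $\hat S_n'$ is an independent copy of $\hat S_n$, the two-sided bound on $g$ gives $Var\,g(\hat S_n)\asymp Var\,\hat S_n\asymp Var(\log\hat R_n)\asymp n$; expanding $Var\big(A_n+g(\hat S_n)\big)$ and estimating the covariance by Cauchy--Schwarz then yields $\E\big(\log(\hat\tau_n-p_c)-\E\log(\hat\tau_n-p_c)\big)^2\asymp n$. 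For the first estimate, since $\log(p_{2^{T_n}}-p_c)=g(T_n)$ is deterministic,
\begin{equation*}
\E\Big(\log\frac{\hat\tau_n-p_c}{p_{2^{T_n}}-p_c}\Big)^2 \;=\; Var\big(\log(\hat\tau_n-p_c)\big)\;+\;\big(\E\log(\hat\tau_n-p_c)-g(T_n)\big)^2 .
\end{equation*}
The first term is $\asymp n$ by the previous step. For the second, $\E\log(\hat\tau_n-p_c)-g(T_n)=\E A_n+\E\big(g(\hat S_n)-g(T_n)\big)$ with $|\E A_n|=O(1)$ and, by the upper bound on $|g(\hat S_n)-g(T_n)|$ and Corollary~\ref{cor:radii}, $|\E(g(\hat S_n)-g(T_n))|\leq c_2\E|\hat S_n-T_n|+C\leq c_2\big(\E(\hat S_n-T_n)^2\big)^{1/2}+C=O(\sqrt n)$; hence the second term is $O(n)$ and the whole expression is $\asymp n$. (Running the same argument with $Q_n$ in place of $\hat S_n$ and Corollary~\ref{cor:inversevar} in place of Corollary~\ref{cor:radii}, using $\hat R_n\asymp 2^{Q_n}$, gives an alternative route and explains why the natural deterministic reference for $\hat\tau_n-p_c$ is $p_{2^{T_n}}-p_c$.)

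The main obstacle is the first ingredient: bounding, in the second-moment sense and uniformly in $n$, the gap between $\log(\hat\tau_n-p_c)$ and $\log(p_{\hat R_n}-p_c)$ --- equivalently, showing that the parameter whose correlation length equals $\hat R_n$ is, except on an event of small probability, within a bounded factor of $\hat\tau_n$ after subtracting $p_c$. This is precisely the sort of near-critical pond estimate that \cite{DS} provides. A secondary point is the lower inequality $|g(j)-g(k)|\geq c_1|j-k|-C$, i.e.\ that $p-p_c$ decreases by a definite factor under each doubling of the scale, together with $\hat S_n=\log\hat R_n+O(1)$; both belong to the correlation-length material of Section~\ref{sec:corlength}.
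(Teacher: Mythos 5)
Your reduction strategy is sound and close in spirit to the paper's: sandwich $\log(\hat\tau_n-p_c)$ between a random reference scale (you use $\hat S_n=\lceil\log\hat R_n\rceil$; the paper uses $Q_n$, which is essentially equivalent since $g(2^{Q_n})\le \hat\tau_n\le f(2^{Q_n-1})$), transfer variances via the two-sided estimate \eqref{ineqP}, and feed in Corollary~\ref{cor:radii} (resp.\ Corollary~\ref{cor:inversevar}) for the main $\asymp n$ term. The variance-of-a-function identity $\mathrm{Var}\,g(\hat S_n)=\tfrac12\E(g(\hat S_n)-g(\hat S_n'))^2$ combined with \eqref{ineqP} is a clean way to get $\mathrm{Var}\,g(\hat S_n)\asymp n$, and the $\mathrm{Var}+\mathrm{bias}^2$ decomposition for the first statement is correct.

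The gap is the ``first ingredient,'' which you flag as the main obstacle but then assert without proof: $\E\big(\log(\hat\tau_n-p_c)-g(\hat S_n)\big)^2\le C$ uniformly in $n$. This is not available off the shelf from \cite{DS} or from Section~\ref{sec:corlength}. The difficulty is that $\hat S_n$ (or $Q_n$) is a function of the same weights that determine $\hat\tau_n$, so one cannot simply condition on the random scale and apply a deterministic-scale estimate. The paper devotes essentially all of its proof of this corollary to exactly this point: it introduces the extremal outlet weights $f(n)$ and $g(n)$, proves a fresh lemma giving $\E\big|\log\frac{f(n)-p_c}{p_n-p_c}\big|^t\le (Ct)^{Ct}$ (and likewise for $g$) for \emph{deterministic} $n$ using \eqref{eqExpDecay}, RSW, and Lemma~\ref{lem:LD}, and then handles the random scale by constructing near-critical ``protective'' events $D_n$ of polylogarithmic-free width $n^{1/4}$. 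Even after all that, the bound obtained is $\E\big(\log\frac{g(2^{Q_n})-p_c}{p_{2^{Q_n}}-p_c}\big)^2\le C\sqrt n$ and similarly for $f(2^{Q_n-1})$ --- a quantitatively weaker bound than the $O(1)$ you assert. Your $O(1)$ claim is both unproved and, given the tradeoff in the width of the events $D_n$ against $\P(D_n^c)$, not obviously true.

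Fortunately, the rest of your argument is robust: inspecting the Cauchy--Schwarz steps, you only need $\E A_n^2=o(n)$ (and $|\E A_n|=o(\sqrt n)$), which the paper's $O(\sqrt n)$ bound delivers. So the structure of your proof works, but you must actually establish that ingredient; as written, you have substituted an appeal to \cite{DS} for what is in fact the heart of the paper's proof (the $f,g$ lemma plus the $D_n$ construction). Replacing the asserted $O(1)$ with the provable $O(\sqrt n)$ and supplying that argument would make the proof complete.
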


Last we show that the sequences $(Q_n)$, $(\log\hat R_n)$ and $(\log
(\hat\tau_n - p_c))$ satisfy laws of large numbers.
\begin{corollary}\label{cor:SLLN2}
For any $r > 1/2$, each of the following sequences converges to 0
almost surely:
\[
\biggl(\frac{Q_n-T_n}{n^r}\biggr),\qquad\biggl(\frac{\log\hat R_n -
T_n}{n^r}\biggr),\qquad\biggl(\frac{1}{n^r} \log\frac{\hat\tau
_n-p_c}{p_{2^{T_n}}-p_c}\biggr).
\]
\end{corollary}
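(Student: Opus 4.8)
The plan is to obtain all three convergences from the strong law \eqref{eq:SLLN} for $({\mathbf O}(n))$ by a change of variables, together with the pathwise comparison estimates between the pond radius, the $n$-th outlet weight, and the outlet count that come from \cite{DS} (and that already underlie Corollaries~\ref{cor:radii} and~\ref{cor:weights}). The reduction rests on the identities
\[
\log\hat R_n - T_n \;=\; (\log\hat R_n - Q_n) + (Q_n - T_n)
\]
and
\[
\log\frac{\hat\tau_n-p_c}{p_{2^{T_n}}-p_c} \;=\; \log\frac{\hat\tau_n-p_c}{p_{2^{Q_n}}-p_c} \;+\; \log\frac{p_{2^{Q_n}}-p_c}{p_{2^{T_n}}-p_c}\,,
\]
so it suffices to show, for every $r>1/2$: (i) $(Q_n-T_n)/n^r\to 0$ a.s.; (ii) $(\log\hat R_n - Q_n)/n^r\to 0$ a.s. and $\bigl(\log\frac{\hat\tau_n-p_c}{p_{2^{Q_n}}-p_c}\bigr)/n^r\to 0$ a.s.; (iii) $\bigl|\log\frac{p_{2^{a}}-p_c}{p_{2^{b}}-p_c}\bigr| = O(|a-b|)$, which combined with (i) makes the last term $o(n^r)$ a.s. Step (iii) is deterministic: from the definition of $(p_n)$ in Section~\ref{sec:corlength} and standard near-critical estimates, $p_{2^a}-p_c$ and $p_{2^{a+1}}-p_c$ differ by a bounded factor uniformly in $a$, so one telescopes.

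For (i), fix $r'\in(1/2,r)$. By Theorem~\ref{thm:VarOn} we have $c_1 k\le a(k)\le c_2 k$ for all $k$, hence $T_n\asymp n$ with $a(T_n)\ge n$ and $a(T_n-1)<n$. By \eqref{eq:SLLN} with exponent $r'$, almost surely $|{\mathbf O}(k)-a(k)|\le k^{r'}$ for all large $k$. Set $k_n^{\pm}=T_n\pm\lceil C n^{r'}\rceil$ for a large constant $C$; since $T_n\asymp n$ these satisfy $k_n^{\pm}\asymp n$, so $(k_n^{\pm})^{r'}\le C_3 n^{r'}$ for all large $n$, where $C_3$ depends only on $c_1,c_2,r'$. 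Choosing $C$ with $c_1 C > C_3$: from $a(k_n^{+})\ge a(T_n)+c_1(k_n^{+}-T_n)\ge n+c_1 C n^{r'}$ and the SLLN bound, ${\mathbf O}(k_n^{+})\ge n+(c_1 C-C_3)n^{r'}\ge n$ for all large $n$, so $Q_n\le k_n^{+}$; symmetrically, $a(k_n^{-})\le a(T_n-1)-c_1(T_n-1-k_n^{-})< n-c_1 C n^{r'}+c_1$ gives ${\mathbf O}(k_n^{-})<n$ for large $n$, so $Q_n>k_n^{-}$. Hence $|Q_n-T_n|\le C n^{r'}+1$ eventually, a.s., and dividing by $n^r$ gives (i).

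For (ii), I would invoke the estimates of \cite{DS} showing that the $n$-th pond has radius comparable to $2^{Q_n}$ and that the $n$-th outlet weight satisfies $\hat\tau_n-p_c\asymp p_{2^{Q_n}}-p_c$, each off an exceptional event whose probability decays in a discrepancy parameter, uniformly in $n$. One inequality of the radius comparison is elementary: $\hat R_n\le 2^k$ forces $\hat R_1,\dots,\hat R_n\le 2^k$, hence $Q_n\le\log\hat R_n+O(1)$; the matching bound $\log\hat R_n\le Q_n+m$ can fail only when the invasion has exited $B(2^{Q_n+m})$ while producing fewer than $n$ outlets in $B(2^{Q_n-1})$, which by \cite{DS} has probability decaying exponentially in $m$, uniformly in $n$. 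Consequently $\P\bigl(|\log\hat R_n-Q_n|\ge\varepsilon n^r\bigr)$ is summable in $n$ for every $\varepsilon>0$, and Borel--Cantelli yields $(\log\hat R_n-Q_n)/n^r\to 0$ a.s.; the same scheme applied to the outlet-weight/pond-scale relation gives $\bigl(\log\frac{\hat\tau_n-p_c}{p_{2^{Q_n}}-p_c}\bigr)/n^r\to 0$ a.s. Combining (i), (ii), (iii) with the two displayed identities proves all three convergences.

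The main obstacle is step (ii): the $L^2$ estimates of Corollaries~\ref{cor:radii} and~\ref{cor:weights} only give convergence \emph{in probability} after normalization, so upgrading to almost sure convergence requires the quantitative, \emph{uniform-in-$n$} tail bounds on the discrepancies $\log\hat R_n-Q_n$ and $\log\frac{\hat\tau_n-p_c}{p_{2^{Q_n}}-p_c}$ from the correlation-length analysis of \cite{DS}, followed by a Borel--Cantelli argument; extracting and assembling those bounds is the technical heart. By contrast step (i) is routine once the linear growth $a(k)\asymp k$ and \eqref{eq:SLLN} are in hand, and step (iii) is deterministic.
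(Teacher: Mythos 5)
Your decomposition is the paper's: write $\log\hat R_n - T_n = (\log\hat R_n - Q_n) + (Q_n - T_n)$ (and the analogous identity for the weights), then control the first summand separately and use the SLLN for ${\mathbf O}(n)$ for the second. Your step~(iii) is exactly inequality~\eqref{ineqP}, and your step~(i) is a genuine and correct alternative to the paper's argument: the paper plugs the \emph{random} index $Q_n$ into the SLLN (using DS Theorem~1.4 and a moment bound to justify this), whereas you sandwich $Q_n$ between the deterministic indices $k_n^{\pm}=T_n\pm\lceil Cn^{r'}\rceil$ and apply the SLLN only at deterministic indices. This buys you a cleaner argument that sidesteps the random-index subtlety and the citation to DS Theorem~1.4.

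The genuine gap is in step~(ii), and you flag it yourself: you do not actually establish the a.s.\ bounds on $\log\hat R_n-Q_n$ and on $\log\frac{\hat\tau_n-p_c}{p_{2^{Q_n}}-p_c}$. Two points here. First, the attribution is off: the quantitative control you need is not imported from \cite{DS} but is built in the \emph{present} paper in the proofs of Corollaries~\ref{cor:radii} and~\ref{cor:weights}, via the events $\widetilde A_n$ (a $p_c$-open circuit in every thin annulus $Ann(2^{k-\sqrt k},2^k)$ for $k\ge\sqrt n$, $\P(\widetilde A_n^c)\le C\exp(-n^{c})$) and $D_n$ (a $p_c$-open circuit near scale $2^{n}$ connected to infinity by a $p_{2^{n-2n^{1/4}}}$-open path, plus a $p_{2^{n+n^{1/4}}}$-closed dual circuit, with the same stretched-exponential failure probability). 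Second, the shape of the bound differs from what you propose: the paper does not show a tail $\P(|\log\hat R_n-Q_n|>m)\le Ce^{-\delta m}$ uniformly in $n$ and then Borel--Cantelli over $m$; instead it shows that on $\widetilde A_n\cap\{\sqrt n<\log\hat R_n<C_6n\}$, which occurs a.s.\ for all large $n$, one has the \emph{deterministic} inequality $\log\hat R_n-\sqrt{C_6 n}-1\le Q_n\le\log\hat R_n+1$, hence $|\log\hat R_n-Q_n|=O(\sqrt n)$ a.s.\ eventually (and similarly $|\log\frac{\hat\tau_n-p_c}{p_{2^{Q_n}}-p_c}|=O(n^{1/4})$ a.s.\ eventually, via $D_n$). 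Since $r>1/2$ these are $o(n^r)$, which is all that is needed. Your heuristic ("exited $B(2^{Q_n+m})$ with fewer than $n$ outlets in $B(2^{Q_n-1})$") points in the right direction but does not by itself handle the randomness of $Q_n$; constructing events like $\widetilde A_n$ and $D_n$, which do not depend on $Q_n$ and control all relevant scales simultaneously, is precisely the missing technical step.
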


\subsection{Structure of the paper}\label{subsec:structure}
In Section \ref{sec:corlength} we recall the definition of the
correlation length, which is vital to all of our proofs. In
Section \ref{sec:outlets} we describe and prove several properties of
the outlet variables $(O_k)$ that will be used in the proofs of
Theorems \ref{thm:VarOn} and \ref{thm:cltokthm} in Section \ref
{sec:CLT}. In Section \ref{sec:further}, we prove consequences of the
CLT: Theorem \ref{thm:inverseCLT} and Corollaries \ref
{cor:inversevar}--\ref{cor:SLLN2}.

\section{Correlation length}\label{sec:corlength}

\subsection{Definition of correlation length}

For $m,n$ positive integers and $p\in(p_c,1]$ let
\[
\sigma(n,m,p) = \PP(\mbox{there is a $p$-open horizontal crossing of
}[0,n]\times[0,m]).
\]
Given $\varepsilon>0$, we define
%
%
\begin{equation}
L(p,\varepsilon)=\min\{n\dvtx  \sigma(n,n,p)\geq1-\varepsilon\}.\vadjust{\goodbreak}
\end{equation}
$L(p,\varepsilon)$ is called the \textit{finite-size scaling correlation length}
and it is known that~$L(p,\varepsilon)$ scales like the usual
correlation length (see \cite{kesten}).
It was also shown in \cite{kesten} that the scaling of~$L(p,\varepsilon)$ is
independent of~$\varepsilon$ given that it is
small enough,
that is, there exists $\varepsilon_0>0$ such that for all
$0<\varepsilon_1,\varepsilon_2\leq\varepsilon_0$
we have $L(p,\varepsilon_1)\asymp L(p,\varepsilon_2)$. (Here,
$\varepsilon_1$ and $\varepsilon_2$ are fixed numbers that do not
depend on $p$.) For simplicity we will write $L(p)=L(p,\varepsilon_0)$
in the entire paper.
We also define
\[
p_n=\sup\{p\dvtx  L(p)>n\}.
\]
It is easy to see that $L(p)\to\infty$ as $p\to p_c$ and $L(p)\to0$
as $p\to1$.
In particular, the probability $p_n$ is well defined.
It is clear from the definitions of $L(p)$ and $p_n$ and from the RSW
theorem that, for positive integers $k$ and $l$,
there exists $\delta_{k,l}>0$ such that, for any positive integer $n$
and for all $p\in[p_c,p_n]$,
\[
\PP(\mbox{there is a $p$-open horizontal crossing of }[0,kn]\times
[0,ln]) >\delta_{k,l}
\]
and
\[
\PP\bigl(\mbox{there is a $p$-closed horizontal dual crossing of
}([0,kn]\times[0,ln])^*\bigr)>\delta_{k,l}.
\]
By the FKG inequality and a standard gluing argument \cite{Grimmett},
Section 11.7, we get that, for positive integers $n$ and
$k\geq2$
and for all $p\in[p_c,p_n]$,
\[
\PP\bigl(\operatorname{Ann}(n,kn)\mbox{ contains a $p$-open circuit around the origin}\bigr)
>(\delta_{k,k-2})^4
\]
and
\begin{eqnarray*}
&&\PP\bigl(\operatorname{Ann}(n,kn)^*\mbox{ contains a $p$-closed dual circuit around the
origin}\bigr)
\\
&&\qquad
>(\delta
_{2k,k-1})^4.
\end{eqnarray*}

%
\subsection{Properties of correlation length}

We give the following results without proofs.

\begin{longlist}[(1)]
%
\item[(1)]
Reference \cite{kesten}, Theorem 2.
There is a constant $D_1<\infty$ such that, for all $p>p_c$,
%
%
\begin{equation}\label{thetacorrineq}
\theta(p)
\leq
\PP\bigl[0 \stackrel{p}{\longleftrightarrow}\partial B(L(p))\bigr]
\leq
D_1 \PP\bigl[0\stackrel{p_c}{\longleftrightarrow}\partial
B(L(p))\bigr],
\end{equation}
where $\theta(p)=\PP(0\stackrel{p}{\longleftrightarrow}\infty)$ is
the percolation function for Bernoulli percolation.
\item[(2)]
Reference \cite{Nguyen}, Section 4.
There is a constant $D_2>0$ such that, for all $n\geq1$,
%
%
\begin{equation}\label{eqNguyen}
{\mathbb P}\bigl(B(n)\stackrel{p_n}{\longleftrightarrow}\infty\bigr)\geq D_2.
\end{equation}
%
%
\item[(3)]
For any $n \geq1$ and $p \in[0,1]$, let $B_{n,p}$ be the event that
there is a $p$-closed circuit around the origin in the dual lattice
with radius at least $n$.
There exist constants $D_3<\infty$ and $D_4>0$ such that for all $p> p_c$,
%
%
\begin{equation}\label{eqExpDecay}
{\mathbb P}(B_{n,p})
\leq
D_3 \exp\biggl\{-D_4\frac{n}{L(p)}\biggr\}.\vadjust{\goodbreak}
\end{equation}
Equation (\ref{eqExpDecay}) follows, for example, from \cite{Jarai},
(2.6) and (2.8) (see also \cite{Nolin}, Lemma 37 and Remark 38).
%
\item[(4)]
There exist constants $D_5>0$ and $D_6<\infty$ such that for all
$m,n\geq1$,
%
%
\begin{equation}\label{ineqP}
D_5\biggl|\log\frac{m}{n}\biggr| \leq\biggl|\log\frac
{p_m-p_c}{p_n-p_c}\biggr| \leq D_6\biggl|\log\frac{m}{n}\biggr|.
\end{equation}
This is a consequence of \cite{Nolin}, Proposition 34, and a priori
bounds on the 4-arm exponent.
\end{longlist}

\section{Properties of the outlet variables}\label{sec:outlets}
In this section we describe several important properties of the
variables $(O_k)$. We first recall the following theorem from \cite
{DS} that gives $k$-independent bounds on all of their moments.
\begin{theorem}\label{outletmomentthm}
There exists $c_1<\infty$ such that for all $t,k \geq1$,
%
%
\begin{equation}\label{eq: momentbound1}
\E(O_k^t) \leq(c_1t)^{3t}.
\end{equation}
\end{theorem}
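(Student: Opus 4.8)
The plan is to bound the $t$-th moment of $O_k$ by controlling, for each integer $j\geq 1$, the probability that $Ann(2^{k-1},2^k)$ contains at least $j$ outlets, and then summing $\P(O_k \geq j)$ weighted appropriately. The key structural fact is that an outlet is an edge $e$ with the property that (i) $e$ lies on the invaded region, and (ii) outside $e$ (on the far side of $e$ from the origin, in the sense that $e$ is the maximal-weight edge among all edges invaded after it) there is a $\tau_e$-open connection to $\infty$, while there is a $\tau_e$-closed dual circuit surrounding the origin and passing ``just inside'' the location where $e$ sits. More precisely, if $\hat e_m$ is the $m$-th outlet with weight $\hat\tau_m$, then between consecutive outlets the invasion is confined by a $\hat\tau_m$-closed dual circuit, and $\hat\tau_m \downarrow p_c$. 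The classical way (going back to the analysis in \cite{DS}) to capture this is: for each outlet $\hat e_m$ falling in the annulus, there is a dual $\hat\tau_m$-closed circuit around the origin of radius at least $2^{k-1}$, i.e.\ the event $B_{2^{k-1},\hat\tau_m}$ occurs, and moreover the relevant weights $\hat\tau_m$ are all close to $p_c$ because otherwise the correlation length $L(\hat\tau_m)$ would be too small to be consistent with a large closed circuit.

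Concretely, I would first argue that if $O_k \geq j$ then there exist $j$ outlets $\hat e_{m_1},\dots,\hat e_{m_j}$ in $Ann(2^{k-1},2^k)$ with strictly decreasing weights $\hat\tau_{m_1} > \cdots > \hat\tau_{m_j} > p_c$, and each $\hat\tau_{m_i}$ admits a $\hat\tau_{m_i}$-closed dual circuit around the origin of radius at least $2^{k-1}$ lying inside $B(2^k)$. By monotonicity in $p$ and \eqref{eqExpDecay}, the existence of such a circuit for parameter $p$ forces (with overwhelming probability) $L(p) \geq c\, 2^{k-1}/\log(1/\cdots)$; more usefully, I would run the argument the other way: fix a threshold parameter, and show that having $j$ nested closed dual circuits in the annulus, each closed at successively smaller parameters $p > p_c$, is an event whose probability decays like a product of RSW/BK-type factors. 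The cleanest route is to dyadically partition the annulus and use a BK-style or van den Berg--Kesten disjoint-occurrence bound: finding $\ell$ disjoint closed dual circuits around the origin in $Ann(2^{k-1},2^k)$ at a common parameter $p$ has probability at most $(1-\delta)^{c\ell}$ uniformly for $p \leq p_{2^{k-1}}$, by the standard gluing estimates recalled in Section~\ref{sec:corlength}; and the outlets contribute disjoint such circuits at parameters that are all $\leq p_{2^{k-1}}$ up to a controlled exceptional event handled by \eqref{eqExpDecay}. This yields a bound of the form $\P(O_k \geq j) \leq C_1 \exp(-C_2 j)$ for all $j\geq 1$, uniformly in $k$.

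Given the exponential tail $\P(O_k \geq j) \leq C_1 e^{-C_2 j}$, the moment bound is routine: $\E(O_k^t) = \sum_{j\geq 1}(j^t - (j-1)^t)\P(O_k\geq j) \leq t\sum_{j\geq 1} j^{t-1} C_1 e^{-C_2 j}$, and estimating $\sum_j j^{t-1}e^{-C_2 j}$ by comparison with the Gamma integral $\int_0^\infty x^{t-1}e^{-C_2 x}\,dx = \Gamma(t)/C_2^t$ together with Stirling's formula $\Gamma(t) \leq (Ct)^t$ gives $\E(O_k^t) \leq (c_1 t)^{3t}$ after absorbing polynomial-in-$t$ and constant factors into the base (the exponent $3$, rather than $1$, gives ample room for these crude bounds).

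I expect the main obstacle to be the first part: rigorously extracting, from the event $\{O_k \geq j\}$, a collection of $j$ (nearly) disjoint closed dual circuits around the origin confined to $Ann(2^{k-1},2^k)$ with parameters controllably close to $p_c$, so that the uniform RSW estimates of Section~\ref{sec:corlength} and the exponential decay \eqref{eqExpDecay} can be applied simultaneously. The geometry is delicate because the outlets need not be radially ordered, the confining circuits associated to different outlets can be nested rather than spatially separated (so one must be careful about what ``disjoint occurrence'' means and possibly pass to alternating annular sub-scales to restore genuine spatial disjointness), and one must rule out the scenario where many outlets occur at weights bounded away from $p_c$ --- which is exactly where \eqref{eqExpDecay}, applied at a well-chosen parameter such as $p_{2^{k-1}}$, does the work. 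Once this combinatorial-geometric reduction is in place, everything else is a soft summation. (This is the content of the corresponding argument in \cite{DS}, which I would cite and adapt rather than fully reprove.)
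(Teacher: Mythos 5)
The paper itself does not prove Theorem~\ref{outletmomentthm}: it is stated explicitly as a recall from \cite{DS}, and no internal argument is given here. Your closing decision to cite \cite{DS} and adapt rather than fully reprove therefore coincides with what the authors do, and is the honest resolution of the exercise.

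The intermediate sketch, however, has a genuine gap at exactly the step you flag as ``the main obstacle,'' and I do not think it can be patched in the way you suggest. The closed dual circuits associated to distinct outlets $\hat e_{m_1},\hat e_{m_2},\ldots$ falling in $Ann(2^{k-1},2^k)$ are nested --- each one confines the invaded region $G_{i_m-1}$ just before the corresponding outlet is taken --- and in general they \emph{share} dual edges: any boundary edge $f$ of $G_{i_m-1}$ with $\tau_f\geq\hat\tau_m$ (automatic for circuit edges) necessarily satisfies $\tau_f>\hat\tau_{m+1}$ as well, so if it is not invaded between steps $i_m$ and $i_{m+1}-1$ then $f^*$ lies on the confining circuits of both the $m$-th and the $(m+1)$-th ponds. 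Consequently $\{O_k\geq j\}$ does not yield $j$ circuits occurring disjointly in the van den Berg--Kesten sense, nor do the circuits live inside $Ann(2^{k-1},2^k)$ (only the outlet edges do --- the circuits can bulge far inward and outward), so the bound $(1-\delta)^{cj}$ is simply not available as written. ``Alternating annular sub-scales'' does not obviously restore disjointness either, since all $j$ outlet edges already live in the single fixed annulus $Ann(2^{k-1},2^k)$ and cannot be separated radially. The geometry one actually has to exploit is local arm events at each outlet edge (a $p_c$-open arm inward, a near-critical open arm outward, and two closed dual arms that close into the confining circuit), controlled uniformly over outlet weights $p$ with $L(p)\gtrsim 2^k$, with \eqref{eqExpDecay} ruling out outlet weights far from $p_c$; this is a different and more delicate accounting than a disjoint-circuits count.

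One further warning sign against the clean exponential tail $\P(O_k\geq j)\leq Ce^{-cj}$: if that estimate were in hand, the Gamma-integral step you carry out would already deliver $\E(O_k^t)\leq(c_1t)^t$, and there would be little reason for \cite{DS} to state the weaker $(c_1t)^{3t}$. The exponent $3$ is more plausibly structural --- reflecting a tail estimate weaker than pure exponential --- than slack to be absorbed into the base.
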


One crucial feature of the invasion process that allows us to prove
limit theorems is its \textit{renewal structure}. To describe this, we
make a couple of definitions. For $k,m \geq1$ and $1 \leq l \leq
\infty$, let $\mathcal{G}(k,l,m)$ be the graph of the invasion
process that invades the entire box $B(2^{k-m})$ at step 1 [we take
$B(2^{k-m})$ to be the origin if $k < m$], then proceeds with the usual
invasion rules and stops when it invades any vertex of $\partial
B(2^{k+l+m})$. In the case that $l=\infty$, we allow the invasion to
run for all of time. Write $\mathcal{O}$ for the set of all outlets of
$\mathcal{S}$, and write $\mathcal{O}(k,l,m)$ for the set of all
outlets of $\mathcal{G}(k,l,m)$.
In the case of $\mathcal{O}(k,l,m)$, the outlets are defined in the
same way as in $\mathcal{O}$; however, note that if the graph
$\mathcal{G}(k,l,m)$ is finite (which corresponds to the case of
finite $l$), some of its outlets may have weight below $p_c$.

For the next theorem, when $l=\infty$, $\operatorname{Ann}(m,l)$ will mean $B(m)^c$.
%
\begin{theorem}[(Renewal structure of the invasion)]\label{thm:renewal}
There are constants $C<\infty$ and $\delta>0$ such that for all $k,m
\geq1$ and $1 \leq l \leq\infty$,
\[
\P\bigl(\mathcal{S} \cap \operatorname{Ann}(2^k,2^{k+l}) \neq\mathcal{G}(k,l,m)\cap
\operatorname{Ann}(2^k,2^{k+l})\bigr) < C\exp(-\delta m)
\]
and
\[
\P\bigl(\mathcal{O} \cap \operatorname{Ann}(2^k,2^{k+l}) \neq\mathcal{O}(k,l,m)\cap
\operatorname{Ann}(2^k,2^{k+l})\bigr) < C\exp(-\delta m).
\]
\end{theorem}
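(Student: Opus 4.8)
The plan is to reduce both statements to a deterministic comparison that holds whenever a certain "good" configuration of circuits is present, and then to use the exponential decay of closed dual circuits from \eqref{eqExpDecay} to bound the probability of the bad event. The key observation is that the invasion dynamics on an annulus $Ann(2^k,2^{k+l})$ are determined by the edge weights in that annulus together with the "entry data'' — namely, which vertices on the inner boundary $\partial B(2^k)$ have already been invaded and in what order their connecting paths carry weight. Both ${\cal S}$ and ${\cal G}(k,l,m)$ run the same local invasion rule; they can only differ inside the annulus if their histories up to first reaching $\partial B(2^k)$ differ in a way that matters. The role of the parameter $m$ is to insert a buffer zone $Ann(2^{k-m},2^k)$ (respectively $Ann(2^{k+l},2^{k+l+m})$) in which a $p$-closed dual circuit, for a well-chosen $p$ slightly above $p_c$, shields the annulus from the invasion's past and future.

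First I would set up the shielding event. Fix $p$ with $L(p)$ comparable to $2^{k-1}$, or rather choose the scale so that $p = p_{2^{k-m+1}}$-type quantities appear; the point is that on $Ann(2^{k-m},2^{k-m+1})$ there is, with probability bounded below, a $p$-open circuit, and more importantly (by \eqref{eqExpDecay}) the event that \emph{no} $p$-closed dual circuit surrounds $B(2^{k-1})$ at radius $\geq 2^{k-m}$ fails with probability at most $D_3\exp(-D_4 \cdot 2^{k-m}/L(p))$. I actually want the reverse: I want a $p$-closed dual circuit in $Ann(2^{k-m}, 2^k)$ to be rare-free, i.e. to be \emph{present}. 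Hmm — the correct formulation, following \cite{DS}, is that the invasion, once it has reached a $p$-open infinite cluster, never uses an edge of weight $> p$ again outside that cluster; so the structure of the invaded region inside $B(2^k)$ is eventually frozen relative to what happens further out, and a closed dual circuit at an intermediate scale localizes the "last outlet" inside it. I would invoke the renewal construction of \cite{DS} directly: there it is shown that with probability $\geq 1 - C\exp(-\delta m)$ the invasion's behavior in $B(2^{k+l+m})^c$-to-$B(2^{k-m})$ window decouples, and ${\cal G}(k,l,m)$ is precisely the "restarted'' invasion. Conditioned on that decoupling event, the two graphs agree on $Ann(2^k,2^{k+l})$ by construction, because ${\cal G}(k,l,m)$ is defined to fill $B(2^{k-m})$ and then run the honest rule, and the honest invasion ${\cal S}$ also fills all of $B(2^{k-m})$ before exiting the buffer with overwhelming probability, so the downstream evolution in the annulus sees identical boundary data.

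Then the second statement follows from the first. Outlets of ${\cal S}$ lying in $Ann(2^k,2^{k+l})$ are a measurable function of ${\cal S} \cap Ann(2^k, 2^{k+l})$ — an edge $e$ in the annulus is an outlet iff it is the maximal-weight edge along the invasion path from a certain vertex, and this local maximality, plus the pond membership, is readable off the restricted graph together with the knowledge (again provided by the buffer circuits) that the relevant pond boundary lies inside the annulus. One subtlety: outlets are defined via global maxima $\hat\tau_k = \max\{\tau_e : e \in E_\infty, e$ invaded after the previous outlet$\}$, so I must argue that the "matching'' of outlets between ${\cal S}$ and ${\cal G}(k,l,m)$ respects these global maxima; this is handled by noting that on the good event every outlet whose edge sits in the annulus has its defining pond entirely captured within $B(2^{k+l+m})$, so the relevant maximum is the same maximum in both processes. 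Hence on the complement of an event of probability $< C\exp(-\delta m)$ the outlet sets agree in the annulus.

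The main obstacle I expect is making the buffer argument quantitative with the \emph{single} constant $\delta$ uniform over all $k, m$ and $l$ (including $l = \infty$). The $l=\infty$ case is where \eqref{eqExpDecay} is essential: one needs that even for the infinite invasion, the event that the "last relevant outlet" (the global max over the tail) has its edge outside $B(2^{k-m})$ has probability decaying exponentially in $m$ uniformly in $k$ — this is exactly the content of the almost-sure upper bound $-\log(\hat\tau_n - p_c) \leq C_2 n$ from \cite{DS} combined with the correlation-length estimates, and I would cite the renewal lemma of \cite{DS} rather than re-derive it. The other delicate point is the inner buffer when $k < m$: there ${\cal G}(k,l,m)$ starts from the origin, $B(2^{k-m})$ being interpreted as a point, and one must check the decoupling estimate degrades gracefully; but since $\exp(-\delta m)$ is already tiny when $m > k$, a crude bound suffices there.
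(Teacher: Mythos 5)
Your high-level picture is right: shield the annulus with buffer events in $Ann(2^{k-m},2^k)$ and $Ann(2^{k+l},2^{k+l+m})$, then bound the failure of the shields exponentially in $m$ via RSW and \eqref{eqExpDecay}. But the proposal stops short of an actual argument in two ways that matter.

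First, the specific structure of the shields is not identified, and what you gesture at is partly wrong. For the inner buffer the paper uses a $p_c$-\emph{open} circuit in $Ann(2^{k-m},2^k)$: this gives (one direction of) the comparison, namely that in the exterior of that circuit ${\cal G}(k,l,m)\subseteq {\cal S}$. You oscillate between wanting a $p$-closed dual circuit there and wanting it absent ("rare-free, i.e.\ present"), and you also assert that ${\cal S}$ ``fills all of $B(2^{k-m})$ before exiting the buffer with overwhelming probability,'' which is neither needed nor true in general — the origin's invasion can exit $B(2^k)$ along a cheap path long before filling the small box. For the outer buffer the paper's mechanism is a conjunction of three events at carefully separated scales: a $p_{2^{k+l+m/4}}$-closed dual circuit in $Ann(2^{k+l},2^{k+l+m/4})^*$, a $p_c$-open circuit in $Ann(2^{k+l+m/2},2^{k+l+m})$, and a $p_{2^{k+l+m/4}}$-open connection from that circuit to infinity. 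Together these force the existence of a single edge $e$ in $Ann(2^{k+l},2^{k+l+m})$ that is an outlet of \emph{both} ${\cal S}$ and ${\cal G}(k,l,m)$ and is invaded by both before any vertex of $\partial B(2^{k+l+m})$; once $e$ is crossed, everything inside the closed circuit is frozen for both processes, which is exactly what makes the annulus agree and (simultaneously) what makes the outlet sets agree. Nothing in your proposal pins down this common-outlet step — you only say the window ``decouples'' and ${\cal G}$ is the ``restarted'' invasion.

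Second, at the crucial junctures you explicitly defer: ``I would invoke the renewal construction of \cite{DS} directly'' and ``I would cite the renewal lemma of \cite{DS} rather than re-derive it.'' The remark in the paper says only that \emph{similar ideas} appear in the proof of Theorem~1.4 of \cite{DS}; there is no off-the-shelf lemma there that delivers Theorem~\ref{thm:renewal}, and the present theorem is the new self-contained statement the authors then use. So the proposal as written is a plan to cite a result that does not exist in the form needed, rather than a proof. Finally, the case bookkeeping is also slightly off: when $k<m$ and $l=\infty$, ${\cal G}(k,\infty,m)$ starts from the origin and runs forever, so it \emph{equals} ${\cal S}$ and there is literally nothing to prove — not a matter of a ``crude bound suffices,'' as you suggest.
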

\begin{pf}
Clearly it suffices to prove the theorem for $m>4$. We first consider
the case that $k \geq m$ and $l<\infty$. Observe that $\mathcal{S}
\cap \operatorname{Ann}(2^k,2^{k+l}) = \mathcal{G}(k, l,m)\cap \operatorname{Ann}(2^k,2^{k+l})$ and
$\mathcal{O} \cap \operatorname{Ann}(2^k,2^{k+l}) = \mathcal{O}(k,l,m)\cap
\operatorname{Ann}(2^k,2^{k+l})$ if (1) there exists a $p_c$-open circuit\vspace*{2pt} around the
origin in $\operatorname{Ann}(2^{k-m},2^k)$,
(2)~there exists a $p_{2^{k+l+m/4}}$-closed dual circuit around the
origin in the annulus $\operatorname{Ann}(2^{k+l},2^{k+l+m/4})^*$,
(3) there exists a $p_c$-open circuit\vspace*{1pt} around the origin in
$\operatorname{Ann}(2^{k+l+m/2},2^{k+l+m})$
and (4) the open circuit from (3) is connected by a\vadjust{\goodbreak}
$p_{2^{k+l+m/4}}$-open path to infinity. (See Figure \ref{fig:renewal}
for an illustration of the intersection of these four events.)
Indeed, the first condition implies that in the exterior of the
$p_c$-open circuit from (1), $\mathcal G(k,l,m)$ is a subset of
$\mathcal S$.
The remaining conditions (2)--(4) imply the existence of an edge $e$ in
$\operatorname{Ann}(2^{k+l},2^{k+l+m})$, lying in the closure of the exterior of the
closed circuit from (2), such that $e \in\mathcal{O} \cap\mathcal
{O}(k,l,m)$, and both invasion processes invade $e$ before any vertex
of $\partial B(2^{k+l+m})$.
Therefore, once this outlet is invaded (by either of the two invasion
processes), the set of invaded edges in the interior of the closed
circuit from (2) does not change anymore. The RSW theorem and (\ref
{eqExpDecay}) imply that the probability that any of (1)--(4) does not
hold is bounded from above by $C \exp(-\delta m)$ uniformly in $k$.

%
%
\begin{figure}

\includegraphics{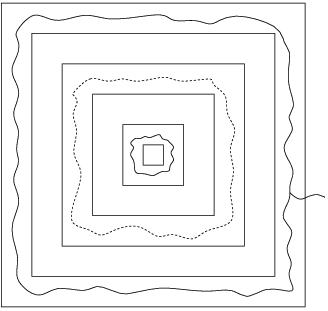}

\caption{The event in the proof of
Theorem \protect\ref{thm:renewal} (in the case $k \geq m$ and
$l<\infty$). The
boxes, in order from smallest to largest, are $B(2^{k-m})$, $B(2^k)$,
$B(2^{k+l})$, $B(2^{k+l+m/4})$, $B(2^{k+l+m/2})$ and $B(2^{k+l+m})$.
(Boxes are not drawn to scale.) The dotted path is
$p_{2^{k+l+m/4}}$-closed, the path to infinity is
$p_{2^{k+l+m/4}}$-open and the other two circuits are $p_c$-open. If
all these paths exist, the sets ${\mathcal S}$ and ${\mathcal
G}(k,l,m)$ coincide in $\operatorname{Ann}(2^k,2^{k+l})$.} \label{fig:renewal}
\end{figure}
%

In the case that $k < m$ and $l<\infty$, we exclude condition (1) from
the above argument. In the case $k < m$ and $l=\infty$ there is
nothing to prove. If $k \geq m$ and $l=\infty$ we argue using only
condition (1).
\end{pf}
\begin{remark}
Similar ideas were used in the proof of the upper bound in Theorem 1.4
in \cite{DS}. Note that there is
a typo there in the definition of $X^n_i$. It should be specified that
$X^n_i$ counts only disconnecting edges with weights larger than $p_c$.
\end{remark}

We now present corollaries of Theorem \ref{thm:renewal} that will help
in the proofs of the next section. The first two are about mixing
properties of the sequence $(X_k)$. Recall the notation that $a_k = \E
O_k$ and let $X_k = O_k - a_k$. For any \mbox{$m_1 \leq m_2$}, let $\Sigma
_{m_1}^{m_2}$ be the sigma algebra generated by the variables $X_{m_1},
\ldots, X_{m_2}$. Write~$\Sigma_{m_1}$ for $\lim_{m_2 \to\infty}
\Sigma_{m_1}^{m_2}$ and $\Sigma^{m_2}$ for $\Sigma_1^{m_2}$. For $m
\geq0$, define the \textit{strong mixing coefficient}
%
%
\begin{equation}\label{eq: mixingdef}
\alpha(m) = {\sup_{k \geq1}} \sup_{A,B} |\P(A \cap B) - \P(A)\P
(B)|,
\end{equation}
where the supremum is over all $A\in\Sigma^k$ and $B \in\Sigma_{k+m}$.
\begin{corollary}\label{cor:mixing}
There exist constants $C<\infty$ and $\delta>0$ such that for all~$m$,
%
%
\begin{equation}\label{eq: mixingeq1}
\alpha(m) \leq C \exp(- \delta m).
\end{equation}
\end{corollary}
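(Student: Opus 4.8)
The plan is to read the strong-mixing bound off the renewal estimate of Theorem~\ref{thm:renewal}: I will approximate the generators of $\Sigma^k$ by functionals of the invasion that are measurable with respect to the edge weights inside a box $B(2^{k+q})$, approximate the generators of $\Sigma_{k+m}$ by functionals that are measurable with respect to the edge weights \emph{outside} a box $B(2^{k+m-1-q})$, choose $q$ so that these two boxes are nested (so that the two families of weights are independent), and then pay only the exponentially small renewal error for each approximation.

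Here are the steps. Fix $m\ge 3$ and put $q=\lfloor (m-1)/2\rfloor\ge 1$ (for $m\le 2$ the trivial bound $\alpha(m)\le 1$ suffices and is absorbed into the constants). For the inner block I apply Theorem~\ref{thm:renewal} with parameters $(0,k,q)$; the renewal estimate is valid at the endpoint value ``$k$''$=0$ because there ${\cal G}(0,k,q)$ is simply ${\cal S}$ stopped the first time it reaches $\partial B(2^{k+q})$, which is precisely the ``$k<m$'' case treated in the proof of that theorem, where condition~(1) is dropped. This produces an event $E_1$ with $\P(E_1^c)\le Ce^{-\delta q}$ on which the outlet counts $O_1,\dots,O_k$ (the annuli $Ann(2^{j-1},2^j)$, $1\le j\le k$, tile $Ann(1,2^k)$) coincide with the counts $O_1',\dots,O_k'$ of the outlets of ${\cal G}(0,k,q)$ in the same annuli; and since the invasion rule defining ${\cal G}(0,k,q)$ never inspects the weight of an edge having an endpoint outside $B(2^{k+q})$, every $O_j'$ is measurable with respect to ${\cal F}_{\mathrm{in}}$, the $\sigma$-algebra generated by $\{\tau_e:\ e\text{ has both endpoints in }B(2^{k+q})\}$. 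Symmetrically, applying Theorem~\ref{thm:renewal} with parameters $(k+m-1,\infty,q)$ produces an event $E_2$ with $\P(E_2^c)\le Ce^{-\delta q}$ on which the counts $O_j$, $j\ge k+m$ (whose annuli tile $B(2^{k+m-1})^c$) coincide with the counts $O_j''$ of the outlets of ${\cal G}(k+m-1,\infty,q)$; since that process fills $B(2^{k+m-1-q})$ at step~$1$ and afterwards only inspects edges having an endpoint outside $B(2^{k+m-1-q})$, every $O_j''$ is measurable with respect to ${\cal F}_{\mathrm{out}}$, the $\sigma$-algebra generated by $\{\tau_e:\ e\text{ has an endpoint outside }B(2^{k+m-1-q})\}$. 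Because $2q\le m-1$ we have $B(2^{k+q})\subseteq B(2^{k+m-1-q})$, so ${\cal F}_{\mathrm{in}}$ and ${\cal F}_{\mathrm{out}}$ are generated by disjoint sets of edge weights and are therefore independent.

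The decoupling estimate is then routine. Given $A\in\Sigma^k=\sigma(O_1,\dots,O_k)$ and $B\in\Sigma_{k+m}=\sigma(O_{k+m},O_{k+m+1},\dots)$, write $A=\{(O_1,\dots,O_k)\in S\}$ and $B=\{(O_j)_{j\ge k+m}\in S'\}$ for Borel sets $S,S'$, and set $A'=\{(O_1',\dots,O_k')\in S\}\in{\cal F}_{\mathrm{in}}$ and $B'=\{(O_j'')_{j\ge k+m}\in S'\}\in{\cal F}_{\mathrm{out}}$ (this is legitimate for $B'$, since all the $O_j''$ are simultaneously ${\cal F}_{\mathrm{out}}$-measurable). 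On $E_1$ the indicators $I_A$ and $I_{A'}$ agree and on $E_2$ the indicators $I_B$ and $I_{B'}$ agree, so $\P(A\triangle A')\le\P(E_1^c)$ and $\P(B\triangle B')\le\P(E_2^c)$. Hence
\[
|\P(A\cap B)-\P(A)\P(B)|\le |\P(A\cap B)-\P(A'\cap B')|+|\P(A'\cap B')-\P(A')\P(B')|+|\P(A')\P(B')-\P(A)\P(B)|,
\]
where the middle term vanishes by independence of ${\cal F}_{\mathrm{in}}$ and ${\cal F}_{\mathrm{out}}$ and each of the other two is at most $\P(A\triangle A')+\P(B\triangle B')\le \P(E_1^c)+\P(E_2^c)$. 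Taking the supremum over $A$, $B$ and over $k\ge 1$ (all estimates are uniform in $k$) gives $\alpha(m)\le 4Ce^{-\delta q}\le C'e^{-\delta' m}$.

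The only genuine work is in the second paragraph: verifying that ${\cal G}(0,k,q)$ and ${\cal G}(k+m-1,\infty,q)$ are measurable with respect to the claimed, disjoint sets of edge weights (one must track exactly which edges the invasion rule ever examines, using that all edges internal to the initially invaded box are irrelevant), and confirming that the renewal estimate really does apply at the endpoint parameter ``$k$''$=0$. The remaining ingredients — the tiling of the dyadic annuli, the symmetric-difference bounds, and the three-term triangle inequality — are mechanical.
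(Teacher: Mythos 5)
Your proof is essentially the paper's own argument: approximate the inner block $(X_1,\dots,X_k)$ by outlet counts of the stopped invasion ${\cal G}(0,k,\cdot)$, approximate the outer block $(X_{k+m},\dots)$ by outlet counts of ${\cal G}(k+m-1,\infty,\cdot)$, invoke Theorem~\ref{thm:renewal} to pay an exponentially small symmetric-difference error, exploit the independence of the two approximating invasions, and finish with a three-term triangle inequality. The paper's good event $A_{k,m}$ plays the role of your $E_1\cap E_2$, and its events $A_Y,B_Y$ are your $A',B'$.

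There are, however, two interrelated inaccuracies in the step you correctly flag as the only genuine work, and they combine to produce a real (though easily repaired) gap. First, the invasion is \emph{not} a functional of the weights of edges having both endpoints in the box: at each step the invasion rule examines the weight of every edge in $\Delta G_i$, and such an edge may have one endpoint on the current sphere of invasion and one endpoint just outside it. The correct measurability statement is that ${\cal G}(0,k,q)$ is a functional of $\{\tau_e:\ e \text{ has at least one endpoint in }B(2^{k+q})\}$, and symmetrically that ${\cal G}(k+m-1,\infty,q)$ is a functional of $\{\tau_e:\ e\text{ has at least one endpoint outside }B(2^{k+m-1-q})\}$. Second, with your choice $q=\lfloor (m-1)/2\rfloor$, when $m$ is odd you get $q=m-1-q$, so $B(2^{k+q})=B(2^{k+m-1-q})$ and the two generating families both contain the edges of $\partial B(2^{k+q})$: they are not disjoint, and the asserted independence fails. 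You need the inner stopping box to lie \emph{strictly} inside the outer starting box (with a gap of at least one lattice spacing, and in practice one dyadic scale); the paper's choice of renewal parameter $\lfloor m/2\rfloor-1$ for both invocations of Theorem~\ref{thm:renewal} makes the inner box $B(2^{k+\lfloor m/2\rfloor-1})$ and the outer box $B(2^{k+m-\lfloor m/2\rfloor})$, which are strictly nested for all $m>4$. Replace your $q$ by $\lfloor m/2\rfloor-1$ (or, equivalently, require $q\le \lfloor m/2\rfloor-1$) and fix the measurability statement as above, and the argument is complete.
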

\begin{pf}
Clearly it suffices to prove the corollary for $m>4$. Fix $k\geq1$ and
let $A \in\Sigma^k$, $B\in\Sigma_{k+m}$. For $j=1, \ldots, k$, let
$\tilde Y_j$ be the number of outlets in $\mathcal{O}(0,k,\lfloor
m/2 \rfloor-1) \cap \operatorname{Ann}(2^{j-1},2^j)$ with weight $>p_c$, and for
$j\geq k+m$, let~$\tilde Y_j$ be the number of outlets in $\mathcal
{O}(k+m-1,\infty, \lfloor m/2 \rfloor-1) \cap \operatorname{Ann}(2^{j-1},2^j)$ with
weight $>p_c$. Let $Y_j = \tilde Y_j - a_j$. By Theorem \ref
{thm:renewal}, there exist constants $C_1<\infty$ and $\delta_1>0$
such that for all $k \geq1$, $m > 4$,
\[
\P(A_{k,m}) \geq1 - C_1 \exp(-\delta_1 m),
\]
where $A_{k,m}$ is the event that $X_j = Y_j$ for all $j \leq k$ and
for all $j \geq k+m$.

Because $A \in\Sigma^k$, there exists a Borel set $A' \subset\R^k$
such that $A$ is the event that $(X_1, \ldots, X_k) \in A'$.
Similarly, because $B \in\Sigma_{k+m}$, there exists a Borel set $B'
\subset\R^\infty$ (with the product topology) such that $B$ is the
event that $(X_{k+m}, \ldots) \in B'$. Define $A_Y$ as the event that
$(Y_1, \ldots, Y_k) \in A'$ and $B_Y$ as the event that
$(Y_{k+m},\ldots) \in B'$. Because $A_Y$ and $B_Y$ are independent,
%
%
\begin{equation}\label{eq:indep}
| \P(A_Y\cap B_Y) - \P(A_Y)\P(B_Y) | = 0.
\end{equation}
Also, when $A_{k,m}$ occurs, the events $A$ and $A_Y$ (resp., $B$ and
$B_Y$) are identical, so
\[
| \P(A \cap B) - \P(A_Y \cap B_Y) | \leq\P(A_{k,m}^c) \leq C_1 \exp
(-\delta_1 m)
\]
and
\begin{eqnarray*}
| \P(A)\P(B) - \P(A_Y)\P(B_Y)| &\leq& \P(A)|\P(B)-\P(B_Y)| \\
&&{} +  \P(B_Y)|\P(A)-\P(A_Y)| \\
&\leq& |\P(B)-\P(B_Y)| + |\P(A)-\P(A_Y)| \\
&\leq& 2C_1 \exp(-\delta_1 m).
\end{eqnarray*}
Combining the two above inequalities with (\ref{eq:indep}) gives the corollary.
\end{pf}

Now that we have a bound on the decay of the sequence $(\alpha(m))$,
we can relate this to the decay of covariances using the following
classical result.
\begin{corollary}[(\cite{Davydov}, (2.2))]\label{cor:cov}
Let $k,m \geq1$ and let $f$ and $g$ be functions such that $f$ is
$\Sigma^k$-measurable and $g$ is $\Sigma_{k+m}$-measurable. Suppose
that $1/p + 1/q < 1$ and that the moments $\E|f|^p$ and $\E|g|^q$
exist. Then
%
%
\begin{equation}\label{eq: covariance}
| \E fg - \E f \E g| \leq12 [\E|f|^p]^{1/p} [\E
|g|^q]^{1/q} [\alpha(m)]^{1-1/p-1/q}.
\end{equation}
\end{corollary}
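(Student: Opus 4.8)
This is Davydov's covariance inequality; in the body of the paper we simply quote \cite[(2.2)]{Davydov}, but it is worth recording how the bound is obtained. The plan is to combine two ingredients: a covariance estimate for bounded random variables, and a truncation step that removes the boundedness. Throughout one uses that a Borel function of $f$ is again $\Sigma^k$-measurable and a Borel function of $g$ is again $\Sigma_{k+m}$-measurable, so that in particular the truncations of $f$ and $g$ introduced below are measurable with respect to the appropriate $\sigma$-algebras and the definition~\eqref{eq: mixingdef} of $\alpha(m)$ applies to them.

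\emph{Step 1 (bounded variables).} First I would prove that whenever $\tilde f$ is $\Sigma^k$-measurable with $|\tilde f|\le M$ and $\tilde g$ is $\Sigma_{k+m}$-measurable with $|\tilde g|\le N$, then $|\E\tilde f\tilde g-\E\tilde f\,\E\tilde g|\le 4MN\,\alpha(m)$. The tool is the classical ``sign trick''. Put $h=\E[\tilde g\mid\Sigma^k]-\E\tilde g$; since $\tilde f$ is $\Sigma^k$-measurable, $\E\tilde f\tilde g-\E\tilde f\,\E\tilde g=\E[\tilde f\,h]$, so the left side is at most $M\,\E|h|$, and $\E|h|=\E[\mathrm{sgn}(h)\,\tilde g]-\E[\mathrm{sgn}(h)]\,\E\tilde g$ is again a covariance, now between the $\Sigma^k$-measurable function $\mathrm{sgn}(h)$ (bounded by $1$) and $\tilde g$. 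Repeating the device with the roles of the two $\sigma$-algebras exchanged replaces $\tilde g$ by a $\Sigma_{k+m}$-measurable, $1$-bounded function; expanding the two $1$-bounded functions as differences of indicators of events in $\Sigma^k$ and in $\Sigma_{k+m}$ leaves at most four quantities $|\P(A\cap B)-\P(A)\P(B)|\le\alpha(m)$, and carrying the factors $M$, $N$ through gives the claim.

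\emph{Step 2 (truncation).} By homogeneity assume $\E|f|^p=\E|g|^q=1$; if $\alpha(m)=0$ the $\sigma$-algebras $\Sigma^k$ and $\Sigma_{k+m}$ are independent and the inequality is trivial, so assume $\alpha(m)>0$. Set $M=\alpha(m)^{-1/p}$, $N=\alpha(m)^{-1/q}$, and write $f=f'+f''$, $g=g'+g''$ with $f'=(f\wedge M)\vee(-M)$ and $g'=(g\wedge N)\vee(-N)$. Expanding, $\E fg-\E f\,\E g$ becomes a sum of four covariances. For the $(f',g')$ term, Step~1 gives the bound $4MN\,\alpha(m)=4\,\alpha(m)^{1-1/p-1/q}$. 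For the three terms containing a tail I would use the Markov-type estimates $\E[|f|\,I(|f|>M)]\le M^{1-p}$ and $\P(|f|>M)\le M^{-p}=\alpha(m)$ together with their $g$-analogues; for the term carrying both tails one applies H\"older's inequality with three exponents $p,q,r$ satisfying $1/p+1/q+1/r=1$ to $\E[\,|f|\,|g|\,I(|f|>M)\,I(|g|>N)\,]$, which is allowed precisely because $1/p+1/q<1$. A direct computation then bounds each of these three terms by a constant multiple of $\alpha(m)^{1-1/p-1/q}$ as well; summing the finitely many constants and undoing the normalization (which reintroduces the factor $[\E|f|^p]^{1/p}[\E|g|^q]^{1/q}$) gives \eqref{eq: covariance} with a numerical constant, which the reference pins down as $12$.

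The only step that takes any care is the truncation in Step~2: the choice $M=\alpha(m)^{-1/p}$, $N=\alpha(m)^{-1/q}$ is exactly what forces all four pieces onto the common power $\alpha(m)^{1-1/p-1/q}$, and after that one must keep track of the constants. Step~1 and the measurability bookkeeping are routine, which is why in the paper we simply refer to \cite[(2.2)]{Davydov}.
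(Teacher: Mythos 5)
Your Step~1 matches Lemma~\ref{lem: covlem1} in the appendix essentially verbatim, and Step~2 reaches the same bound, but the truncation is organized differently from the paper's. The paper proceeds in two stages: it first proves Lemma~\ref{lem: covlem2}, which handles the case where $g$ is bounded and $f\in L^p$ by truncating only $f$ (via $f_N=f\,I[|f|<N]$ with $N=[\E|f|^p]^{1/p}\alpha(m)^{-1/p}$), and then in the proof of the corollary it truncates $g$ at level $C=[\E|g|^q]^{1/q}\alpha(m)^{-1/q}$ and feeds the pair $(f,g_C)$ into Lemma~\ref{lem: covlem2}. The sequential structure means the paper never has to estimate a ``doubly-unbounded'' cross term $\E f''g''$; the three-exponent H\"older step appears only once, when peeling off the tail of $g$. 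You instead clip both variables simultaneously and expand $\E fg-\E f\,\E g$ into four covariances, which forces you to treat the $(f'',g'')$ term directly with H\"older on the triple $(p,q,r)$, $1/p+1/q+1/r=1$ -- this is where the hypothesis $1/p+1/q<1$ enters for you, whereas in the paper it enters when choosing the auxiliary exponent for the single tail estimate. Both routes are correct and hinge on the same balancing choice of truncation levels $M\sim\alpha^{-1/p}$, $N\sim\alpha^{-1/q}$; the paper's staged version is slightly more economical in bookkeeping (two terms per stage, each stage reusing Lemma~\ref{lem: covlem1} or~\ref{lem: covlem2} as a black box), while your one-shot decomposition is more symmetric in $f$ and $g$ and makes the role of the three-exponent H\"older inequality, and hence of the strict inequality $1/p+1/q<1$, more visible.
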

\begin{pf}
For completeness, we will outline the proof in the \hyperref[app]{Appendix}.
\end{pf}


Corollaries \ref{cor:mixing} and \ref{cor:cov} tell us that the
variables $(X_k)$ are very weakly dependent. This is one main
ingredient for proving the CLT and SLLN for this sequence. In the first
part of the following corollary, we will bound moments of the sums
$(\sum_{k=1}^n X_k)_n$. This is the second main ingredient necessary
for proving the CLT. The second part of the corollary will control
fluctuations of the sums and will be useful in proving the SLLN.
\begin{corollary}\label{cor:momentmixing}
The following statements hold.
\begin{longlist}[(2)]
\item[(1)] For each $0 \leq t \leq4$, there exists $D(t)<\infty$ such that
for all $k \geq1$ and $m \geq0$,
\[
\E\Biggl| \sum_{j=k}^{k+m} X_j \Biggr|^t \leq D(t) m^{t/2}.
\]
\item[(2)] There exists $C<\infty$ such that for any $\lambda>0$ and $n
\geq1$,
\[
\P\Biggl( \max_{1 \leq i \leq n} \Biggl| \sum_{k=1}^i X_k \Biggr|
\geq\lambda\Biggr) \leq\frac{Cn}{\lambda^2} + \frac{C\sqrt
n}{\lambda}.
\]
\end{longlist}
\end{corollary}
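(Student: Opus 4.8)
The plan is to establish part (1) first, where the case $t=4$ carries all the content, and then to derive part (2) from a maximal inequality built on that bound.

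\emph{Part (1).} Since $x\mapsto x^{t/4}$ is concave on $[0,\infty)$ for $0\le t\le 4$, Jensen's inequality gives $\E|S|^{t}\le(\E|S|^{4})^{t/4}$ for $S=\sum_{j=k}^{k+m}X_{j}$, so it suffices to prove $\E S^{4}\le D(1+m)^{2}$ (which yields the stated bound). Expand $\E S^{4}=\sum_{i_{1},i_{2},i_{3},i_{4}}\E X_{i_{1}}X_{i_{2}}X_{i_{3}}X_{i_{4}}$, relabel each summand so its indices are sorted as $j_{1}\le j_{2}\le j_{3}\le j_{4}$, and set $g_{1}=j_{2}-j_{1}$, $g_{2}=j_{3}-j_{2}$, $g_{3}=j_{4}-j_{3}$. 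Throughout one uses Theorem~\ref{outletmomentthm}, which makes $\sup_{j}\E|X_{j}|^{s}$ finite for every $s$ and hence, via H\"older and Cauchy--Schwarz, gives finite moments of every product of four of the $X_{j}$; Corollary~\ref{cor:cov} with $p=q=4$; and Corollary~\ref{cor:mixing}, $\alpha(m)\le Ce^{-\delta m}$. When $g_{1}$ is the largest gap, writing (using $\E X_{j_{1}}=0$)
\[
\E X_{j_{1}}X_{j_{2}}X_{j_{3}}X_{j_{4}}=\E X_{j_{1}}X_{j_{2}}X_{j_{3}}X_{j_{4}}-\E X_{j_{1}}\cdot\E X_{j_{2}}X_{j_{3}}X_{j_{4}}
\]
and applying Corollary~\ref{cor:cov} across the gap $g_{1}$ bounds this term by $C\alpha(g_{1})^{1/2}\le C'e^{-\delta g_{1}/2}$; since at most $(1+m)(1+g_{1})^{2}$ sorted tuples share a given value of $g_{1}$ (because $g_{2},g_{3}\le g_{1}$), these terms sum to $O(1+m)$, and the case ``$g_{3}$ largest'' is symmetric. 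When $g_{2}$ is largest one splits instead as $\E X_{j_{1}}X_{j_{2}}X_{j_{3}}X_{j_{4}}=\E(X_{j_{1}}X_{j_{2}})\,\E(X_{j_{3}}X_{j_{4}})+O(\alpha(g_{2})^{1/2})$; the error terms again sum to $O(1+m)$, while the main terms, bounded over all sorted tuples by $\big(\sum_{j\le j'}|\E X_{j}X_{j'}|\big)^{2}\le\big(C(1+m)\big)^{2}$ (using that $\sum_{g\ge0}|\E X_{j}X_{j+g}|$ is bounded uniformly in $j$), contribute the quadratic term. Coincidences among the $j_{i}$ are handled directly: a block of equal indices is bounded by Cauchy--Schwarz, and an isolated index is peeled off via $\E X_{j}=0$ exactly as above. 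Adding the contributions gives $\E S^{4}\le D(1+m)^{2}$.

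\emph{Part (2).} With $S_{i}=\sum_{k=1}^{i}X_{k}$, the key step is to upgrade part (1) with $t=4$ to the maximal bound $\E\max_{1\le i\le n}|S_{i}|^{4}\le Cn^{2}$, which I would prove by dyadic blocking (a M\'oricz-type argument). Assume $n=2^{r}$ (otherwise embed $\{1,\dots,n\}$ into $\{1,\dots,2^{\lceil\log n\rceil}\}$). For $0\le s\le r$ set $M_{s}=\max_{0\le a<2^{r-s}}\big|\sum_{j=a2^{s}+1}^{(a+1)2^{s}}X_{j}\big|$; part (1) gives $\E M_{s}^{4}\le\sum_{a}D(4)(2^{s})^{2}=D(4)\,2^{r+s}$. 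Since $(0,k]$ is a disjoint union of at most one dyadic interval at each scale, $\max_{1\le i\le n}|S_{i}|\le\sum_{s=0}^{r}M_{s}$, so by the triangle inequality in $L^{4}$,
\[
\big\|\max_{1\le i\le n}|S_{i}|\big\|_{4}\le\sum_{s=0}^{r}\big(D(4)\,2^{r+s}\big)^{1/4}\le C\,2^{r/2}=C\sqrt{n},
\]
i.e. $\E\max_{i\le n}|S_{i}|^{4}\le Cn^{2}$. Markov's inequality then gives $\P(\max_{1\le i\le n}|S_{i}|\ge\lambda)\le Cn^{2}/\lambda^{4}$, and combining with the trivial bound $\le1$: if $\sqrt{n}/\lambda\ge1$ then $Cn^{2}/\lambda^{4}$ is controlled by $1\le C\sqrt{n}/\lambda$, while if $\sqrt{n}/\lambda<1$ then $Cn^{2}/\lambda^{4}\le Cn/\lambda^{2}$; either way $\P(\max_{1\le i\le n}|S_{i}|\ge\lambda)\le C(n/\lambda^{2}+\sqrt{n}/\lambda)$.

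The main obstacle is the fourth-moment computation in part (1): one must organize the expansion of $\E S^{4}$ so that the exponential mixing from Corollary~\ref{cor:mixing} absorbs every index configuration except the genuine pair--pair contribution of size $O((1+m)^{2})$, and one must dispose of all index coincidences. This is bookkeeping rather than a conceptual difficulty; once it is done, part (2) follows from the standard dyadic maximal inequality together with elementary estimates.
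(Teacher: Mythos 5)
Your proposal is correct, but it proves both parts by hand rather than invoking the specialized results the paper relies on. For part (1) the paper cites Withers' Proposition~2.2 (reproduced as Lemma~\ref{lem: withers22}), whose hypothesis is verified by establishing exactly the covariance estimate $|\E X_{i_1}\cdots X_{i_4}|\le C\alpha(m)^{1/4}$; your version carries out the fourth-moment expansion from scratch, organizing sorted index tuples by the largest gap, peeling an isolated factor with $\E X_j=0$ and Corollary~\ref{cor:cov}, and extracting the pair--pair main term of order $(1+m)^2$. For part (2) the paper follows Davydov's Lemma~2.2: it runs a Kolmogorov-type maximal-inequality argument, introducing first-passage events $A_n^i$ and estimating the cross terms $\E[I[A_n^i]X_i(S_n-S_i)]$ and $\E[I[A_n^i]S_{i-1}X_j]$ directly via the mixing covariance bound, which produces the $Cn/\lambda^2+C\sqrt n/\lambda$ form naturally. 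You instead upgrade the $t=4$ bound to a maximal fourth-moment bound $\E\max_{i\le n}|S_i|^4\le Cn^2$ by a M\'oricz dyadic-blocking argument and then apply Markov plus a split on $\lambda\lessgtr\sqrt n$; this gives a bound at least as strong, and the case split recovers the stated form. Your route is more self-contained (no appeal to Withers or Davydov), while the paper's is shorter because it outsources the bookkeeping. One small point: as you note, the bound in part (1) should really read $(m+1)^{t/2}$ (the paper's $m^{t/2}$ degenerates at $m=0$), and your proof correctly produces the $(1+m)$ version.
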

\begin{pf}
We will begin with the proof of the first statement. It suffices to
consider $t=4$ because for $t<4$ we can use Jensen's inequality to
reduce to this case. The statement will follow from Proposition 2.2 of
\cite{Withers}, which we state below as Lemma \ref{lem: withers22}.
For the statement, we need some definitions. For $0\leq k < n$, define
\[
c^{(13)}(k,n) = \max_{1 \leq x_1, x_2 = x_1+k \leq x_3\leq x_4\leq n}
\E X_{x_1}X_{x_2}X_{x_3}X_{x_4}
\]
and
\[
c^{(31)}(k,n) = \max_{1 \leq x_1\leq x_2 \leq x_3, x_4=x_3+k\leq n} \E
X_{x_1}X_{x_2}X_{x_3}X_{x_4}.
\]
Also set
\[
c(k;1,3) = \sup_{n \geq k} \bigl[c^{(13)}(k,n)+c^{(31)}(k,n)\bigr].
\]
\begin{lemma}\label{lem: withers22}
Suppose that $\sup_{k\geq1} \E X_k^4 < \infty$ and
%
%
\begin{equation}\label{eq: witherscondition1}
\sum_{k=0}^m (k+1)c(k;1,3) = O(m^\gamma) \qquad\mbox{as } m \to\infty
\mbox{ for } \gamma\geq0.
\end{equation}
Then
\[
\sup_{b\geq0} \E(X_b + \cdots + X_{b+a})^4 = O(a^{2+\gamma}) \qquad\mbox{as } a \to\infty.
\]
\end{lemma}

We make the choice $\gamma=0$. The condition $\sup_{k \geq1} \E
X_k^4< \infty$ holds from Theorem \ref{outletmomentthm}. As for
(\ref{eq: witherscondition1}), it is not difficult to see that it will
hold as long as we show that there exist constants $C_1<\infty$ and
$\delta_1>0$ such that for any $m \geq1$ and for any natural numbers
$i_1, \ldots, i_4$ such that the distance from $i_1$ to the set $\{
i_2, i_3, i_4\}$ is at least equal to $m$,
%
%
\begin{equation}\label{eq: ourcondition1}
|\E X_{i_1} \cdots X_{i_4}| \leq C_1 \exp(-\delta_1 m).
\end{equation}
Condition (\ref{eq: ourcondition1}) holds by Corollary \ref{cor:cov}.
To show this, suppose that $i_1 \leq i_2 \leq i_3 \leq i_4$ (the other
cases are handled similarly). We make the choices $f=X_{i_1}$ and
$g=X_{i_2}X_{i_3}X_{i_4}$, with $p=2$ and $q=4$. From Theorem \ref
{outletmomentthm}, there exists $C_2$ such that for all $(i_j)$, both
$(\E g^4)^{1/4} \leq C_2$ and $(\E f^2)^{1/2} \leq C_2$. Since $\E f =
0$, Corollary \ref{cor:cov} gives
\[
|\E X_{i_1} \cdots X_{i_4} | \leq C_2^2 \alpha(m)^{1/4}.
\]
Bounding $\alpha(m)$ using Corollary \ref{cor:mixing} shows (\ref
{eq: ourcondition1}) and completes the proof of the first statement of
Corollary \ref{cor:momentmixing}.

We now prove the second statement. It is the same as the proof of~Lem\-ma~2.2
in \cite{Davydov}. Let $A_n$ be the event in the statement,
and write $S_n = \sum_{k=1}^n X_k$. Let
\[
A_n^i = \{ |S_j|< \lambda\mbox{ for } j = 1, \ldots, i-1 \mbox
{ but } |S_i| \geq\lambda\}.
\]
Similarly to the proof of Kolmogorov's maximal inequality for
independent random variables, one can show that
%
%
\begin{equation}\label{eq: kolm}
\P(A_n) \leq\frac{1}{\lambda^2} \Biggl( \E S_n^2 + 2 \sum_{i=1}^n
\E\bigl[ I[A_n^i] S_i(S_n-S_i) \bigr] \Biggr).
\end{equation}
By the first part of this corollary, $\E S_n^2 \leq C_3n$. Next, write
the summand as
\[
\E\bigl[ I[A_n^i] X_i(S_n-S_i) \bigr] + \sum_{j=i+1}^n\E[
I[A_n^i] S_{i-1}X_j].
\]
The absolute value of the first term is bounded by
\[
\Biggl| \sum_{k=i+1}^n \E[ I[A_n^i] X_iX_k] \Biggr|
\leq\sum_{k=i+1}^n C_4 [\alpha(k-i)]^{1/4} \leq C_5,
\]
where we use Corollary \ref{cor:cov} with $f=I[A_n^i]X_i$ and $g=X_k$,
with $p=2$ and $q=4$ (bounding the moments using Theorem \ref
{outletmomentthm}) in the first inequality. For the second term we also
use Corollary \ref{cor:cov} but choose $f=I[A_n^i] S_{i-1}$ and
\mbox{$g=X_j$}, with $p=2$ and $q=4$. This produces the bound
\begin{eqnarray*}
C_6 \sum_{j=i+1}^n (\E[S_{i-1}I[A_n^i]]^2)^{1/2} [\alpha(j-i)]^{1/4}
&\leq& C_6 \lambda\sqrt{\P(A_n^i)} \sum_{j=1}^\infty[\alpha
(j)]^{1/4} \\
&\leq& C_7 \lambda\sqrt{\P(A_n^i)}.
\end{eqnarray*}
Summing over $i$ and using Jensen's inequality with the square root
function (recalling that the events $A_n^i$ are disjoint in $i$), we
see that the sum in (\ref{eq: kolm}) is no bigger than
\[
2C_5 n + 2C_7\lambda n \Biggl( \frac{1}{n} \sum_{i=1}^n \sqrt{\P
(A_n^i)} \Biggr) \leq C_8n + C_9\lambda\sqrt n.
\]
Putting both this bound and the one on $\E S_n^2$ into (\ref{eq:
kolm}) finishes the proof.
\end{pf}

The following corollary shows a way to construct a sequence of $c\log
n$-dependent random variables $(\tilde O_k)$ related to $(O_k)$. We
will not use this sequence in the rest of the paper; however, the
proofs of the CLT and the SLLN given in Section \ref{sec:CLT} can be
replaced by ones that make reference to neither \cite{Davydov} nor
\cite{Withers} but that come from corresponding statements involving
independent random variables by using the $\tilde O_k$'s. An example of
such an approach is the proof of Theorem 1.4 in~\cite{DS}.
\begin{corollary}\label{cor:mndependent}
For any $\gamma>0$, there exists $c<\infty$ such that for all $n \geq
1$, defining $m_n = c\log n$, with probability at least $1 -
cn^{-\gamma}$, all
random variables $O_{m_n+1},\ldots,O_n$ are equal to some random
variables $\tilde
O_{m_n+1},\ldots,\tilde O_n$, which are $m_n$-dependent and satisfy
Theorem \ref{outletmomentthm}.
\end{corollary}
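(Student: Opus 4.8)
\medskip
\noindent\textbf{Proof proposal.}
The plan is to build, for each fixed $n$, a localized surrogate $\tilde O_j$ for $O_j$, $j = m_n+1,\dots,n$, out of a truncated invasion that only sees the edge weights in an annulus of logarithmic width around scale $2^j$. With $m_n = c\log n$ as in the statement (and $c$ to be chosen at the end), put $m_n' = \lfloor m_n/3\rfloor$ and let $\tilde O_j$ be the number of weight-$>p_c$ outlets of ${\cal G}(j-1,1,m_n')$ lying in $Ann(2^{j-1},2^j)$, i.e.\ $\tilde O_j = |\{\,e \in {\cal O}(j-1,1,m_n') \cap Ann(2^{j-1},2^j) : \tau_e > p_c\,\}|$. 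Since ${\cal G}(j-1,1,m_n')$ invades all of $B(2^{j-1-m_n'})$ at step $1$ and is stopped upon reaching $\partial B(2^{j+m_n'})$, it never examines an edge outside $B(2^{j+m_n'})$ nor one in the interior of $B(2^{j-1-m_n'})$, so $\tilde O_j$ depends only on the weights of the edges in $Ann(2^{j-1-m_n'},2^{j+m_n'})$ (up to finitely many boundary edges, which we accommodate by enlarging $m_n'$ by a fixed constant). For $n$ small the statement is vacuous, its probability bound being trivial, so we take $n$ large; then $j-1-m_n'>0$ for all $j>m_n$ and the construction makes sense. Moreover, when $|j-j'| > m_n$ the annuli $Ann(2^{j-1-m_n'},2^{j+m_n'})$ and $Ann(2^{j'-1-m_n'},2^{j'+m_n'})$ are disjoint — this needs only $|j-j'|\ge 2m_n'+2$, and $2m_n'+2\le m_n+1$ by the choice of $m_n'$ — so $(\tilde O_{m_n+1},\dots,\tilde O_n)$ is $m_n$-dependent.

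For the coincidence $\{O_j = \tilde O_j\}$, apply Theorem~\ref{thm:renewal} with $k = j-1$, $l = 1$, $m = m_n'$: the event that ${\cal O} \cap Ann(2^{j-1},2^j) \neq {\cal O}(j-1,1,m_n') \cap Ann(2^{j-1},2^j)$ has probability at most $C\exp(-\delta m_n')$, and on its complement the two outlet sets are equal; since every outlet of ${\cal S}$ has weight $>p_c$, this forces the outlets of ${\cal G}(j-1,1,m_n')$ in the annulus to have weight $>p_c$ as well, so $\tilde O_j = O_j$ there. As $m_n' \geq (c/4)\log n$ for $n$ large, choosing $c$ large enough makes $C\exp(-\delta m_n') \leq n^{-\gamma-1}$, and a union bound over the at most $n$ values of $j$ gives $\P(\exists j : \tilde O_j \neq O_j) \leq n^{-\gamma} \leq cn^{-\gamma}$ (after, if necessary, further enlarging $c$ to absorb the finitely many small $n$).

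It remains to check that the $\tilde O_j$ satisfy Theorem~\ref{outletmomentthm}, namely $\E\tilde O_j^t \leq (c_1 t)^{3t}$ for all $t\ge 1$, uniformly in $j$ and $m_n'$. The plan is to rerun the proof of Theorem~\ref{outletmomentthm} from \cite{DS}: that argument bounds the number of outlets inside the single width-one annulus $Ann(2^{k-1},2^k)$ by producing, for each additional outlet, one more nested $p_c$-closed dual circuit carrying an arm configuration, and bounding the probability of such a nested family. These events live in a bounded neighbourhood of the annulus and are equally forced by the weight-$>p_c$ outlets of ${\cal G}(j-1,1,m_n')$, since inside $B(2^{j+m_n'})$ that process obeys the usual invasion rules (started from a box) and each invaded edge of weight $>p_c$ sits just outside a $p_c$-closed dual circuit surrounding the current invaded region. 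I expect the main obstacle to lie exactly here: one must verify that truncating the invasion at $\partial B(2^{j+m_n'})$ does not manufacture uncontrollably many ``spurious'' weight-$>p_c$ outlets of ${\cal G}(j-1,1,m_n')$ inside $Ann(2^{j-1},2^j)$, which the nested-circuit estimate is precisely designed to rule out. Alternatively, as indicated in \cite{DS}, the moment bound for $\tilde O_j$ may be obtained in the same way as the moment bound used in the proof of Theorem~1.4 there.
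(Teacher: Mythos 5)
Your proof is essentially the paper's: both define $\tilde O_j$ as the count of weight-$>p_c$ outlets of the truncated invasion ${\cal O}(j-1,1,m')$ in $Ann(2^{j-1},2^j)$ with $m'$ of order $\log n$, invoke Theorem~\ref{thm:renewal} together with a union bound over $j$ to control the coincidence event, obtain $m_n$-dependence from disjointness of the annuli on which the $\tilde O_j$ depend, and defer the moment bound to the argument in \cite{DS}. The only cosmetic difference is your choice $m' = \lfloor m_n/3\rfloor$ versus the paper's $\lfloor m_n/2\rfloor - 1$, which is immaterial.
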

\begin{pf}
Let $c$ be an integer to be chosen later and let $k \geq c \log n$. We
define $\tilde O_k$ as the number of outlets in $\mathcal
{O}(k-1,1,\lfloor m_n/2 \rfloor-1) \cap \operatorname{Ann}(2^{k-1},2^k)$ with
$\mathrm{weight}>p_c$. The reader may verify that exactly the same argument used in
\cite{DS} for the proof of Theorem \ref{outletmomentthm} applies to
each~$\tilde O_k$. Also, the variables~$(\tilde O_k)$ are
obviously $m_n$-dependent. By Theorem \ref{thm:renewal}, there exist
$C<\infty$ and $\beta>0$ such that for any $k \geq c\log n$,
\[
{\mathbb P}(\tilde O_k\neq O_k) \leq C n^{-\beta},
\]
where $\beta\to\infty$ as $c \to\infty$. Therefore,
\[
{\mathbb P}(\tilde O_k\neq O_k \mbox{ for some } k\in[c\log n,n])
\leq Cn^{1-\beta}.
\]
\upqed\end{pf}

\section{CLT and SLLN for the outlets}\label{sec:CLT}

\subsection{\texorpdfstring{Proof of Theorem \protect\ref{thm:VarOn}}{Proof of Theorem 1}}
First we will show the statement about the~$a_k$'s. Theorem \ref
{outletmomentthm} implies the upper bound on $a_k$, so we need only
show the lower bound. The proof is similar to the first part of
Theorem 1.4 in \cite{DS}. For $k \geq1$, let $A_k$ be the event that
(a) there is a $p_{2^k}$-closed circuit around the origin in
$\operatorname{Ann}(2^{k-1},2^{k})$, (b) there is a $p_{2^k}$-open circuit in
$\operatorname{Ann}(2^{k-1},2^{k})$ and (c)~the circuit from (b) is connected by a
$p_{2^k}$-open path to infinity. By the RSW theorem and (\ref
{eqNguyen}), there exists $C_1>0$ such that for all $k$,
\[
\P(A_k) > C_1.
\]
But $A_k$ implies the event $\{ O_k \geq1\}$, so
\[
a_k = \E O_k \geq\P(A_k) \geq C_1.
\]

We move on to the statement about $b(n)$. The upper bound follows from
the case $t=2$ of the first statement in Corollary \ref
{cor:momentmixing}, so we will focus on the lower bound. Let $k$ be an
integer between 1 and $n$ and let $L_n := \log n$. For $i=1,\ldots, 5$
define $q_k(i) = p_c+ i(p_{2^k}-p_c)$. We define $A_{n,k}$ as the event that:
\begin{longlist}[(2)]
\item[(1)] there is an edge $e_1$ in $\operatorname{Ann}(2^{k+1},2^{k+2})$, with weight
between $q_k(4)$ and~$q_k(5)$, which is connected by a $p_c$-open path
to a $p_c$-open circuit around the origin that is in $\operatorname{Ann}(2^k,2^{k+1})$;
\item[(2)] the endpoints of $e_1^*$ are connected by a $q_k(5)$-closed dual
path in $\operatorname{Ann}(2^{k+1}$, $2^{k+2})^*$ such that the union of this path and
$e_1^*$ encloses the origin;
\item[(3)] there is an edge $e_2$ in $\operatorname{Ann}(2^{k+2},2^{k+3})$, with weight in
$[q_k(1),q_k(2)] \cup[q_k(3),q_k(4)]$, which is connected by a
$p_c$-open path to an endpoint of $e_1$;
\item[(4)] the endpoints of $e_2^*$ are connected by a $q_k(5)$-closed dual
path in $\operatorname{Ann}(2^{k+2}$, $2^{k+3})^*$ such that the union of this path and
$e_2^*$ encloses the origin;
\item[(5)] there is an edge $e_3$ in $\operatorname{Ann}(2^{k+3},2^{k+4})$ with weight in
$[q_k(2),q_k(3)]$, which is connected by a $p_c$-open path to an
endpoint of $e_2$;
\item[(6)] the endpoints of $e_3^*$ are connected by a $q_k(5)$-closed dual
path in $\operatorname{Ann}(2^{k+3}$, $2^{k+4})^*$ such that the union of this path and
$e_3^*$ encloses the origin;
\item[(7)] an endpoint of $e_3$ is connected by a $q_k(1)$-open path to
$\partial B(2^{k+L_n})$.
\end{longlist}
Notice that if $A_{n,k}$ occurs with edges $e_1 -e_3$, it cannot occur
with any other edges. It follows from \cite{DSV}, Lemma 6.3, and RSW
arguments (similar to the proof of \cite{DSV}, Corollary 6.2) that
there exists $C_2>0$ such that for any $n,k,$
%
%
\begin{equation}\label{eq:ank}
\P(A_{n,k}) \geq C_2.
\end{equation}
Since, in addition, the events $A_{n,k}$ are $L_n$-dependent for fixed
$n$, there exists $C_3>0$ such that
%
%
\begin{eqnarray}\label{eq: largedev}
\P(A_{n,3k} \mbox{ occurs for at least } C_3 n \mbox{ values of } k
\in[1,n/3]) \to1 \nonumber\\[-8pt]\\[-8pt]
&&\eqntext{\mbox{as } n \to\infty.}
\end{eqnarray}
To see this, we will give the proof in Theorem 1.4 of \cite{DS}. Let
$j$ be an integer between 1 and $L_n$, and define $B_i^j =
A_{n,3(j+iL_n)}$. Note that the events $(B_i^j)_{i=0}^{\lfloor n/3L_n
\rfloor- 1}$ are independent. Therefore we may use Lemma 5.2 from
\cite{DS}. Its proof is standard, so we omit it.
\begin{lemma}\label{lem:DS}
Let $c>0$. There exist $\alpha>0$ and $\beta<1$ depending on $c$ with
the following property. If $Y_i$ are independent $0/1$ random variables
(not necessarily identically distributed) with $\P(Y_i=1)>c$ for all
$i$, then for all $n$,
\[
\P\Biggl( \sum_{i=1}^n Y_i < \alpha n \Biggr) < \beta^n.
\]
\end{lemma}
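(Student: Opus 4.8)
\textbf{Proof proposal for Lemma~\ref{lem:DS}.}

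The plan is to prove this purely from the exponential Markov inequality (Chernoff bound), treating the $Y_i$ as a fixed but arbitrary collection of independent $0/1$ variables with success probability at least $c$. First I would observe that the claim is monotone in the success probabilities in the following sense: if $Y_i'$ is a $0/1$ variable with $\P(Y_i'=1) = c$ and $Y_i' \leq Y_i$ in the usual stochastic order, then $\sum_i Y_i'$ is stochastically dominated by $\sum_i Y_i$, so it suffices to prove the bound when each $Y_i$ is Bernoulli with parameter exactly $c$ — but even without invoking coupling one can simply keep the parameters general and carry $c$ as a lower bound through the estimates below.

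The main step is the Chernoff argument. Fix $t>0$, to be chosen. Since the $Y_i$ are independent, for any $\alpha>0$
\[
\P\left(\sum_{i=1}^n Y_i < \alpha n\right) = \P\left(e^{-t\sum_{i=1}^n Y_i} > e^{-t\alpha n}\right) \leq e^{t\alpha n} \prod_{i=1}^n \E e^{-tY_i}\ .
\]
Now $\E e^{-tY_i} = 1 - p_i(1-e^{-t}) \leq 1 - c(1-e^{-t}) \leq \exp\left(-c(1-e^{-t})\right)$, using $p_i \geq c$ and $1+x \leq e^x$. Hence the right-hand side is at most $\exp\left(n\left[t\alpha - c(1-e^{-t})\right]\right)$. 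It remains to choose $t$ and $\alpha$ so that the bracket $t\alpha - c(1-e^{-t})$ is strictly negative; calling its value $\log\beta$ with $\beta<1$ then gives the lemma. A clean choice is $t=1$, which makes the bracket equal to $\alpha - c(1-e^{-1})$; then taking $\alpha = \tfrac{1}{2} c(1-e^{-1}) > 0$ yields bracket value $-\tfrac12 c(1-e^{-1}) < 0$, so $\beta := \exp\left(-\tfrac12 c(1-e^{-1})\right) \in (0,1)$ works, and both $\alpha$ and $\beta$ depend only on $c$ as required.

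There is essentially no obstacle here — the only mild subtlety is making sure the constants are extracted in the right order (one fixes $t$ and $\alpha$ first as explicit functions of $c$, then reads off $\beta$) and that the bound $\P(\sum Y_i < \alpha n) < \beta^n$ is strict, which is automatic since $\exp(n\log\beta) = \beta^n$ and the Markov step gives a genuine inequality. One could equally optimize over $t$ to get the best exponent (a Cramér-type rate function), but since the lemma only asserts existence of \emph{some} $\alpha>0$, $\beta<1$, the crude choice $t=1$ suffices and keeps the proof short. This is exactly why the authors call the proof "standard" and omit it.
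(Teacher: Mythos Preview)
Your proof is correct and is exactly the kind of standard Chernoff argument the authors have in mind; indeed, the paper does not supply a proof at all but simply writes ``Its proof is standard, so we omit it,'' so there is nothing to compare against beyond confirming that your approach is the expected one. One small remark: your justification for the \emph{strict} inequality (``the Markov step gives a genuine inequality'') is not quite the right reason, since Markov's inequality only yields $\leq$; strictness actually comes either from the hypothesis $\P(Y_i=1)>c$ (which makes $\E e^{-tY_i}<1-c(1-e^{-t})$ strictly) or from $1-x<e^{-x}$ for $x>0$, or, most simply, by replacing your $\beta$ with any $\beta'\in(\beta,1)$ after the fact.
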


In view of this lemma and (\ref{eq:ank}), there exist $\alpha>0$ and
$\beta<1$ such that for any $n$ and $1 \leq j \leq L_n$,
\[
\P\Biggl( \sum_{i=0}^{\lfloor n/3L_n \rfloor- 1} I[B_i^j] < \frac
{\alpha n}{3L_n} \Biggr) < \beta^{n/3L_n}.
\]
Therefore,
\begin{eqnarray*}
&& \P\Biggl(\sum_{j=1}^{L_n} \sum_{i=0}^{\lfloor n/3L_n \rfloor-1}
I[B_i^j] < \alpha n/3 \Biggr) \\
&&\qquad\leq \P\Biggl(\sum_{i=0}^{\lfloor n/3L_n \rfloor- 1} I[B_i^j] <
\alpha n/3L_n \mbox{ for some } j \in[1,L_n]\Biggr) \\
&&\qquad\leq L_n \beta
^{n/3L_n},
\end{eqnarray*}
which converges to 0 as $n \to\infty$. This proves (\ref{eq: largedev}).

Define $\tilde A_{n,k}$ the same way as we defined $A_{n,k}$ except
that in item 7, the $q_k(1)$-open path\vspace*{1pt} connects $e_3$ to infinity. (See
Figure \ref{fig:VarOn} for an illustration of the event $\tilde
A_{n,k}$.) Note that if $\tilde A_{n,k}$ occurs, then $e_1$ and $e_3$
%
%
\begin{figure}

\includegraphics{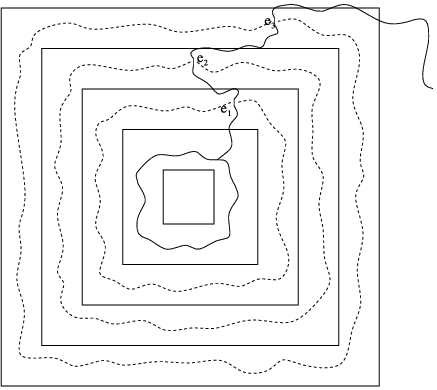}

\caption{The event $\tilde A_{n,k}$. The boxes, in order from
smallest to largest, are $B(2^{k+i})$ for $i=0,\ldots, 4$. The dotted
paths are $q_k(5)$-closed, the path from $e_3$ to infinity is
$q_k(1)$-open and all the other paths are $p_c$-open. The weight of
$e_1$ is in $[q_k(4),q_k(5)]$, the weight of $e_3$ is in
$[q_k(2),q_k(3)]$ and the weight of $e_2$ is in $[q_k(1),q_k(2)]\cup
[q_k(3),q_k(4)]$. The edges $e_1$ and~$e_3$ are outlets. The edge $e_2$
is an outlet if and only if its weight is in $[q_k(3),q_k(4)]$.}
\label{fig:VarOn}
\vspace*{-3pt}
\end{figure}
are outlets, and~$e_2$ is an outlet if and only if its weight is in
$[q_k(3),q_k(4)]$. If $A_{n,k}$ occurs but $\tilde A_{n,k}$ does not,
then there exists a $q_k(1)$-closed dual circuit around the origin with
radius at least $2^{k+L_n}$. By (\ref{eqExpDecay}), there exist
constants $C_4<\infty$ and $C_5>0$ such that for all $n,k$, $\P
(A_{n,k} \setminus\tilde A_{n,k}) \leq C_4\exp(-C_5 2^{k+L_n}/2^k)$, so
\[
\P(A_{n,k} \setminus\tilde A_{n,k} \mbox{ occurs for some } k \in
[1, n]) \to0.
\]
Therefore we may find $C_6>0$ such that for all $n$,
\[
\P(\tilde A_{n,3k} \mbox{ occurs for at least } C_3 n \mbox{ values
of } k \in[1,n/3]) > C_6.
\]

Call $A$ the above event whose probability is bounded below by $C_6$.
On the event~$A$, we define the vector $\vec{f} = (f_1, \ldots,
f_{\lfloor C_3n \rfloor})$ whose entries are the first $\lfloor C_3n
\rfloor$ edges $e$ (ordered from distance to the origin) such that
there exist edges $\bar e_1$ and\vadjust{\goodbreak} $\bar e_3$ such that $\bar e_1$, $e$
and $\bar e_3$ satisfy the properties of $e_1, e_2$ and~$e_3$,
respectively, in the definition of $A_{n,3k}$ for some $k \in[1,
n/3]$.\vspace*{-2pt} Write~$O_{\vec f}(n)$ for the number of outlets that appear in
the vector $\vec f$, and write $U_{\vec f}$\vspace*{-2pt} for the number of outlets
in $B(2^n)$ that do not appear in $\vec f$. At least one of $\{U_{\vec
f} +(C_2n)/2 \geq a(n) \}$ or $\{U_{\vec f} + (C_2n)/2 \leq a(n) \}$
has probability at least $C_6/3$. Let us assume that it is the first
event; if it is the other then the subsequent argument can be easily
modified. Write $B = \{U_{\vec f} + (C_2n)/2 \geq a(n)\}$. Since
$U_{\vec f}$ is defined only on $A$, we have $B \subset A$.

Associated to each $f_k$ in $\vec f$ in the definition of $A_{n,k}$ are
two intervals $I_k(1)=[q_k(1),q_k(2)]$ and $I_k(2)=[q_k(3),q_k(4)]$.
Let\vspace*{2pt} $\eta(\vec f)$ be the configuration of weights outside of $\vec
f$. If $\vec f$ and $\eta(\vec f)$ are fixed, then the variable
$U_{\vec f}$\vspace*{2pt} is a~constant function of the weights $\tau_{f_k}$. Also,
when these variables are fixed, $O_{\vec f}(n)$ is equal to the number
of values of $k \in[1,\lfloor C_3 n \rfloor]$ such that $\tau_{f_k}
\in I_k(2)$. Since the lengths of $I_k(1)$ and $I_k(2)$ are equal, the
distribution of $O_{\vec f}(n)$ conditioned on $\vec f$, $\eta(\vec
f)$ and $B$ is Binomial$(\lfloor C_3n \rfloor, 1/2)$. If $Y$ is an
independent variable with this distribution, then
\begin{eqnarray*}
\P\bigl( |{\mathbf O}(n) - a_n| \geq\sqrt n \bigr) &\geq& \E\bigl[ \P\bigl(
O_{\vec f} (n) \geq(C_3n)/2 + \sqrt n \mid B,\vec f, \eta(\vec
f)\bigr)\bigr] \P(B) \\
&\geq& (C_6/3) \P\bigl(Y \geq(C_3n)/2 + \sqrt n\bigr),
\end{eqnarray*}
which is bounded below uniformly in $n$. This completes the
proof.\vadjust{\goodbreak}


\subsection{\texorpdfstring{Proof of Theorem \protect\ref{thm:cltokthm}}{Proof of Theorem 2}}

\mbox{}

\begin{pf*}{Proof of the CLT}
We will apply Theorem 2.1 of \cite{Withers}. To state that theorem, we
need to introduce the notion of $l$-mixing. For $k\geq0$, $n \geq1$
and $u \in\R$, set
%
%
\begin{equation}\label{eq: ln}
l_n(k,u) = \max_{1\leq j \leq n-k} \sup|\E[e^{iuP} e^{-iuF}] - \E
e^{iuP} \E e^{-iuF}|,
\end{equation}
where
\[
P = b(n)^{-1} \sum_{l=1}^j \delta_l X_l,\qquad F= b(n)^{-1} \sum
_{l=j+k}^n \delta_l X_l,
\]
and the supremum in (\ref{eq: ln}) is over all $\{\delta_l = 0 \mbox
{ or } 1\}$. Now for $k \geq0$ and $u \in\R$, set
\[
l(k,u) = \sup_{n \geq1} l_n(k,u).
\]
The sequence $(X_k)$ is called $l$-\textit{mixing} if for all real $u$,
$l(k,u) \to0$ as $k \to\infty$.
\begin{remark}
As mentioned in the discussion below Definition 2.2 in \cite{Withers},
the inequality
%
%
\begin{equation}\label{eq: domination}
l(k,u)\leq16\alpha(k)
\end{equation}
from page 307 in \cite{IL} holds for all $k \geq0$ and $u\in\R$, so
since $\alpha(k) \to0$ as $k \to\infty$, the sequence $(X_k)$ of
outlet variables is $l$-mixing.
\end{remark}

For $k \geq0$, define
\[
\tilde c(k) = \sup_{j \geq1} | \E X_jX_{j+k} |.
\]
The following is Theorem 2.1 of \cite{Withers}.
\begin{lemma}\label{lem: withers}
The following conditions are sufficient for
\[
\frac{\sum_{k=1}^nX_k}{b(n)} \Rightarrow N(0,1).
\]
For some $\varepsilon>0$ and $\gamma\geq0$,
%
%
\begin{equation}\label{eq: condition1}
\sup_{a \geq1} \E\Biggl| \sum_{k=a}^{a+b} X_k \Biggr|^{2 +
\varepsilon} = O(b^{1+\varepsilon/2+\gamma}) \qquad\mbox{as } b \to
\infty;
\end{equation}
the sequence $(X_k)$ is $l$-mixing and for all real $u$,
%
%
\begin{equation}\label{eq: condition2}
l(k,u) = o(k^{-\theta}) \qquad\mbox{as } k \to\infty, \mbox{ where }
\theta= 2\gamma/\varepsilon,
\end{equation}
and
%
%
\begin{equation}\label{eq: condition34}
b(n) \to\infty\qquad\mbox{as } n \to\infty\quad\mbox{and}\quad \sum
_{j=0}^\infty\tilde c(j) < \infty.
\end{equation}
\end{lemma}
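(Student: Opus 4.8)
The plan is to reprove Theorem~2.1 of \cite{Withers} by the classical Bernstein big-block/small-block method rather than merely cite it. Writing $S_n = \sum_{k=1}^n X_k$, it suffices to prove the pointwise convergence of characteristic functions $\E\exp(iuS_n/b(n)) \to \exp(-u^2/2)$ for every real $u$. First I would fix integer sequences $p = p(n) \to \infty$ and $q = q(n) \to \infty$ with $q = o(p)$ and $p = o(n)$, and partition $\{1,\dots,n\}$ into alternating big blocks $H_1,\dots,H_r$ of length $p$ and small blocks $I_1,\dots,I_r$ of length $q$, with $r \sim n/(p+q)$ and a negligible edge remainder. Set $\xi_j = b(n)^{-1}\sum_{k\in H_j}X_k$ and $\eta_j = b(n)^{-1}\sum_{k\in I_j}X_k$, so that $S_n/b(n) = \sum_j \xi_j + \sum_j \eta_j + (\text{remainder})$.

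The argument then proceeds in three steps. (i) \emph{Negligibility of the small blocks.} Using $\sum_{l\geq 0}\tilde c(l) < \infty$ from \eqref{eq: condition34}, each $\mathrm{Var}(\sum_{k\in I_j}X_k) \leq Cq$, and the cross-covariances between distinct small blocks (separated by intervening big blocks of length $\geq p$) are summably controlled by the $\tilde c(l)$, so $\E(\sum_j\eta_j)^2 \leq Crq/b(n)^2$, and likewise for the remainder; the same estimates yield the variance near-additivity $\sum_j\mathrm{Var}(\xi_j) \to 1$. (ii) \emph{Decoupling the big blocks.} By the definition \eqref{eq: ln} of $l_n$, the partial sum $\sum_{j<m}\xi_j$ and the block $\xi_m$ are functions of weights at index sets separated by the gap $q$, so a telescoping estimate gives $|\E\exp(iu\sum_j\xi_j) - \prod_j \E\exp(iu\xi_j)| \leq r\,l(q,u)$, which tends to $0$ once $q$ grows fast enough relative to $r$ — this is exactly where $l$-mixing (via \eqref{eq: domination}) and \eqref{eq: condition2} are used. (iii) \emph{Lyapunov CLT for independentized blocks.} Introduce independent $\hat\xi_j \stackrel{d}{=} \xi_j$; then $\sum_j\mathrm{Var}(\hat\xi_j)\to 1$ by (i), and by \eqref{eq: condition1} with $b = p$, $\sum_j\E|\hat\xi_j|^{2+\varepsilon} \leq C\,b(n)^{-(2+\varepsilon)}\,r\,p^{1+\varepsilon/2+\gamma}$, which I want to drive to $0$; Lyapunov's theorem then gives $\sum_j\hat\xi_j \Rightarrow N(0,1)$, i.e. $\prod_j\E\exp(iu\xi_j)\to\exp(-u^2/2)$. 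Combining (i)--(iii) yields $\E\exp(iuS_n/b(n))\to\exp(-u^2/2)$.

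The hard part is choosing the rates $p(n)$ and $q(n)$ so that all three error terms vanish simultaneously: $q$ must be large enough that $r\,l(q,u)\to 0$ (controlled by \eqref{eq: condition2} with exponent $\theta = 2\gamma/\varepsilon$ and $r\asymp n/p$), yet $q = o(p)$ so the small-block variance $rq/b(n)^2$ and the variance cross-terms are negligible, while $p$ must be small enough that the Lyapunov ratio $b(n)^{-(2+\varepsilon)}r\,p^{1+\varepsilon/2+\gamma}$ tends to $0$ (controlled by \eqref{eq: condition1}); making these three requirements compatible is where the precise interplay of $\varepsilon$, $\gamma$, and $\theta$ is essential. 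A subsidiary obstacle I would need to dispatch is a quantitative lower bound on the growth of $b(n)$ beyond the bare hypothesis $b(n)\to\infty$: the negligibility and Lyapunov estimates both require $b(n)^2$ to grow roughly like $n$, which in the present paper is furnished by Theorem~\ref{thm:VarOn} ($b(n)^2 \asymp n$) and in the general setting of \cite{Withers} must be extracted from \eqref{eq: condition1} and \eqref{eq: condition34} together.
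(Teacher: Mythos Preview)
The paper does not prove this lemma: it is quoted verbatim as Theorem~2.1 of \cite{Withers} and used as a black box, with the paragraph following it devoted only to checking the hypotheses \eqref{eq: condition1}--\eqref{eq: condition34} via Corollaries~\ref{cor:mixing}, \ref{cor:cov} and \ref{cor:momentmixing}. Your proposal goes further and sketches an actual proof by the Bernstein big-block/small-block scheme, which is indeed the method behind \cite{Withers}; your three-step outline (small-block negligibility from \eqref{eq: condition34}, decoupling of big blocks via $l$-mixing and \eqref{eq: condition2}, Lyapunov CLT for the independentised big blocks via \eqref{eq: condition1}) assigns the hypotheses to the right places and is correct in spirit.

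The obstacle you flag at the end is genuine, and it is worth being precise about what it costs you. Several of your estimates---the small-block variance bound $rq/b(n)^2\to 0$, the variance near-additivity $\sum_j\mathrm{Var}(\xi_j)\to 1$ (which needs the cross-block covariance contribution, of order $o(n)$, to be $o(b(n)^2)$), and the Lyapunov ratio---all require a lower bound of the form $b(n)^2\geq cn$, not merely $b(n)\to\infty$. For the purposes of the present paper this is harmless, since Theorem~\ref{thm:VarOn} supplies $b(n)^2\asymp n$ externally, so your plan does yield a complete proof of \eqref{eq:CLT}. As a self-contained reproof of the lemma under only its stated hypotheses, however, you would still need to close this gap; \cite{Withers} does handle the general case, but not by first extracting a linear lower bound on $b(n)^2$.
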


To prove the CLT, we simply need to verify the conditions of Lemma \ref
{lem: withers}. Condition (\ref{eq: condition1}) holds with
$\varepsilon= 2$ and $\gamma=0$ by the first part of Corollary~\ref
{cor:momentmixing}, using $t=4$. Using (\ref{eq: domination}) and
Corollary \ref{cor:mixing}, we see that condition (\ref{eq:
condition2}) holds. Also, the first part of Corollary \ref
{cor:momentmixing} with $t=2$ shows the first part of condition~(\ref
{eq: condition34}). Finally, to verify the second part of~(\ref{eq:
condition34}), we appeal to Corollary~\ref{cor:cov} using $f=X_m$ and
$g=X_{m+k}$ (for fixed $m \geq1$ and $k \geq0$), with $p=2$ and
$q=4$. It follows that
\[
\tilde c(k) \leq C_1 \alpha(k)^{1/4}
\]
for some $C_1 <\infty$. In view of Corollary \ref{cor:mixing}, this
proves the second part of (\ref{eq: condition34}) and completes the
proof of the CLT.
\end{pf*}
\begin{pf*}{Proof of the SLLN}
For $i \geq1$, take $n_i = 2^i$. The second statement of
Corollary \ref{cor:momentmixing} implies that for any $\varepsilon> 0$,
\begin{eqnarray*}
\P\Bigl( {\max_{n_i \leq j \leq n_{i+1}}} | {\mathbf O}(j) - a(j) |
\geq\varepsilon n_i^r \Bigr) &\leq& \frac{C}{\varepsilon^2}
\biggl( \frac{n_{i+1}}{ n_i^{2r}} + \frac{\sqrt{n_{i+1}}}{n_i} \biggr) \\
&\leq& C_1\biggl( \frac{1}{(2^{2r-1})^i} + \frac{1}{(2^{r-1/2})^i}
\biggr).
\end{eqnarray*}
Since $r > 1/2$, this probability is summable in $i$. Since the
function $n^r$ is monotone, it follows that
\[
\sum_{i=1}^\infty\P\biggl( \max_{n_i \leq j \leq n_{i+1}} \frac
{|{\mathbf O}(j)-a(j)|}{j^r} \geq\varepsilon\biggr) < \infty.
\]
The Borel--Cantelli lemma finishes the proof.
\end{pf*}

\section{Further results for invasion percolation}\label{sec:further}

We begin with a lemma.
\begin{lemma}\label{lem:LD}
There exist constants $C<\infty$ and $\alpha>0$ such that for all $m,
n \geq1$,
\[
\P\bigl({\mathbf O}(n,n+m) \leq\alpha m\bigr) \leq C\exp(-m^\alpha),
\]
where ${\mathbf O}(n,n+m)$ is the number of outlets in $\operatorname{Ann}(2^n,2^{n+m})$.
\end{lemma}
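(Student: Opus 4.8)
The plan is to exhibit, inside the annulus $Ann(2^n,2^{n+m})$, roughly $m$ disjoint dyadic sub-annuli each of which independently has a reasonable chance of containing an outlet, and then apply a large-deviation bound for (almost) independent Bernoulli-type variables. Concretely, for $j$ ranging over a positive fraction of $\{n+1,\dots,n+m\}$ consider the event $A_{n,3j}$ (or rather its modification $\tilde A_{n,3j}$) already constructed in the proof of Theorem~\ref{thm:VarOn}: this event forces the existence of an outlet in $Ann(2^{3j},2^{3j+4})$ and has probability bounded below by a constant $C_2>0$ uniformly in the parameters, and the events for different $j$ are $L$-dependent (in fact $4$-dependent if one spaces the indices by more than $4$, which only costs a constant factor in the density). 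Using \eqref{eqExpDecay} one passes from $A$ to $\tilde A$ as in Theorem~\ref{thm:VarOn}, losing only an exponentially small probability, so that each occurrence of $\tilde A_{n,3j}$ genuinely produces an outlet of ${\cal S}$ in the annulus.

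The second step is to break the (say) $4$-dependence by partitioning the relevant indices $j$ into $O(1)$ classes within each of which the events are genuinely independent, and then to apply Lemma~\ref{lem:DS}: for independent $0/1$ variables $Y_i$ with $\P(Y_i=1)>c$ one has $\P(\sum_{i=1}^N Y_i < \alpha N) < \beta^N$ for suitable $\alpha>0$, $\beta<1$. Here $N\asymp m$, so the probability that fewer than $\alpha' m$ of the sub-annuli carry an outlet is at most $C\beta^{c'm}$, which is certainly $\le C\exp(-m^\alpha)$ for small $\alpha$. Since distinct sub-annuli contribute distinct outlets, ${\mathbf O}(n,n+m)\ge \alpha' m$ on the complementary event, giving the claim (after renaming constants so that the same $\alpha$ appears in $\alpha m$ and in $\exp(-m^\alpha)$, which is harmless since we may shrink $\alpha$).

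One technical point worth care: the events $\tilde A_{n,3j}$ as stated in Theorem~\ref{thm:VarOn} involve the scale $L_n = \log n$ and a connection to $\partial B(2^{k+L_n})$, whereas here there is no $n$-dependence and we want a connection all the way to infinity inside the annulus — but this is exactly the ``$\tilde A$'' version (connection to infinity), and the error in replacing the finite connection by the infinite one was already controlled by \eqref{eqExpDecay}; alternatively one can just use the simpler one-scale event $A_k$ from the proof of the lower bound on $a_k$ in Theorem~\ref{thm:VarOn}, which forces $O_k\ge 1$, has probability $>C_1$, and is $O(1)$-dependent in $k$. Using that $A_k$ directly is cleaner: take $k$ over a positive fraction of $\{n+1,\dots,n+m\}$ spaced out to gain independence, apply Lemma~\ref{lem:DS}, and conclude. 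The main obstacle is therefore not conceptual but bookkeeping: making sure the dependence range is a fixed constant (independent of $n,m$) so that Lemma~\ref{lem:DS} applies with $N$ proportional to $m$ rather than to $m/\log$, and that the ``good'' event one conditions on is phrased so that each success contributes a genuinely new outlet lying in $Ann(2^n,2^{n+m})$.
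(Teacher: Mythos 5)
Your plan captures the right high-level shape (many sub-annuli, each carrying an outlet with positive probability, Lemma~\ref{lem:DS} to control the number of failures), and it is in fact close to the paper's argument. But there is a genuine gap in the independence claim, and it is conceptual rather than bookkeeping. The events you propose to use are \emph{not} $O(1)$-dependent. The event $A_k$ from the lower bound on $a_k$ demands in item (c) that the $p_{2^k}$-open circuit be connected by a $p_{2^k}$-open path \emph{to infinity}; the event $\tilde A_{n,3j}$ likewise has a connection of $e_3$ to infinity in item 7; and the event $A_{n,3j}$ involves a connection out to $\partial B(2^{k+\log n})$, so its dependence range is $\log n$, which is the wrong parameter (it can dwarf $m$). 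None of these lets you "space the indices by more than $4$" and get independent blocks. If you try to repair this by truncating the connection at a \emph{constant} scale $2^{k+c}$, you gain $O(1)$-dependence, but you lose the implication "success $\Rightarrow$ outlet": you then need to rule out a $p_{2^k}$-closed dual circuit beyond radius $2^{k+c}$, and by \eqref{eqExpDecay} the probability of such a circuit is only a constant (not small), and summed over $\asymp m$ scales it is of order $m$, which is useless.

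The paper resolves exactly this tension by choosing the truncation scale to be $2^{i+\log m}$, i.e.\ the event $K_{i,m}$ demands a connection only to $\partial B(2^{i+\log m})$. This makes the $K_{i,m}$ events $\log m$-dependent, so one partitions the indices into $\log m$ residue classes and applies Lemma~\ref{lem:DS} to blocks of size $\asymp m/\log m$; this yields $\log m \cdot \beta^{m/\log m}\le C\exp(-m^\alpha)$, which is weaker than $\exp(-cm)$ but exactly the bound stated. Separately, the "leakage" event $G_{i,m}^c$ (existence of a $p_{2^i}$-closed dual circuit of radius $>2^{i+\log m}$) has probability $\le C\exp(-Cm)$ by \eqref{eqExpDecay}, so a union bound over the $m$ scales is still exponentially small, and $K_{i,m}\cap G_{i,m}$ does force $O_{i+1}\ge 1$. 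So the $\log m$ in the truncation scale is essential, not a nuisance you can remove; it is precisely why the lemma's bound reads $\exp(-m^\alpha)$ rather than geometric decay in $m$. Your writeup should (i) replace "connection to infinity" by "connection to $\partial B(2^{i+\log m})$" in the defining event, (ii) add the separate union bound over the complements $G_{i,m}^c$ using \eqref{eqExpDecay}, and (iii) accept the $m/\log m$ block size in Lemma~\ref{lem:DS}, which is what produces the stretched exponential.
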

\begin{pf}
The proof of the lower bound in Theorem 1.4 of \cite{DS} shows the
case $n=1$. For general $n$ the proof is similar. For $i,m \geq1$, let
$G_{i,m}$ be the event that there is no $p_{2^i}$-closed dual circuit
around the origin with radius larger than $2^{i+\log m}$, and let
$K_{i,m}$ be the event that (a) there exists a~$p_{2^i}$-closed dual
circuit ${\mathcal C}$ around the
origin in $\operatorname{Ann}(2^i,2^{i+1})^*$, (b) there exists a~$p_c$-open circuit
${\mathcal C}'$ around the origin
in $\operatorname{Ann}(2^i,2^{i+1})$ and (c) the circuit~${\mathcal C}'$ is connected
to $\partial B(2^{i+\log m})$ by a $p_{2^i}$-open path. By the RSW
theorem and~(\ref{eqNguyen}), there exists $C_1>0$ such that for all
$i \geq0$ and $m \geq1$,
\[
\P(K_{i,m}) \geq C_1.
\]

Now let\vspace*{-1pt} $j$ be an integer between $1$ and $\log m$, and define the
event $K_{i,m}^j = K_{n+j+i\lfloor\log m\rfloor, m}$.
Note that\vspace*{1pt} for fixed $j$, the events $(K_{i,m}^j)_{i=0}^{\lfloor m/\log
m\rfloor-1}$ are independent. Therefore we can apply Lemma \ref
{lem:DS} to deduce that there exist $\alpha_0>0$ and $\beta_0<1$ such
that for any $m,n$ and $1 \leq j \leq\log m$,
\[
\P\Biggl( \sum_{i=0}^{\lfloor m / \log m \rfloor- 1} I[K_{i,m}^j] <
\frac{\alpha_0 m}{\lfloor\log m\rfloor} \Biggr) < \beta_0^{m/\log
m}.
\]
Therefore,
\begin{eqnarray*}
&& \P\Biggl( \sum_{j=1}^{\lfloor\log m\rfloor} \sum
_{i=0}^{\lfloor m / \log m \rfloor-1} I[K_{i,m}^j] < \alpha_0 m
\Biggr) \\
&&\qquad\leq \P\Biggl( \sum_{i=0}^{\lfloor m/ \log m\rfloor-1}
I[K_{i,m}^j] < \alpha_0 m /\lfloor\log m\rfloor\mbox{ for some } j
\in[1,\log m] \Biggr) \\
&&\qquad\leq \log m \beta_0^{m/\log m} \leq C_2 \exp(-m^{\alpha})
\end{eqnarray*}
for some $C_2<\infty$ and $\alpha>0$. By (\ref{eqExpDecay}), we also
have the estimate
\[
\sum_{i=0}^m \P(G_{n+i,m}^c) \leq C_3\sum_{i=0}^m \exp(-C_4m) \leq
C_3\exp(-C_5m).
\]
Since the event $K_{i,m} \cap G_{i,m}$ implies $O_{i+1} \geq1$, we can
combine the above estimates to deduce
\[
\P\bigl({\mathbf O}(n,n+m) \leq\alpha_0m\bigr) \leq C_2\exp(-m^{\alpha}) +
C_3\exp(-C_5m),
\]
which implies the lemma.
\end{pf}

Recall the definitions of $Q_n$ and $T_n$ from Section \ref
{subsec:mainresults}. Since $a(n)\asymp n$, $T_n$ is comparable with $n$.
%
%
\begin{pf*}{Proof of Theorem \ref{thm:inverseCLT}}
It follows from the definition of $Q_n$, the CLT for ${\mathbf O}(n)$
and the fact that for any $x$, $\sigma_{n+x\sqrt n}/\sigma_n \to1$
as $n\to\infty$ that
\[
{\mathbb P}(Q_n < T_{n+ x\sigma_n}) \to\Phi(x),
\]
where $\Phi$ is the standard normal cumulative distribution function.
Recall that the $a_i$'s\vadjust{\goodbreak} [$a_i = a(i)-a(i-1)$] are uniformly bounded away
from 0 and~$\infty$ by Theorem~\ref{thm:VarOn}. Therefore,
\[
{\mathbb P}(Q_n < T_{n+ x\sigma_n}) =
{\mathbb P}\bigl(a(Q_n) < a(T_{n+ x\sigma_n})\bigr)=
{\mathbb P}\bigl(a(Q_n) < n+ x\sigma_n +r_n\bigr),
\]
where $r_n$ is uniformly bounded in $n$. It remains to prove the second
part of the proposition.
The first statement implies that for any $M>0$,
\[
{\mathbb E}\min\biggl\{\biggl(\frac{a(Q_n) - n}{\sigma_n}
\biggr)^2,M\biggr\} \to{\mathbb E}\min\{Z^2,M\},
\]
where $Z$ is a standard normal random variable.
Therefore, it suffices to show that for any $\varepsilon>0$ there
exists $C_1>0$ such that
\[
\limsup_{n\to\infty}{\mathbb E}\biggl(\frac{a(Q_n) - n}{\sigma
_n}\biggr)^2I\bigl(|a(Q_n) - n| > C_1\sigma_n\bigr) < \varepsilon.
\]
%
This will follow if we show that there exists $C_2$ such that for all $n$,
\[
{\mathbb E}\biggl(\frac{a(Q_n) - n}{\sigma_n}\biggr)^4 < C_2.
\]
In other words, we need to show that ${\mathbb E}(a(Q_n) - n)^4 = O(n^2)$.
Since the~$a_i$'s are uniformly bounded away from 0 and $\infty$, it
suffices to show that ${\mathbb E}(Q_n - T_n)^4 = O(n^2)$.
For $c>0$, consider
\[
A_n = \bigl\{{\mathbf O}(n,n+k)>ck, {\mathbf O}(n-k,n)>ck \mbox{ for
all } k\geq\sqrt n\bigr\}.
\]
It follows from Lemma \ref{lem:LD} that there exists $c>0$ such that
\[
{\mathbb P}(A_n^c) \leq C_3 \exp(-n^c).
\]
We write
\[
{\mathbb E}(Q_n-T_n)^4 = {\mathbb E}(Q_n-T_n)^4I(Q_n \leq C_4n) +
{\mathbb E}(Q_n-T_n)^4I(Q_n > C_4n).
\]
If $C_4$ is large enough, ${\mathbb E}(Q_n-T_n)^4I(Q_n > C_4n) =
o(n^2)$ (One can write $I(Q_n > C_4n)$ as $\sum_{k=1}^\infty I(Q_n \in
(kC_4n,(k+1)C_4n])$ and use Lemma \ref{lem:LD}).
We now bound the first expectation.
\begin{eqnarray*}
&& {\mathbb E}(Q_n-T_n)^4I(Q_n \leq C_4n) \\
&&\qquad\leq (C_4n)^4{\mathbb P}(A_{T_n}^c) + T_n^2 + {\mathbb
E}(Q_n-T_n)^4I\bigl(|Q_n-T_n|>\sqrt{T_n},A_{T_n}\bigr).
\end{eqnarray*}
The first two summands are bounded by $C_5 n^2$. It remains to bound
the last summand
\begin{eqnarray*}
&& {\mathbb E}(Q_n-T_n)^4I\bigl(Q_n-T_n>\sqrt{T_n},A_{T_n}\bigr)\\
&&\qquad\leq{\mathbb E}(Q_n-T_n)^4I\bigl(Q_n>T_n, {\mathbf
O}(T_n,Q_n-1)>c(Q_n-1-T_n)\bigr)\\
&&\qquad\leq\frac{8}{c^4}{\mathbb E}\bigl({\mathbf O}(Q_n-1) - {\mathbf
O}(T_n)\bigr)^4I(Q_n>T_n) + 8\\
&&\qquad\leq\frac{8}{c^4}{\mathbb E}\bigl({\mathbf O}(T_n)-n\bigr)^4 + 8\\
&&\qquad\leq C_6n^2,
\end{eqnarray*}
where the last inequality follows from Corollary \ref
{cor:momentmixing}. Similarly, one can show that ${\mathbb
E}(Q_n-T_n)^4I(Q_n-T_n<-\sqrt{T_n},A_{T_n}) \leq C_7 n^2$.
\end{pf*}
\begin{pf*}{Proof of Corollary \ref{cor:inversevar}}
It follows from Theorem \ref{thm:VarOn} that $a(T_n) = n + O(1)$,
$\sigma_n \asymp\sqrt n$ and $|a(m)-a(n)| \asymp|m-n|$ independently
of $m,n$. Therefore,
the first statement of Corollary \ref{cor:inversevar} follows directly
from Theorem \ref{thm:inverseCLT}. The upper bound in the second
statement follows immediately from the upper bound in the first
statement. For the lower bound, we may apply the CLT for $(a(Q_n))$ to
deduce that there exists $C>0$ such that for all $n$,
\[
\P\bigl(Q_n \geq T_n + \sqrt n\bigr) > C \quad\mbox{and}\quad \P\bigl(Q_n \leq T_n - \sqrt
n\bigr) > C.
\]
The lower bound follows from these two estimates. Indeed, if $\E Q_n
\geq T_n$, then $Q_n \leq T_n - \sqrt n$ implies that $Q_n \leq\E Q_n
- \sqrt n$ and so
\[
\E(Q_n - \E Q_n)^2 \geq n \P\bigl(Q_n \leq T_n - \sqrt n\bigr) > C n.
\]
If $\E Q_n \leq T_n$, then the argument is similar.
\end{pf*}
\begin{pf*}{Proof of Corollary \ref{cor:radii}}
The proofs of both statements are similar so we only show the proof of
the first. We first prove the lower bound.
The CLT for $(a(Q_n))$ implies that there exists $C_1$ such ${\mathbb
P}(Q_n > T_n + \sqrt n) > C_1$.
It is obvious that $\hat R_n \geq2^{Q_n-1}$. Therefore, ${\mathbb
P}(\hat R_n \geq2^{T_n + \sqrt n -1}) > C_1$, which implies that
${\mathbb E}(\log\hat R_n - T_n)^2 \geq(\sqrt n - 1)^2 C_1$.

We now prove the upper bound.
We first observe that by Theorem \ref{outletmomentthm}, using $t=4$,
\begin{eqnarray}\label{eq: rhatlowerbound}
\P\bigl(\hat R_n < 2^{\sqrt n}\bigr) &\leq&\P\bigl(Q_n \leq\sqrt n\bigr) \leq\P
\bigl({\mathbf O}\bigl(\sqrt n\bigr) \geq n\bigr) \nonumber\\[-8pt]\\[-8pt]
&\leq&\E\mathbf O\bigl(\sqrt n\bigr)^4/n^4 =
O(n^{-2}).\nonumber
\end{eqnarray}
Therefore, ${\mathbb E}(\log\hat R_n - T_n)^2I(\hat R_n < 2^{\sqrt n})
= o(n)$. We next rule out the case when $\hat R_n > 2^{C_2n}$ for large
enough $C_2$.
\begin{eqnarray*}
{\mathbb E}(\log\hat R_n - T_n)^2I(\hat R_n > 2^{C_2n})
&\leq& {\mathbb E}(\log\hat R_n)^2I(\hat R_n > 2^{C_2n}) \\
&\leq& \sum_{k=1}^\infty\bigl(C_2n(k+1)\bigr)^2{\mathbb P}(\hat R_n >
2^{C_2nk}).
\end{eqnarray*}
Note that ${\mathbb P}(\hat R_n > 2^{C_2nk})$ is bounded above by
\begin{eqnarray*}
&&{\mathbb P}\bigl(\mbox{there is no } p_c\mbox{-open circuit around the
origin in } \operatorname{Ann}\bigl(2^{C_2nk-\sqrt{C_2nk}},2^{C_2nk}\bigr)\bigr)
\\
&&\qquad{} +  {\mathbb P}\bigl(Q_n > C_2nk - \sqrt{C_2nk}\bigr).
\end{eqnarray*}
Using the RSW theorem and Lemma \ref{lem:LD} if $C_2$ is large enough,
this gives the bound
%
%
\begin{equation}\label{eq: devforR}
{\mathbb P}(\hat R_n > 2^{C_2nk}) \leq C_3\exp(-(nk)^{C_4}).
\end{equation}
Therefore,
\[
{\mathbb E}(\log\hat R_n - T_n)^2I(\hat R_n > 2^{C_2n}) = o(n).
\]
Let $\tilde A_n$ be the event that there exists a $p_c$-open
circuit around the origin in $\operatorname{Ann}(2^{k-\sqrt k},2^k)$ for all $k\geq
\sqrt n$.
It follows from the RSW theorem that
%
%
\begin{equation}\label{eq: widetildeA}
{\mathbb P}(\tilde A_n^c) \leq C_5\exp(-n^{C_6}).
\end{equation}
Therefore,
\[
{\mathbb E}(\log\hat R_n - T_n)^2I(\hat R_n \leq2^{C_2n},\tilde
A_n^c) = o(n).
\]
Moreover, if $\hat R_n > 2^{\sqrt n}$ and $\tilde A_n$ occurs, then
$Q_n \geq\log\hat R_n - \sqrt{\log\hat R_n} - 1$.
Hence
\begin{eqnarray*}
&& {\mathbb E}(\log\hat R_n - T_n)^2I\bigl(2^{\sqrt n} \leq\hat R_n \leq
2^{C_2n},\tilde A_n\bigr) \\
&&\qquad\leq 2{\mathbb E}(Q_n-T_n)^2 + 2{\mathbb E}(\log\hat R_n -
Q_n)^2I\bigl(Q_n > \log\hat R_n - \sqrt{C_2n} - 1\bigr)\\
&&\qquad\leq C_7 n.
\end{eqnarray*}
The last inequality follows from Corollary \ref{cor:inversevar} and
from the fact that $Q_n \leq\log\hat R_n +1$.
The upper bound is proved.
\end{pf*}
\begin{pf*}{Proof of Corollary \ref{cor:weights}}
For any $n$, let
\[
f(n) = \max\{\tau_e\dvtx e \mbox{ is an outlet in } B(n)^c\}
\]
and $g(n) = \min\{\tau_e\dvtx e$ is an outlet in $B(n)\}$ if there
is an outlet in $B(n)$ and $g(n)=0$ otherwise.
\begin{lemma}
There exists $C<\infty$ such that for any $t\geq1$ and $n\geq1$,
\[
{\mathbb E}\biggl(\biggl|\log\frac{f(n)-p_c}{p_n-p_c}\biggr|^t
\biggr)\leq(Ct)^{Ct}   \quad\mbox{and}\quad
{\mathbb E}\biggl(\biggl|\log\frac{g(n)-p_c}{p_n-p_c}\biggr|^t
\biggr)\leq(Ct)^{Ct}.
\]
\end{lemma}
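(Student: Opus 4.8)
The plan is to deduce the two moment bounds from exponential tail estimates that are uniform in $n$: if $Z_n$ denotes $\log\frac{f(n)-p_c}{p_n-p_c}$ (resp.\ $\log\frac{g(n)-p_c}{p_n-p_c}$), then a bound $\P(|Z_n|>\lambda)\le C\exp(-c\lambda)$ valid for all $\lambda,n\ge 1$ yields $\E|Z_n|^t=\int_0^\infty t\lambda^{t-1}\P(|Z_n|>\lambda)\,d\lambda\le (Ct)^{Ct}$, which is the claim. So it suffices to estimate, for each of $f$ and $g$, the two one-sided tails; write $q=p_c+e^{-\lambda}(p_n-p_c)$ and $q'=p_c+e^{\lambda}(p_n-p_c)$, and observe that $q<p_n<q'$, that $L(q)\ge ne^{c\lambda}$ and $L(q')\le ne^{-c\lambda}$ for some $c>0$ (by \eqref{ineqP} and the definition of $p_m$), and that the events $\{f(n)>q'\}$, $\{g(n)>q'\}$ are empty once $\lambda\gtrsim\log n$ (since $f(n),g(n)\le 1$ while $p_n-p_c\to 0$). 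The degenerate event $\{g(n)=0\}$ (no outlet in $B(n)$) has probability $\le Cn^{-c_0}$ by a routine RSW estimate, and on it $|\log|\tfrac{g(n)-p_c}{p_n-p_c}||\le C\log n$ by \eqref{ineqP}, so it contributes a bounded amount to every moment and will be ignored below.

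Two of the four tails --- $f(n)$ too large and $g(n)$ too small --- come from forcing an unlikely correlation-length event. If an outlet $\hat e$ of weight $p$ lies outside $B(n)$, then the invaded region just before $\hat e$ is added is a finite set of radius $>n$ containing the origin, all of whose boundary edges have weight $\ge p$ (since $\hat e$ minimizes the weight on that boundary), and is therefore surrounded by a dual circuit all of whose edges have weight $\ge p$. Taking $p>q'$, this is a $q'$-closed dual circuit around the origin of radius $>n$, i.e.\ the event $B_{n,q'}$ of \eqref{eqExpDecay} occurs, so $\P(f(n)>q')\le D_3\exp(-D_4 n/L(q'))\le D_3\exp(-c\,e^{c\lambda})$. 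For $g$: if an outlet $\hat e$ of weight $\hat\tau<q$ lies inside $B(n)$, then every edge invaded after $\hat e$ has weight $<\hat\tau<q$ and is hence $q$-open; since the invasion reaches infinity, an endpoint of $\hat e$ --- a vertex of $B(n)$ --- is joined to $\infty$ by $q$-open edges, so $B(n)\stackrel{q}{\longleftrightarrow}\partial B(L(q))$ (using $L(q)>n$). This forces a $q$-open crossing of each of the $\gtrsim\lambda$ disjoint dyadic annuli between $B(n)$ and $B(L(q))$; each of these lies below the correlation length of $q$, hence independently contains a blocking $q$-closed dual circuit with probability $\ge\delta>0$ by RSW, and so $\P(g(n)<q)\le(1-\delta)^{\lfloor\log(L(q)/n)\rfloor}\le Ce^{-c\lambda}$.

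The remaining two tails --- $f(n)$ too small and $g(n)$ too large --- are dual to one another and are handled by exhibiting, with probability $\ge 1-Ce^{-c\lambda}$, an outlet of prescribed weight at a prescribed scale; the mechanism is exactly the one used for the events $A_{n,k},\widetilde A_{n,k}$ in the proof of Theorem~\ref{thm:VarOn}. For a dyadic scale $2^a$, let $E_a$ be the event that there is a genuine outlet $e\in Ann(2^a,2^{a+1})$ whose weight lies in a fixed interval $[\,p_c+c_1(p_{2^a}-p_c),\,p_c+c_2(p_{2^a}-p_c)\,]$ --- that is, $e$ is reached by the invasion along $p_c$-open paths, is trapped by a dual circuit closed at a level just above $\tau_e$, and its far endpoint is joined to $\infty$ by a path open at a level strictly between $p_c$ and $\tau_e$. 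By RSW and \cite[Lemma~6.3]{DSV} one has $\P(E_a)\ge c_0>0$, uniformly in $a$ and $n$. For the lower tail of $f$ we let $a$ range over the $\gtrsim\lambda$ dyadic scales with $n<2^a\lesssim L(q)$: on $E_a$ the outlet lies outside $B(n)$ and, since $p_{2^a}-p_c\gtrsim p_{L(q)}-p_c\asymp q-p_c$ by \eqref{ineqP}, after a suitable choice of $c_1,c_2$ and of the scale window its weight exceeds $p_c+e^{-\lambda}(p_n-p_c)$, so $\{f(n)-p_c<e^{-\lambda}(p_n-p_c)\}\subseteq\bigcap_a E_a^c$. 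For the upper tail of $g$ we let $a$ range over the $\gtrsim\lambda$ dyadic scales with $L(q')\lesssim 2^a<n$: on $E_a$ the outlet lies inside $B(n)$ and has weight $\le p_c+c_2(p_{2^a}-p_c)\le q'$, so $\{g(n)>q'\}\subseteq\bigcap_a E_a^c$. In both cases, after the standard splitting into residue classes (as in the proof of Theorem~\ref{thm:VarOn}, via Theorem~\ref{thm:renewal}) that makes the events at well-separated scales independent, Lemma~\ref{lem:DS} gives $\P(\bigcap_a E_a^c)\le Ce^{-c\lambda}$, which finishes the estimate.

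The bulk of the work, and the main obstacle, is the construction of the events $E_a$: one must produce an event of probability bounded below, uniformly in the scale $a$ and in $n$, on which a \emph{genuine} outlet with weight in a \emph{prescribed} window appears at scale $2^a$. This is precisely the delicate multi-level ``sandwich'' construction --- nested $p_c$-open and suitably closed circuits at the five levels $q(1),\dots,q(5)$, together with a connection to infinity at a level between $p_c$ and the outlet weight --- carried out for $\widetilde A_{n,k}$ in the proof of Theorem~\ref{thm:VarOn}, resting on \cite{DSV}; the bookkeeping needed to make the $E_a$ at well-separated scales (nearly) independent, so that Lemma~\ref{lem:DS} applies, is of the same kind. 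Everything else --- the reduction to tail bounds, the circuit estimate via \eqref{eqExpDecay} and \eqref{ineqP}, and the RSW blocking argument --- is routine.
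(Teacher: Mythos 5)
Your high-level plan --- reduce both moments to uniform-in-$n$ exponential tail bounds in $\lambda$ for the four one-sided tails and then integrate --- is exactly the paper's, and your treatment of two of the four tails coincides with the paper's: $\{f(n)\geq p\}\subseteq B_{n,p}$ together with \eqref{eqExpDecay}, and $\{0<g(n)<p\}\subseteq\{B(n)\stackrel{p}{\leftrightarrow}\infty\}$ together with an RSW blocking estimate. On the remaining two tails you diverge, and in both cases you do considerably more work than the paper.

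For the lower tail of $f$, the paper uses the same one-line inclusion it used for the $g$-lower tail: if every outlet outside $B(n)$ has weight less than $p$, then after the invasion's final exit from $B(n)$ all subsequently invaded edges are $p$-open, so $\P(f(n)<p)\leq\P(B(n)\stackrel{p}{\leftrightarrow}\infty)\leq C_1(n/L(p))^{C_2}$. That single inequality replaces your entire construction of the sandwich events $E_a$ across $\gtrsim\lambda$ scales, the residue-class splitting via Theorem~\ref{thm:renewal}, and the application of Lemma~\ref{lem:DS} --- precisely what you flag as ``the bulk of the work''. Your route is correct in outline, but you are rebuilding heavy machinery where a direct reduction is available. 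For the upper tail of $g$, the paper again avoids re-running the sandwich construction by reusing Lemma~\ref{lem:LD}, which is the packaged form of exactly the estimate you are re-deriving: it decomposes $\{g(n)\geq p\}$ according to the smallest $m$ for which $Ann(\lfloor n/2^m\rfloor,n)$ contains an outlet, notes that on that slice one gets a $p$-closed dual circuit of radius $\gtrsim n/2^m$ (from the boundary of the pond preceding a high-weight outlet deep inside $B(n)$) together with the rare event $A_{m,n}^c$, and closes with Cauchy--Schwarz, \eqref{eqExpDecay}, and Lemma~\ref{lem:LD}. Your construction of outlets at $\gtrsim\lambda$ prescribed scales with controlled weights is essentially the proof of Lemma~\ref{lem:LD} itself, so it is valid but duplicative. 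One small point in your favor: you treat the degenerate event $\{g(n)=0\}$ explicitly, which is in fact needed --- the paper's displayed inequality $\P(g(n)<p)\leq\P(B(n)\stackrel{p}{\leftrightarrow}\infty)$ cannot hold as written when $g(n)=0$ is possible (take $p\leq p_c$: the left side is $\P(g(n)=0)>0$ while the right side vanishes), and the dyadic partition of $\{g(n)\in[q_{k+1},q_k)\}$ never covers $g(n)=0$, so a separate contribution of the kind you estimate is required there as well.
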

\begin{pf}
Using the RSW theorem and (\ref{eqExpDecay}), respectively, we see that
there exist constants $C_1<\infty$ and $C_2>0$ such that for any
$n\geq1$ and $p\in(0,1)$,
\[
{\mathbb P}\bigl(f(n)<p\bigr) \leq{\mathbb P}\bigl(B(n)\stackrel{p}\leftrightarrow
\infty\bigr)
\leq
C_1\biggl(\frac{n}{L(p)}\biggr)^{C_2}
\]
and
\[
{\mathbb P}\bigl(f(n)\geq p\bigr) \leq{\mathbb P}(B_{n,p}) \leq
C_1\exp\biggl(-C_2\frac{n}{L(p)}\biggr),
\]
where $B_{n,p}$ is defined directly above (\ref{eqExpDecay}).
For $k\in{\mathbb Z}$, let $q_k = p_{2^kn}$
\begin{eqnarray*}
&& {\mathbb E}\biggl(\biggl|\log\frac{f(n)-p_c}{p_n-p_c}
\biggr|^t\biggr) \\
&&\qquad= \sum_k {\mathbb E}\biggl(\biggl|\log\frac
{f(n)-p_c}{p_n-p_c}\biggr|^t I\bigl(f(n) \in[q_{k+1},q_k)
\bigr)\biggr)\\
&&\qquad\leq \sum_{k\geq0}\biggl|\log\frac{q_{k+1}-p_c}{p_n-p_c}
\biggr|^t{\mathbb P}\bigl(f(n) < q_k\bigr)\\
&&\qquad\quad{} + \sum_{k<0}\biggl|\log\frac{q_{k}-p_c}{p_n-p_c}\biggr|^t{\mathbb
P}\bigl(f(n) \geq q_{k+1}\bigr).
\end{eqnarray*}
The first result of the lemma follows from (\ref{ineqP}) and the above
estimates.
It remains to prove the second statement.
Note that for any $n\geq1$ and $p\in(0,1)$,
\[
{\mathbb P}\bigl(g(n)<p\bigr) \leq{\mathbb P}\bigl(B(n)\stackrel{p}\leftrightarrow
\infty\bigr)
\leq
C_1\biggl(\frac{n}{L(p)}\biggr)^{C_2}.
\]
To bound ${\mathbb P}(g(n)\geq p)$, note that if $g(n)\geq p$, then
there is an outlet in~$B(n)$.
For $1 \leq m \leq\lfloor\log n\rfloor+1$, consider the event
$A_{m,n}$ that $\operatorname{Ann}(\lfloor n/2^m\rfloor, n)$ contains an outlet. (For
the case $m= \lfloor\log n\rfloor+1$, we use the convention that
$\operatorname{Ann}(\lfloor n/2^m \rfloor, n) = B(n)$.) Note that for $m=0$,
$A_{m,n}$ is equal to the null event and that for fixed $n$, the events
$A_{m,n}$ are increasing in $m$. By Lemma~\ref{lem:LD}, there exists
$C_3<\infty$ and $C_4>0$ such that for all $m,n$,
\[
{\mathbb P}(A_{m,n}^c)\leq C_3\exp(-m^{C_4}).
\]
%

Using this estimate, we get
\begin{eqnarray*}
{\mathbb P}\bigl(g(n) \geq p\bigr) &=& \sum_{m=0}^{\lfloor\log n\rfloor
}{\mathbb P}\bigl(g(n)\geq p, A_{m,n}^c, A_{m+1,n}\bigr) \\
&\leq& \sum_{m=0}^{\lfloor\log n\rfloor}{\mathbb P}\bigl(B_{\lfloor
n/2^{m+1}\rfloor,p}, A_{m,n}^c\bigr)\\
&\leq&\sum_{m=0}^{\lfloor\log n\rfloor}{\mathbb P}\bigl(B_{\lfloor
n/2^{m+1}\rfloor,p}\bigr)^{1/2}{\mathbb P}(A_{m,n}^c)^{1/2} \\
&\leq& C_5 \sum_{m=0}^{\lfloor\log n\rfloor}\biggl[\exp
\biggl(-C_2\frac{\lfloor n/2^{m+1}\rfloor}{L(p)}\biggr)\exp
(-m^{C_4})\biggr] ^{1/2}
\end{eqnarray*}
for some $C_5<\infty$. In particular, for $k<0$,
\[
{\mathbb P}\bigl(g(n) \geq q_{k}\bigr)
\leq
C_{6}\exp(-|k|^{C_7}).
\]
The remainder of the proof of the lemma is similar to the proof of the
first statement.
\end{pf}

We proceed with the proof of the corollary. We will only prove the
first statement; the proof of the second is similar. Inequality (\ref
{ineqP}) and Corollary~\ref{cor:inversevar} imply that
\[
{\mathbb E}\biggl(\log\frac{p_{2^{Q_n}}-p_c}{p_{2^{T_n}}-p_c}
\biggr)^2 \asymp{\mathbb E}(Q_n-T_n)^2 \asymp n.
\]
Note that
\[
g(2^{Q_n}) \leq\hat\tau_n \leq f(2^{Q_n-1}).
\]
Therefore the corollary will follow if we show that there exists $C_8$
such that for all~$n$,
\[
{\mathbb E}\biggl(\log\frac{g(2^{Q_n})-p_c}{p_{2^{Q_n}}-p_c}
\biggr)^2 \leq C_8\sqrt{n}   \quad\mbox{and}\quad
{\mathbb E}\biggl(\log\frac{f(2^{Q_n-1})-p_c}{p_{2^{Q_n}}-p_c}
\biggr)^2 \leq C_8\sqrt{n}.
\]
Let $D_n$ be the event that (a) there exists a $p_c$-open circuit in
the annulus $\operatorname{Ann}(2^{n-n^{1/4}},2^{n-1})$,
(b) this circuit\vspace*{1pt} is connected to infinity by a
$p_{2^{n-2n^{1/4}}}$-open path
and (c) there exists a $p_{2^{n+n^{1/4}}}$-closed dual circuit around
$B(2^n)^*$.
The RSW theorem and (\ref{eqExpDecay}) imply that there exist
constants $C_9$ and $C_{10}$ such that for all $n$,
%
%
\begin{equation}\label{eq: dnbound}
{\mathbb P}(D_n^c) \leq C_9e^{-n^{C_{10}}}.
\end{equation}
Recall that for all $n$,
\[
{\mathbb E}\biggl(\log\frac{g(2^n)-p_c}{p_{2^{n}}-p_c}\biggr)^4 \leq
C_{11}  \quad\mbox{and}\quad
{\mathbb E}\biggl(\log\frac{f(2^{n-1})-p_c}{p_{2^{n}}-p_c}\biggr)^4
\leq C_{11}.
\]
Therefore,
\[
{\mathbb E}\biggl(\log\frac{g(2^{Q_n})-p_c}{p_{2^{Q_n}}-p_c}
\biggr)^2I(D_{Q_n}^c)
\leq
\sum_{k=1}^\infty{\mathbb E}\biggl(\log\frac
{g(2^k)-p_c}{p_{2^k}-p_c}\biggr)^2I(D_k^c)
\leq C_{12},
\]
where $D_{Q_n}$ is the event $\bigcup_k ( D_k \cap\{Q_n=k\}
)$. Similarly,
\[
{\mathbb E}\biggl(\log\frac{f(2^{Q_n-1})-p_c}{p_{2^{Q_n}}-p_c}
\biggr)^2I(D_{Q_n}^c) \leq C_{12}.
\]
On the other hand, if $D_n$ occurs and, moreover, there is an outlet in
the annulus $\operatorname{Ann}(2^{n-1},2^n)$, then
\[
g(2^n) \mbox{ and } f(2^{n-1}) \mbox{ are both in }
[p_{2^{n+n^{1/4}}},p_{2^{n-2n^{1/4}}}].
\]
This observation and inequality (\ref{ineqP}) imply [note that
$\operatorname{Ann}(2^{Q_n-1},2^{Q_n})$ always contains an outlet]
\[
{\mathbb E}\biggl(\log\frac{g(2^{Q_n})-p_c}{p_{2^{Q_n}}-p_c}
\biggr)^2I(D_{Q_n})
\leq C_{13} {\mathbb E}Q_n^{1/2}
\leq C_{14} \sqrt{n}
\]
and, similarly,
\[
{\mathbb E}\biggl(\log\frac{f(2^{Q_n-1})-p_c}{p_{2^{Q_n}}-p_c}
\biggr)^2I(D_{Q_n})
\leq C_{14} \sqrt{n}.
\]
This completes the proof of the corollary.
\end{pf*}
\begin{pf*}{Proof of Corollary \ref{cor:SLLN2}}
We start with the proof of the first statement.
Take $r >1/2$. The SLLN for outlets gives that $({\mathbf
O}(n)-a(n))/n^r \to0$ a.s. as $n \to\infty$.
Theorem 1.4 in \cite{DS} states that there are constants $C_{1}>0$ and
$C_{2}<\infty$ such that with probability $1$, for all large $n$,
\[
C_1n < {\mathbf O}(n) < C_2n.
\]
This implies that there exist constants $C_{3}>0$ and $C_{4}<\infty$
such that with probability $1$, for all large $n$,
\[
C_{3}n < Q_n < C_{4}n.
\]
Therefore,
\[
\frac{{\mathbf O}(Q_n)-a(Q_n)}{n^r} \to0 \qquad\mbox{a.s. as } n \to
\infty.
\]
Because $a(Q_n)-n \asymp Q_n-T_n$, the first statement of the corollary
will follow if we show that $({\mathbf O}(Q_n)-n)/n^r \to0$ a.s.
Note that $n\leq\mathbf O(Q_n) \leq n + O_{Q_n}$ by the definition of $Q_n$.
Since\vspace*{1pt} there exists a finite constant $C_{5}$ such that (a)~$Q_n <
C_{4}n$ a.s. for all large $n$ and
(b) $\P(O_i > n^{r/2} $ for some $i=1, \ldots, C_{4}n) \leq
C_{5}/n^2$ (this second statement is a consequence of Theorem \ref
{outletmomentthm}), it follows that, a.s. for all large~$n$, $O_{Q_n}
\leq n^{r/2}$. The desired convergence follows.

The second and third statements follow easily from the first and from
estimates developed in the proofs of Corollaries \ref{cor:radii}
and \ref{cor:weights}.
Indeed, since $(Q_n-T_n)/n^r \to0$ a.s., the statements about $\log
\hat R_n$ and $\hat\tau_n$ will follow if we show that
%
%
\begin{equation}\label{eq: lasteq3}
\frac{\log\hat R_n - Q_n}{n^r} \to0  \quad\mbox{and}\quad
\frac{1}{n^r}\log\frac{\hat\tau_n - p_c}{p_{2^{Q_n}} - p_c} \to
0  \qquad\mbox{a.s.}
\end{equation}
It follows from the proof of Corollary \ref{cor:radii} and the
Borel--Cantelli lemma that there exists $C_{6}<\infty$ such that,
a.s., for all large $n$,
%
%
\begin{equation}\label{eq: lasteq1}
\log\hat R_n - \sqrt{C_{6}n} - 1 \leq Q_n \leq\log\hat R_n + 1.
\end{equation}
To see this, note first that by (\ref{eq: rhatlowerbound}), with
probability one, $\log\hat R_n \geq\sqrt n$ for all large~$n$. Next,
let $\tilde A_n$ be the event that for all $k \geq\sqrt n$, there
is a $p_c$-open circuit around the origin in the annulus
$\operatorname{Ann}(2^{k-\sqrt k},2^k)$. By (\ref{eq: widetildeA}), with probability
one the events $(\tilde A_n)$ occur for all large $n$. Last, by
(\ref{eq: devforR}) (setting $k=1$ there), there exists $C_{6}<\infty
$ such that with probability one, for all large~$n$, $\log\hat R_n
\leq C_{6}n$. Since $\tilde A_n \cap\{\sqrt n < \log\hat R_n <
C_{6}n\}$ implies (\ref{eq: lasteq1}), it in fact occurs a.s. for all
large $n$. This implies the desired SLLN for $(\log\hat R_n)$.

Similarly, one may use arguments from the proof of Corollary \ref
{cor:weights} and the Borel--Cantelli lemma to show that a.s., for all
large $n$,
%
%
\begin{equation}\label{eq: lasteq2}
p_{2^{Q_n+(C_{4}n)^{1/4}}} \leq\hat\tau_n \leq p_{2^{Q_n -
2(C_{4}n)^{1/4}}}.
\end{equation}
To prove this, define $D_n$ as in the proof of that corollary: it is
the event that (a) there exists a $p_c$-open circuit in
$\operatorname{Ann}(2^{n-n^{1/4}},2^{n-1})$, (b) this circuit is connected to infinity
by a $p_{2^{n-2n^{1/4}}}$-open path and (c) there exists a
$p_{2^{n+n^{1/4}}}$-closed dual circuit around $B(2^n)^*$. By (\ref
{eq: dnbound}), a.s. $D_n$ occurs for all large $n$. The fact that if
$D_n$ occurs and there is an outlet in $\operatorname{Ann}(2^{n-1},2^n)$,
then~$g(2^n)$ and $f(2^{n-1})$ are in the interval\vspace*{1pt}
$[p_{2^{n+n^{1/4}}},p_{2^{n-2n^{1/4}}}]$, combined\vspace*{1pt} with the fact that
$\operatorname{Ann}(2^{Q_n-1},2^{Q_n})$ always contains an outlet, shows (\ref{eq:
lasteq2}) a.s. for all large~$n$. Along with (\ref{ineqP}), this
implies the second part of (\ref{eq: lasteq3}) and completes the proof
of Corollary \ref{cor:SLLN2}.
\end{pf*}

\begin{appendix}\label{app}
\section*{Appendix: Covariance estimates}

Here we give the proof of Corollary \ref{cor:cov}. The proof we
present is directly from~\cite{Davydov}. We begin with a lemma, which
is (17.2.2) from \cite{IL}.
\begin{lemma}\label{lem: covlem1}
Suppose that $f$ is $\Sigma^k$-measurable, and $g$ is $\Sigma
_{k+m}$-measurable, and there are constants $C_1,C_2<\infty$ such that
$|f|\leq C_1$ and $|g|\leq C_2$ a.s. Then
\setcounter{equation}{0}
\begin{equation}\label{eq: coveq1}
| \E[ fg ] - \E f \E g | \leq4C_1C_2 \alpha
(m),
\end{equation}
where $\alpha(m)$ was defined in (\ref{eq: mixingdef}).
\end{lemma}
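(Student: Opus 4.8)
The plan is to reduce the statement to the case where $f$ and $g$ are indicator functions, for which the inequality is immediate from the definition \eqref{eq: mixingdef} of $\alpha(m)$; the two reduction steps are exactly where the constant $4=2\cdot 2$ comes from. By rescaling, it suffices to treat the case $C_1=C_2=1$. Throughout I would write $\mathrm{Cov}(X,Y)=\E[XY]-\E X\,\E Y$.

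First I would peel off $f$ on the $\Sigma^k$ side. Since $f$ is $\Sigma^k$-measurable,
\[
\mathrm{Cov}(f,g)=\E\big[f\,(g-\E g)\big]=\E\big[f\,\E(g-\E g\mid\Sigma^k)\big],
\]
so $|\mathrm{Cov}(f,g)|\le \E\big|\E(g\mid\Sigma^k)-\E g\big|$ because $|f|\le 1$. Let $\eta=\operatorname{sgn}\!\big(\E(g\mid\Sigma^k)-\E g\big)$, which is $\Sigma^k$-measurable and takes values in $\{-1,0,1\}$, so $\eta=I(A_+)-I(A_-)$ with disjoint $A_\pm\in\Sigma^k$. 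Since $\eta$ is $\Sigma^k$-measurable, $\E[\eta\,\E(g\mid\Sigma^k)]=\E[\eta g]$, hence
\[
\E\big|\E(g\mid\Sigma^k)-\E g\big|=\E\big[\eta\,(\E(g\mid\Sigma^k)-\E g)\big]=\mathrm{Cov}(\eta,g),
\]
and therefore $|\mathrm{Cov}(f,g)|\le |\mathrm{Cov}(I(A_+),g)|+|\mathrm{Cov}(I(A_-),g)|\le 2\sup_{A\in\Sigma^k}|\mathrm{Cov}(I(A),g)|$.

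Next I would run the same argument on the $\Sigma_{k+m}$ side, with the roles of the two variables swapped. For fixed $A\in\Sigma^k$, using that $g$ is $\Sigma_{k+m}$-measurable,
\[
\mathrm{Cov}(I(A),g)=\E\big[g\,(I(A)-\P(A))\big]=\E\big[g\,\E(I(A)-\P(A)\mid\Sigma_{k+m})\big],
\]
so $|\mathrm{Cov}(I(A),g)|\le \E\big|\E(I(A)\mid\Sigma_{k+m})-\P(A)\big|$; introducing $\zeta=\operatorname{sgn}\!\big(\E(I(A)\mid\Sigma_{k+m})-\P(A)\big)=I(B_+)-I(B_-)$ with $B_\pm\in\Sigma_{k+m}$ and repeating the computation gives $|\mathrm{Cov}(I(A),g)|\le 2\sup_{B\in\Sigma_{k+m}}|\mathrm{Cov}(I(A),I(B))|$. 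Finally, for $A\in\Sigma^k$ and $B\in\Sigma_{k+m}$ one has $|\mathrm{Cov}(I(A),I(B))|=|\P(A\cap B)-\P(A)\P(B)|\le\alpha(m)$ directly from \eqref{eq: mixingdef}. Chaining the three bounds yields $|\mathrm{Cov}(f,g)|\le 4\alpha(m)$, which is \eqref{eq: coveq1} after undoing the rescaling.

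The argument is entirely soft, so there is no serious obstacle; the only point needing care is the bookkeeping of measurability — one must check that $\eta$ is genuinely $\Sigma^k$-measurable and $\zeta$ genuinely $\Sigma_{k+m}$-measurable so that the conditional-expectation identities are legitimate, and that each passage through a sign variable costs only the single factor $2$ coming from writing it as $I(A_+)-I(A_-)$. (A more careful treatment via simple functions recovers the sharper constants found in some references, but $4$ is all that is needed here.)
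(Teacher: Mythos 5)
Your proof is correct and follows essentially the same two-step reduction as the paper's: replace $f$ by a sign function measurable with respect to $\Sigma^k$, then replace $g$ (after the first reduction) by a sign function measurable with respect to $\Sigma_{k+m}$, and finally bound a covariance of indicators by $\alpha(m)$. The only cosmetic difference is the bookkeeping of the constant $4$: you split each sign variable as $I(A_+)-I(A_-)$ and collect a factor of $2$ at each stage, whereas the paper keeps the $\pm1$-valued sign variables $f_1,g_1$ and expands $\mathrm{Cov}(f_1,g_1)$ into the four terms $\mathrm{Cov}(I(A),I(B))$, $\mathrm{Cov}(I(A),I(B^c))$, etc., each bounded by $\alpha(m)$. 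Your version in fact sidesteps a small slip in the paper's text (where the second sign variable is written as $\mathrm{sgn}(\E[g-\E g\mid\Sigma_{k+m}])$ rather than the sign of the conditional expectation of the $\Sigma^k$-side object), since you correctly condition $I(A)-\P(A)$ on $\Sigma_{k+m}$.
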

\begin{pf}
We write the left-hand side of (\ref{eq: coveq1}) as
\[
\bigl|\E\bigl[ f \E[ g-\E g  \mid \Sigma^k] \bigr] \bigr| \leq
C_1 \E\bigl[ \bigl| \E[ g - \E g  \mid \Sigma^k] \bigr|\bigr] =
C_1 \E\bigl[ f_1 \E[g-\E g \mid \Sigma^k] \bigr],
\]
where $f_1 = $ sgn$(\E[g-\E g \mid \Sigma^k])$. Since $f_1$ is $\Sigma
^k$-measurable,
\[
| \E[ fg ] - \E f \E g | \leq C_1 | \E
[ f_1g ] - \E f_1 \E g |.
\]
Similarly comparing $g$ to $g_1 = $ sgn$(\E[ g - \E g  \mid \Sigma_{k+m} ])$,
\[
| \E[ fg ] - \E f \E g | \leq C_1C_2|
\E[ f_1g_1 ] - \E f_1 \E g_1 |.
\]
Define $A = \{f_1 = 1\}$ and $B=\{g_1=1\}$. Then the right-hand side of
the above inequality is bounded above by
\begin{eqnarray*}
& & C_1C_2  | \P(A,B) + \P(A^c,B^c) - \P(A^c,B) - \P(A,B^c) \\
&&\hphantom{ C_1C_2  |}{} - \P(A)\P(B) - \P(A^c)\P(B^c) + \P(A^c)\P(B) + \P(A)\P(B^c)|,
\end{eqnarray*}
which is bounded above by $4 C_1C_2\alpha(m)$.
\end{pf}

Now we will suppose that one function is bounded and the other is in
$L^p$ for $p>1$. The following is Lemma 2.1 from \cite{Davydov}.
\begin{lemma}\label{lem: covlem2}
Suppose that $f$ is $\Sigma^k$-measurable, and $g$ is $\Sigma
_{k+m}$-measurable and that there exists $C<\infty$ such that $|g|\leq
C$ a.s. Further, suppose that there is $p>1$ such that the moment $\E
|f|^p<\infty$ exists. Then
%
%
\begin{equation}\label{eq: coveq2}
| \E[fg] - \E f \E g | \leq6C [ \E|f|^p
]^{1/p} \alpha(m)^{1/q},
\end{equation}
where $1/p + 1/q = 1$.
\end{lemma}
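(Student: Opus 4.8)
The plan is to reduce to the bounded case already settled in Lemma~\ref{lem: covlem1} by truncating $f$. Fix a level $N>0$, to be optimized at the end, and split $f = f' + f''$ with $f' = f\,I(|f|\le N)$ and $f'' = f\,I(|f|>N)$. Both summands are Borel functions of $f$, hence $\Sigma^k$-measurable, and by bilinearity of the covariance,
\[
\E[fg] - \E f\,\E g = \big(\E[f'g] - \E f'\,\E g\big) + \big(\E[f''g] - \E f''\,\E g\big)\ .
\]
For the first term I would apply Lemma~\ref{lem: covlem1} with the bounds $|f'|\le N$ and $|g|\le C$, giving $|\E[f'g] - \E f'\,\E g| \le 4CN\,\alpha(m)$. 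For the second term I would use the crude estimate $|\E[f''g] - \E f''\,\E g| \le \E|f''g| + \E|f''|\,\E|g| \le 2C\,\E|f''|$, together with the observation that on $\{|f|>N\}$ one has $|f| = |f|^{p}|f|^{1-p}\le N^{1-p}|f|^{p}$, so that $\E|f''| \le N^{1-p}\,\E|f|^{p}$. Combining the two pieces yields
\[
\big|\E[fg] - \E f\,\E g\big| \le 4CN\,\alpha(m) + 2C N^{1-p}\,\E|f|^{p}\ .
\]

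It then remains to balance the two terms. Choosing $N = (\E|f|^{p})^{1/p}\,\alpha(m)^{-1/p}$ makes the first term equal to $4C(\E|f|^{p})^{1/p}\alpha(m)^{1-1/p}$ and the second equal to $2C(\E|f|^{p})^{1/p}\alpha(m)^{1-1/p}$; since $1-1/p = 1/q$, their sum is exactly $6C(\E|f|^{p})^{1/p}\alpha(m)^{1/q}$, which is the claimed inequality. I would then dispose of the degenerate cases separately: if $\E|f|^{p}=0$ then $f=0$ a.s. and both sides vanish; if $\alpha(m)=0$ then by Lemma~\ref{lem: covlem1} the first term above is $0$ for every $N$, and letting $N\to\infty$ sends $\E|f''|\to 0$ by dominated convergence, so the left side is $0$ as well.

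I do not expect a genuine obstacle here — this is a textbook truncation argument — but the one point to watch is the truncation bound $\E|f''|\le N^{1-p}\E|f|^{p}$, which is the sole place where the hypothesis $p>1$ is used; everything else is bilinearity of covariance and the already-proved bounded estimate. The only bookkeeping subtlety is that the constant $6$ in the statement is precisely $4+2$, arising after the optimizing choice of $N$, so the two contributions must be tracked with their exact constants rather than merged into a generic $O(\cdot)$.
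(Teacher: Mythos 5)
Your proof is correct and follows essentially the same route as the paper: truncate $f$ at level $N$, apply Lemma~\ref{lem: covlem1} to the bounded part, bound the tail contribution by $2CN^{1-p}\E|f|^p$ via the Markov-type estimate, and optimize over $N$. The handling of the degenerate cases $\E|f|^p=0$ and $\alpha(m)=0$ is a harmless addition the paper leaves implicit.
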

\begin{pf}
Let $N$ be a positive number to be chosen later and set $f_N = f I[|f|
< N]$. Applying the previous lemma to $f_N$ and $g$, we get
%
%
\begin{equation}\label{eq: coveq3}
| \E[f_Ng] - \E f_N \E g | \leq4 CN \alpha(m).
\end{equation}
To estimate the difference between this and the quantity in this lemma,
note that the left-hand side of (\ref{eq: coveq2}) is bounded above by
\[
| \E[f_Ng] - \E f_N \E g | + | \E[\tilde f_N g] -
\E\tilde f_N \E g |,
\]
where $\tilde f_N = f - f_N$. Since $|g| \leq C$, we find that the
second\vspace*{1pt} term is no bigger than $2C \E|\tilde f_N|$, and
\[
\E|\tilde f_N| = \E\bigl[ |f|^p |f|^{1-p} I[|f| \geq N] \bigr]
\leq N^{1-p} \E|f|^p.
\]
Combining this with (\ref{eq: coveq3}) gives
%
%
\begin{equation}\label{eq: coveq4}
| \E[fg] - \E f \E g | \leq4CN\alpha(m) + 2CN^{1-p} \E
|f|^p.
\end{equation}
Choosing $N = [ \E|f|^p ]^{1/p} \alpha(m)^{-1/p}$ yields
(\ref{eq: coveq2}).
\end{pf}

For the proof of Corollary \ref{cor:cov} we use a similar method to
the one given above. We let $C$ be a positive number to be chosen later
and set $g_C = g I[|g|<C]$. By Lemma \ref{lem: covlem2},
\[
| \E[f g_C] - \E f \E g_C | \leq6C [ \E|f|^p
]^{1/p} \alpha(m)^{1/p'},
\]
where $1/p + 1/p' = 1$. To estimate the difference, we write $\tilde
g_C = g-g_C$ and again see that
\[
| \E[fg] - \E f \E g | \leq| \E[fg_C] - \E f \E
g_C | + | \E[f \tilde g_C] - \E f \E\tilde g_C |.
\]
We bound the last term using H\"{o}lder's inequality by
\[
[\E|f|^p]^{1/p} [ \E|\tilde g_C - \E\tilde g_C
|^{p'} ]^{1/p'}
\]
and then use
\[
[ \E| \tilde g_C - \E\tilde g_C |^{p'}
]^{1/p'} \leq2 [\E|\tilde g_C|^{p'}]^{1/p'},
\]
which we can bound above by $2 ( C^{p'-q} \E|g|^q
)^{1/p'}$ as in (\ref{eq: coveq4}). Choosing $C=[ \E|g|^q
]^{1/q} \alpha(m)^{-1/q}$ and combining the estimates as before
completes the proof.
\end{appendix}

\section*{Acknowledgments}
We would like to thank A. Hammond for discussions related to the law of
large numbers. We also thank C. Newman and R.~van der Hofstad for
valuable comments and advice. Last we would like to acknowledge an
anonymous referee for many helpful suggestions to make the paper more
readable.


%

%
\printaddresses

\end{document}